\renewcommand{\paragraph}[1]{\noindent{}{\bf #1}}
\newcommand{\RR}{{\mathcal R}}
\newcommand{\NN}{\mathbb{N}}
\newcommand{\R}{\mathbb{R}}
\newcommand{\II}{\mathbb{I}}
\DeclareMathOperator{\im}{Im}
\newcommand{\ma}{{\ensuremath{\mathfrak{a}}}} 
\newcommand{\mb}{{\ensuremath{\mathfrak{b}}}}
\newcommand{\md}{{\ensuremath{\mathfrak{d}}}}
\newcommand{\ms}{{\ensuremath{\mathfrak{s}}}}
\newcommand{\bg}{{\ensuremath{\mathbf{g}}}} 
\newcommand{\bh}{{\ensuremath{\mathbf{h}}}}
\newcommand{\cB}{{\ensuremath{\mathcal{B}}}}
\newcommand{\cG}{{\ensuremath{\mathcal{G}}}}
\newcommand{\cS}{{\ensuremath{\mathcal{S}}}}
\newcommand{\cM}{{\ensuremath{\mathcal{M}}}}
\newcommand{\cQ}{{\ensuremath{\mathcal{Q}}}} 
\newcommand{\cR}{{\ensuremath{\mathcal{R}}}}
\newcommand{\cK}{{\ensuremath{\mathcal{K}}}}
\newcommand{\cE}{{\ensuremath{\mathcal{E}}}} 
\newcommand{\cF}{{\ensuremath{\mathcal{F}}}}
\newcommand{\cH}{{\ensuremath{\mathcal{H}}}} 
\newcommand{\cU}{{\ensuremath{\mathcal{U}}}}
\newcommand{\cV}{{\ensuremath{\mathcal{V}}}}  
\newcommand{\cY}{{\ensuremath{\mathcal{Y}}}}   
\newcommand{\cZ}{{\ensuremath{\mathcal{Z}}}}   
\newcommand{\spanset}{\mathrm{span}}
\newcommand{\img}{\mathrm{Im}}
\newcommand{\Syz}{\mathrm{Syz}}
\def\ub{\mathbf{u}}
\def\cross{\mathfrak{c}}
\newcommand{\syz}{\mathrm{Syz}}
\renewcommand\ge\geqslant
\renewcommand\geq\geqslant
\renewcommand\le\leqslant
\renewcommand\leq\leqslant
\newtheorem{theorem}[thm]{Theorem}
\newenvironment{proof}{\par\noindent{\em Proof.}}{\ \hfill $\Box$\\}
\newtheorem{remark}[thm]{Remark}
\newtheorem{df}[thm]{Definition}
\newtheorem{lm}[thm]{Lemma}
\newtheorem{condition}[thm]{Condition}
\begin{document}
\begin{frontmatter}
\title{$G^1$-smooth splines on quad meshes with 4-split macro-patch elements}

\author[inria]{Ahmed Blidia}\ead{Ahmed.Blidia@inria.fr}\ 
\author[inria]{Bernard Mourrain}\ead{Bernard.Mourrain@inria.fr}\ 
\author[ricam]{Nelly Villamizar}\ead{n.y.villamizar@swansea.ac.uk}

\address[inria]{UCA, Inria Sophia Antipolis M\'editerran\'ee, \textsc{aromath}, Sophia Antipolis, France}
\address[ricam]{Swansea University, Swansea, UK}

\begin{abstract}
  We analyze the space of differentiable functions on a quad-mesh
  $\cM$, which are composed of 4-split spline macro-patch elements on
  each quadrangular face.  We describe explicit transition maps across
  shared edges, that satisfy conditions which ensure that the space of
  differentiable functions is ample on a quad-mesh of arbitrary
  topology.  These transition maps define a finite dimensional vector
  space of $G^{1}$ spline functions of bi-degree $\le (k,k)$ on each
  quadrangular face of $\cM$.  We determine the dimension of this space
  of $G^{1}$ spline functions for $k$ big enough and provide explicit
  constructions of basis functions attached respectively to vertices,
  edges and faces.  This construction requires the analysis of the
  module of syzygies of univariate b-spline functions with b-spline
  function coefficients. New results on their generators and
  dimensions are provided.  Examples of bases of $G^{1}$ splines of
  small degree for simple topological surfaces are detailed and
  illustrated by parametric surface constructions.
\end{abstract}

\begin{keyword}
        geometrically continuous splines \sep 
        dimension and bases of spline spaces \sep 
        gluing data \sep 
        polygonal patches \sep 
        surfaces of arbitrary topology
\end{keyword}
\end{frontmatter}
 
\section{Introduction}

Quadrangular b-spline surfaces are ubiquitous in geometric modeling.
They are represented as tensor products of univariate b-spline
functions. Many of the properties of univariate b-splines extend
naturally to this tensor representation. They are well suited to describe
parts of shapes, with features organized along two different
directions, as this is often the case for manufactured objects.
However, the complete description of a shape by tensor product b-spline patches
may require to intersect and trim them, resulting in a geometric model,
which is inaccurate or difficult to manipulate or to deform.

To circumvent these difficulties, one can consider geometric models
composed of quadrangular patches, glued together in a smooth way along
their common boundary.  Continuity constraints on the tangent planes
(or on higher osculating spaces) are imposed along the share edges. In
this way, smooth surfaces can be generated from quadrilateral meshes
by gluing several simple parametric surfaces.  By specifying the
topology of a quad(rilateral) mesh $\cM$ and the geometric continuity
along the shared edges via transition maps, we obtain a (vector) space
of smooth b-spline functions on $\cM$.

Our objective is to analyze this set of smooth b-spline functions on a
quad mesh $\cM$ of arbitrary topology.  In particular, we want to
determine the dimension and a basis of the space of smooth functions
composed of tensor product b-spline functions of bounded degree.  By
determining bases of these spaces, we can represent all the smooth
parametric surfaces which satisfy the geometric continuity conditions
on $\cM$.  Any such surface is described by its control points in this
basis, which are the coefficients in the basis of the differentiable
functions used in the parametrization.

The construction of basis functions of a spline space has several
applications.  For visualization purposes, smooth deformations of
these models can be obtained simply by changing their
coefficients in the basis, while keeping satisfied the regularity constraints
along the edges of the quad mesh.  Fitting problems for constructing
smooth models that approximate point sets or satisfy geometric
constraints can be transformed into least square problems on the
coefficient vector of a parametric model and solved by standard
linear algebra tools.  Knowing a basis of the space of smooth spline
functions of bounded degree on a quad mesh can also be useful in
Isogeometric Analysis. In this methodology, the basis functions are
used to describe the geometry and to approximate the solutions of
partial differential equations on the geometry. The explicit knowledge
of a basis allows to apply Galerkin type methods, which project the
solution onto the space spanned by the basis functions.

In the last decades, several works have been focusing on the construction of $G^{1}$ surfaces,  including
\cite{catmull_recursively_1978}, 
\cite{peters_biquartic_1995},
\cite{loop_smooth_1994},
\cite{reif_biquadratic_1995},
\cite{prautzsch_freeform_1997},
\cite{ying_simple_2004},
\cite{gu_manifold_2006},
\cite{he_manifold_2006},
\cite{fan_smooth_2008},
\cite{hahmann_bicubic_2008},
\cite{peters_complexity_2010},
\cite{beccari_rags:_2014},
\cite{bonneau_flexible_2014}. 
Some of these constructions use tensor product b-spline elements.
In \cite{peters_biquartic_1995}, biquartic b-spline elements are used on
a quad mesh obtained by middle point refinement of a general mesh.
These elements involve 25 control coefficients.
In \cite{reif_biquadratic_1995},
biquadratic b-spline elements are used on a semi-regular quad mesh obtained by three levels of mid-point refinements. These correspond to 16-split macro-patches, which involve 81 control points.
In \cite{peters_patching_2000}, 
 bicubic b-spline elements with 3 nodes, corresponding to a 16-split of the parameter domain are used. 
The macro-patch elements involve 169 control coefficients.
In \cite{lin_adaptive_2007},
biquintic  polynomial elements are used for solving a fitting problem. They involve 36 control coefficients. Normal vectors are extracted from the data of the fitting problem to specify the $G^{1}$  constraints.
In \cite{shi_practical_2004}
biquintic 5-split b-spline elements are involved. They are represented by 100 control coefficients or more.
In \cite{fan_smooth_2008}, bicubic 9-split b-spline elements are involved. They are represented by 
100 control coefficients.
In \cite{peters_complexity_2010}, it is shown that bicubic $G^{1}$ splines with linear transition maps
requires at least a 9-split of the parameter domains.
In \cite{hahmann_bicubic_2008}, bicubic 4-split macro-patch elements are used.
They are represented by 36 control coefficients.  The construction does not apply for general quad meshes.
In \cite{bonneau_flexible_2014},  biquartic 4-split macro-elements are used. They involve 81 control coefficients. The construction applies for general quad meshes and is used to solve the interpolation problem of boundary curves.
In these constructions, symmetric geometric continuity constraints are used at the vertices of the mesh.

Much less work has been developed on the dimension analysis.
In \cite{mourrain_dimension_2016}, a dimension formula and explicit basis constructions are given for polynomial patches of degree $\ge 4$ over a mesh with triangular or quadrangular cells.
In \cite{bercovier_smooth_2014}, a similar result is obtained for the space of $G^{1}$ splines of bi-degree $\ge (4,4)$ for rectangular decompositions of planar domains.
The construction of basis functions for spaces of $C^1$ geometrically continuous functions restricted to two-patch domains, has been considered in \cite{Kapl_2015}. In \cite{Sangalli_2016}, the approximation properties of the aforementioned spaces are explored, including constructions over multi-patch geometries motivated by applications in isogeometric analysis.

In this paper we analyze the space of $G^{1}$ splines on a general
quad mesh $\cM$, with 4-split macro-patch elements of bounded
bi-degree.  We describe explicit transition maps across shared edges,
that satisfy conditions which ensure that the space of differentiable
functions is ample on the quad mesh $\cM$ of arbitrary topology.
These transition maps define a finite dimensional vector space of
$G^{1}$ b-spline functions of bi-degree $\le (k,k)$ on each
quadrangular face of $\cM$.  We determine the dimension of this space
for $k$ big enough and provide explicit constructions of basis
functions attached respectively to vertices, edges and faces.  This
construction requires the analysis of the module of syzygies of
univariate b-spline functions with b-spline coefficients. New results
on their generators and dimensions are provided.  This yields a new
construction of smooth splines on quad meshes of arbitrary topology,
with macro-patch elements of low degree.

Examples of bases of $G^{1}$ splines of small degree
for simple topological surfaces are detailed and illustrated by
parametric surface constructions.

The techniques developed in this paper for the study of geometrically
smooth splines rely on the analysis of the syzygies of the glueing
data, similarly to the approach used in \cite{mourrain_dimension_2016}
for polynomial patches.  However an important difference is that we
consider here syzygies of spline functions with spline
coefficients. The classical properties of syzygy modules over the ring
of polynomials used in \cite{mourrain_dimension_2016} do not apply 
to spline functions. New results on syzygy modules over the ring of
piecewise polynomial functions (with one node and prescribed
regularity) are presented in Sections \ref{sec:relation_syz},
\ref{sec:syz_basis}. In particular, Proposition \ref{demi} describes a
family of gluing data with additional degrees of freedom, providing
new possibilities to construct ample spaces of spline functions in
low degree.  The necessary notation and constrains from the case of
polynomial patches are used in the course of the paper.  But,
properties of Taylor maps exploited in \cite {mourrain_dimension_2016}
are incoherent in our context.  In our setting, the construction of
the spline space requires to extend the results on these Taylor maps
to the context of macro-patches.  Sections \ref{sec:sepvert},
\ref{sec:polybasis}, \ref{sec:basis_edge} present an alternative
analysis adapted to our needs. Exploiting these properties, vertex
basis functions and face basis functions can then be constructed in
the same way as in the polynomial case.


The paper is organized as follows. The next section recalls the notions of topological surface
$\cM$, differentiable functions on $\cM$  and smooth spline functions on $\cM$.
In Section 3,  we detail the constraints on the
transition maps to have an ample space of differentiable functions and provide
explicit constructions.
In Section 4, 5, 6,  we analyze the space of smooth spline functions around respectively an edge, a vertex and a face and describe basis functions attached to these elements. 
In Section 7, we give the dimension formula for the space of spline functions of bi-degree $\le (k,k)$ over a general quad mesh $\cM$ and describe a basis.  Finally, in Section
7, we give examples of such smooth spline spaces.
\section{Definitions and basic properties}
In this section, we define and describe the objects we need to analyze the spline spaces on a quad mesh.
\subsection{Topological surface}
\begin{df}
A \textbf{topological surface} $\cM$ is given by
\begin{itemize}
\item a collection $\cM_2$ of polygons (also called faces of $\cM$) in the plane that are pairwise disjoint,

\item a collection of homeomorphisms $\phi _{i,j}: \tau_i\mapsto \tau_j$ between polygonal edges from different polygons $\sigma _i$ and $\sigma _j$ of $\cM_{2}$,
\end{itemize}
where a polygonal edge can be glued with at most one other polygonal edge, and it cannot be glued with itself.  The shared edges (resp. the points of the shared edges) are identified with their image by the corresponding homeomorphism. The collection of edges (resp. vertices) is denoted $\cM_{1}$ (resp. $\cM_{0}$).
\end{df}

For a vertex $\gamma\in \cM_{0}$, we denote by $\cM_{\gamma}$ the submesh of $\cM$ composed of the faces which are adjacent to $\gamma$.
For an edge $\tau \in \cM_{1}$, we denote by $\cM_{\tau}$ the submesh of $\cM$ composed of the faces which are adjacent to the interior of $\tau$.
\begin{df}[Gluing data]
For a topological surface $\mathcal{M}$, a gluing structure associated to 
	$\mathcal{M}$ consists of the following:
	\begin{itemize}
	\item for each edge $\tau\in \mathcal{M}_{1}$ of a cell $\sigma$, an open set 
		$U_{\tau,\sigma}$ of $\mathbb{R}^2$  containing $\tau$;
 	\item for each edge $\tau\in\mathcal{M}_{1}$ shared by two polygons
 		${\sigma_i},\sigma_j\in \mathcal{M}_{2}$, a C$\,^{1}$-diffeomorphism 
 		called the {\em transition map} 
 		$\phi_{\sigma_j,\sigma_i}\colon U_{\tau,\sigma_i}\rightarrow U_{\tau,\sigma_j}$ 
 		between the open sets $U_{\tau,\sigma_i}$ and $U_{\tau,\sigma_j}$, 
 		and also its 	correspondent inverse map $\phi_{\sigma_i,\sigma_j}$;
	\end{itemize}

\end{df}
Let $\tau$ be an edge shared by two polygons
$\sigma_{0},\sigma_{1} \in \mathcal{M}_{2}$ , $\tau=\tau_{0}$ in $\sigma_{0}$, $\tau=\tau_{1}$ in $\sigma_{1}$ respectively and let $\gamma=(\gamma_{0},\gamma_{1})$ be a vertex of $\tau$ corresponding to $\gamma_{0}$ in $\sigma_{0}$ and to $\gamma_{1}$ in $\sigma_{1}$. We denote by $\tau_{1}'$
(resp. $\tau'_{0}$) the second edge of $\sigma_{1}$
(resp. $\sigma_{0}$) through $\gamma_{1}$ (resp. $\gamma_{0}$).
We associate to $\sigma_{1}$ and $\sigma_{0}$ two coordinate systems
$(u_{1},v_{1})$ and $(u_{0},v_{0})$ such
that $\gamma_{1}=(0,0)$, $\tau_{1}=\{(u_{1},0), u_{1}\in [0,1]\}$, $\tau'_{1}=\{(0,v_{1}), v_{1}\in [0,1]\}$
and
$\gamma_{0}=(0,0)$, $\tau_{0}=\{(0,v_{0}), v_{0}\in [0,1]\}$, $\tau'_{0}=\{(u_{0},0), u_{0}\in [0,1]\}$, see Figure \ref{fig:gluing}.
\begin{figure}
\begin{center}
  \includegraphics[scale=1.3]{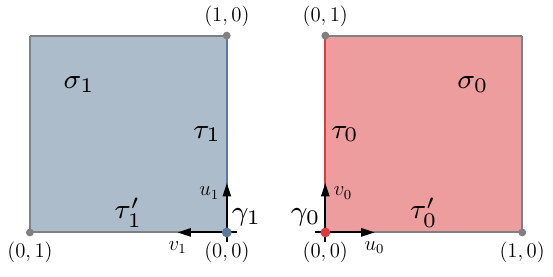}
\end{center}
\caption{Given an edge $\tau$ of a topological surface $\cM$ that is shared by two polygons $\sigma_0,\sigma_1\in\cM$, we associated a different coordinate system to each of these two faces and consider $\tau$ as the pair of edges $\tau_0$ and $\tau_1$ in $\sigma_0$ and $\sigma_1$, respectively.}\label{fig:gluing}
\end{figure}
Using the Taylor expansion at $(0,0)$, a transition map from $U_{\tau,\sigma_1}$ to $U_{\tau,\sigma_0}$ is then of the form 
\begin{equation} \label{eq:transmap}
	\phi_{\sigma_0,\sigma_1}\colon (u_{1},v_{1}) \longrightarrow  (u_{0},v_{0})=
	\begin{pmatrix}
		v_{1} \,\mathfrak{b}_{\tau,\gamma}(u_{1}) + v_{1}^{2} \rho_{1}(u_{1},v_{1})\\
		u_{1}+v_{1}\,\mathfrak{a}_{\tau,\gamma}(u_{1}) + v_{1}^{2} \rho_{2}(u_{1},v_{1})
	\end{pmatrix}
\end{equation}
where $\mathfrak{a}_{\tau,\gamma}(u_{1}), \mathfrak{b}_{\tau,\gamma}(u_{1}),\rho_{1}(u_{1},v_{1}),\rho_{2}(u_{1},v_{1})$ are $C^{1}$ functions.
We will refer to it as the canonical form of the transition map
$\phi_{0,1}$ at $\gamma$ along $\tau$. The functions
$[\mathfrak{a}_{\tau,\gamma},\mathfrak{b}_{\tau,\gamma}]$ are called the {\em gluing data} at $\gamma$
along $\tau$ on $\sigma_{1}$.
\begin{df}\label{def:crossing}
	An edge $\tau\in\cM$ which contains the vertex $\gamma \in \cM$ 
 	is called a {{\em crossing edge at $\gamma$}} if
	$\ma_{\tau,\gamma}(0)=0$ where $[\ma_{\tau,\gamma},\mb_{\tau,\gamma}]$ is the gluing
 	data at $\gamma$ along $\tau$.
 	We define $\cross_{\tau}(\gamma)=1$ if $\tau$ is a 
 	crossing edge at $\gamma$ and $\cross_{\tau}(\gamma)=0$ otherwise.
 	By convention, $\cross_{\tau}(\gamma)=0$ for a boundary edge.
 	If $\gamma\in\cM_0$ is an interior vertex where all adjacent 
 	edges are crossing edges at 
 	$\gamma$, then it is called a {\em crossing vertex}.
 	Similarly, we define $\cross_{+}(\gamma)=1$ if $\gamma$ is a 
 	crossing vertex and $\cross_{+}(\gamma)=0$ otherwise.
\end{df}

\subsection{Differentiable functions}
We define now the differentiable functions on $\cM$ and the spline functions on $\cM$.
\begin{df}[Differentiable functions]
A differentiable function on a topological surface $\cM$ is a collection $f=(f_{\sigma})_{\sigma\in \mathcal{M}_2}$ of differentiable functions such that 
for each two faces $\sigma _0$ and $\sigma_1$ sharing an edge $\tau $ with $\phi_{0,1}$ as transition map, the two functions $f_{\sigma _1}$ and $f_{\sigma _0}\circ \phi_{0,1}$  have the same Taylor expansion of order 1. The function $f_{\sigma}$ is called the restriction of $f$ on the face $\sigma$.
\end{df}

This leads to the following two relations for each $u_1\in [0,1]$:
\begin{eqnarray}
  f_{1}(u_1,0)&=&f_{0}(0,u_1)\label{eq:edgecond0}\\
\frac{\partial f_{1}}{\partial v_1}(u_1,0)&=&\mathfrak{b}_{\tau,\gamma}(u_1)\frac{\partial f_0}{\partial u_0} (0,u_1)+\mathfrak{a}_{\tau,\gamma }(u_1)\frac{\partial f_0}{\partial v_0}(0,u_1)
\label{eq:edgecond}
\end{eqnarray}
where$f_1=f_{\sigma_1}$,  $f_0=f_{\sigma _0}$ are the restrictions of $f$ on the faces $\sigma_{0}$, $\sigma_{1}$.

For $r\in \NN$, let $\cU^{r}= \cS^{r}([0,\frac{1}{2},1])$ be the space of piecewise univariate polynomial functions (or splines) on the subdivision $[0,\frac{1}{2},1]$, which are of class $C^{r}$. We denote by $\cU^{r}_{k}$ the spline functions in $\cU^{r}$ whose polynomial pieces are of degree $\le k$.
We denote by $\RR[u]$ the ring of polynomials in one variable $u$, with coefficients in $\RR$.

Let $\cR^r(\sigma)$ be the space of spline functions of regularity $r$ in each parameter over the $4$-split subdivision of the quadrangle $\sigma$ (see Figure \ref{fig:4-split}), that is, the tensor product of $\cU^{r}$ with itself. 
\begin{figure}
\begin{center}
  \includegraphics[scale=1.75]{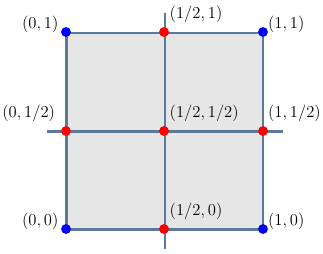}
\end{center}
\caption{4-split of the parameter domain}\label{fig:4-split}
\end{figure}

For $k\in \NN$, the space of b-spline functions of degree $\le k$ in each variable, that is of bi-degree $\le (k,k)$ is denoted $\cR^{r}_{k}(\sigma)$.
A function $f_{\sigma} \in \cR_{k}^{r}(\sigma)$ is represented in the b-spline basis of $\sigma$ as
$$ 
f_{\sigma} := \sum_{0 \le i,j\le m} c_{i,j}^{\sigma}(f_{\sigma}) N_{i}(u_{\sigma}) N_{j}(v_{\sigma}),
$$
where $c_{i,j}^{\sigma}(f_{\sigma})\in \RR$  and $N_{0},\ldots, N_{m}$ are the classical b-spline basis functions of $\cU_{k}^{r}$ with $m= 2 k - r$. The dimension of  $\cR_{k}^{r}(\sigma)$ is $(m+1)^{2}= (2 k-r+1)^{2}$.

The geometric continuous spline functions on $\cM$ are the differentiable functions $f$ on $\cM$, where each component $f_{\sigma}$ on a face $\sigma\in \cM_{2}$ is in $\cR^{r}(\sigma)$. We denote this spline space by $\cS^{1,r}(\cM)$. The set of  splines $f\in \cS^{1,r}(\cM)$ with $f_{\sigma}\in \cR^{r}_{k}(\sigma)$ is denoted $\cS^{1,r}_{k}(\cM)$.

\subsection{Taylor maps}
\label{sec:tay}
An important tool that we are going to use intensively is the Taylor
map associated to a vertex or to an edge of $\cM$.
 For each face $\sigma$  the space of spline functions over a subdivision onto 4 parts as in the figure above will be denoted $\mathcal{R}^r(\sigma)$.
Let $\gamma\in \cM_{0}$ be a vertex on a face $\sigma\in
\cM_{2}$ belonging to two edges $\tau, \tau' \in \cM_{1}$ of
$\sigma$. We define the {\em ring of $\gamma$ on $\sigma$} by 
 $\cR^{\sigma}(\gamma)= \cR(\sigma)/(\ell_{\tau}^{2}, \ell_{\tau'}^{2})$
where $(\ell_{\tau}^{2}, \ell_{\tau'}^{2})$ is the ideal generated by 
the squares of $\ell_\tau$ and $\ell_{\tau'}$, the equations $\ell_{\tau}(u,v)=0$ and $\ell_{\tau'}(u,v)=0$
are respectively the equations of $\tau$ and $\tau'$ in $\cR^r(\sigma)=\mathcal{S}^r$.

The {\em Taylor expansion at $\gamma$ on $\sigma$} is the map
\[
	T_{\gamma}^{\sigma}: f \in \cR^r(\sigma) \mapsto f \mod 
	(\ell_{\tau}^{2}, \ell_{\tau'}^{2}) \text{\; in\; } \cR^{\sigma}(\gamma).
\]
Choosing an adapted basis of $\cR^{\sigma}(\gamma)$, one can define $T_{\gamma}^{\sigma}$ by 
\[
	T_{\gamma}^{\sigma}(f) = \bigl[\, f(\gamma),\, \partial_{u} f(\gamma), \, 
	\partial_{v} 		f(\gamma),\, \partial_{u}\partial_{v} f(\gamma)\, \bigr]. 
\]
The map $T_{\gamma}^{\sigma}$ can also be defined in  another basis
of $\cR^{\sigma}(\gamma)$ in terms of the b-spline coefficients by
\[ 
	T_{\gamma}^{\sigma}(f) = \bigl[\, c^{\sigma}_{0,0}(f),\,  c^{\sigma}_{1,0}(f), \, c^{\sigma}_{0,1}(f),\,  
	c^{\sigma}_{1,1}(f) \, \bigr]
\]
where $c_{0,0}, c_{1,0}, c_{0,1}, c_{1,1}$ are the first b-spline coefficients associated to $f$ on $\sigma$ at $\gamma=(0,0)$. 

We define the Taylor map  $T_\gamma$ on all the faces $\sigma$ that contain $\gamma$, 
\[
	T_{\gamma}: f=(f_{\sigma})\in \oplus_{\sigma}\cR^r(\sigma)
	\rightarrow (T_{\gamma}^{\sigma}(f_{\sigma}))\in
	\oplus_{\sigma\supset\gamma} \cR^{\sigma}(\gamma).
\]
Similarly, we define $T$ as the Taylor map at all the vertices on all the faces of  $\cM$.
\smallskip

If $\tau\in \cM_{1}$ is the edge of the face $\sigma(u_\sigma,v_\sigma) \in \cM_{2}$ associated to $v_{\sigma}=0$, we define the {\em restriction along $\tau$ on $\sigma$} as 
\begin{eqnarray*}
  D_{\tau}^{\sigma}: \cR_{k}^r(\sigma) & \rightarrow & \cR_{k}^r(\sigma) \\ 
  f = \sum_{0\le i,j \le m} c_{i,j}^{\sigma}(f) N_{i}(u_{\sigma}) N_{j}(v_{\sigma}) 
& \mapsto & \sum_{0\le i \le m, 0\le j \le 1} c_{i,j}^{\sigma}(f) N_{i}(u_{\sigma}) N_{j}(v_{\sigma}). 
\end{eqnarray*}
The restrictions along the edges $v_{\sigma}=1$, $u_{\sigma}=0$, $u_{\sigma}=1$ are defined similarly by symmetry. By convention if $\tau$ is not an edge of $\sigma$, $D_{\tau}^{\sigma}=0$. 

For a face $\sigma\in \cM_{2}$, we define the {\em restriction along the edges of $\sigma$} as 
\begin{eqnarray*}
  D^{\sigma}\colon \cR_{k}^r(\sigma) & \rightarrow & \cR_{k}^r(\sigma) \\
  f = \sum_{0\le i,\, j \le m} c_{i,j}^{\sigma}(f) N_{i}(u_{\sigma}) N_{j}(v_{\sigma}) 
& \mapsto & \sum_{\substack{i> 1,\text{ or}\\i<m-1,\, j>1,\\ \text{ or}\  j< m-1}} c_{i,j}^{\sigma}(f) N_{i}(u_{\sigma}) N_{j}(v_{\sigma}).
\end{eqnarray*}
The edge restriction map along all edges of $\cM$ is given by 
\[
	D: f=(f_{\sigma})\in \oplus_{\sigma}\cR_{k}^r(\sigma) 
	\rightarrow (D^{\sigma}(f_{\sigma}))\in 
        \oplus_{\sigma}\cR_{k}^r(\sigma). 
\]

\section{Transition maps}\label{sectran}

The spline space on the mesh $\cM$ is constructed using the transition maps associated to the edges shared by pair of polygons in $\cM$. The transition map accross an edge $\tau$ is given by formula \eqref{eq:transmap}, where
$\ma(u)=\frac{a(u)}{c(u)}, \mb(u)=\frac{b(u)}{c(u)}$ and $[a(u), b(u), c(u)]$ is a triple of functions, called gluing data. In the following, the transition maps will be defined from spline functions  in $\cU^{r}_{l}$, of class $C^{r}$  and degree $l$, with nodes $0, \frac{1}{2}, 1$ for the gluing data.
We assume that the dimension of $\cU^{r}_{l}$ is bigger than $4$, that is, $2l+1-r\geq4$ and $r\geq 0$ so that $l\geq \frac{1}{2}(3 + r)$, which implies that $l\geq 2$. 

We denote by $\md_{0}(u), \md_{1}(u)\in \cU^{r}_{l}$ two spline functions such that
$\md_{0}(0)=1$, $\md_{0} (1)=0$, $\md_{1} (0) =0$, $\md_{1}(1)=1$
and $\md'_{0}(0)=\md'_{0} (1)=\md'_{1} (0) =\md'_{1}(1)=0$. 
We can take, for instance, 
\begin{align}
  \md_{0}(u) &= N_{0}(u)+ N_{1}(u) \label{eq:delta01}\\
  \md_{1}(u) &= N_{m-1}(u)+ N_{m}(u) \nonumber
\end{align}
where $m=2l-r$.
For $l=2$, $r=1$, these functions are 
\begin{align*}
\md_{0}(u) &=\begin{cases}
               1 - 2 u^{2}  & \quad 0\leq u\leq \frac{1}{2}\\ 
               2 (1-u)^{2}  & \quad \frac{1}{2}\leq u \leq 1
             \end{cases}\\
\md_{1}(u) &=\begin{cases}
               2 u^{2} &\quad 0\leq u \leq \frac{1}{2}\\ 
             1 - 2\,(1-u)^{2} & \quad \frac{1}{2}\leq u \leq 1.
             \end{cases}
\end{align*}
For $l=2$, $r=0$, these functions are 
\begin{align*}
  \md_{0}(u) &=\begin{cases}
               1 - 4 u^{2} &  \quad 0\leq u\leq \frac{1}{2}\\ 
               0 & \quad \frac{1}{2}\leq u \leq 1
              \end{cases}\\   
\md_{1}(u) &=\begin{cases}
               0 & \quad 0\leq u \leq \frac{1}{2}\\ 
             1 - 4\,(1-u)^{2} & \quad \frac{1}{2}\leq u \leq 1.
             \end{cases}
\end{align*}
To ensure that the space of spline functions is sufficiently ample (i.e., it contains enough regular functions, see \cite[Definition 2.5]{mourrain_dimension_2016}), we impose compatibility conditions. 

First around an interior vertex $\gamma \in \cM_{0}$, 
which is common to faces $\sigma_{1}, \ldots,
\sigma_{F}$ glued cyclically around $\gamma$, along the edges $\tau_{i}=\sigma_{i}\cap\sigma_{i-1}$ for $i=2,\ldots, F+1$ (with 
$\sigma_{F+1}=\sigma_{1}$), we impose the condition:
$J_{\gamma}(\phi_{1,2})  \circ \cdots \circ  J_{\gamma}(\phi_{N-1,N}) = \II_2$
where $J_{\gamma}$ is the jet or Taylor expansion of order $1$ at $\gamma$. 
%
%
It translates into the following condition (see \cite{mourrain_dimension_2016}):
\begin{condition}\label{cond:comp1}
If  $\gamma\in \cM_{0}$ is an interior vertex and belongs to the faces $\sigma_{1},\ldots,\sigma_{F}$ that are glued cyclically around $\gamma$, then the gluing data $[\ma_{i}, \mb_{i}]$ at $\gamma$ on  the edges $\tau_{i}$ between $\sigma_{i-1}$ and $\sigma_{i}$ satisfies 
	\begin{equation}\label{eq:gc1vertex}
		\prod_{i=1}^F \left(
                  \begin{array}{cc}
                    0 & 1 \\
                    \mb_i(0) & \ma_i(0)

                  \end{array}
		\right) =\left(
		\begin{array}{cc} 1 & 0 \\ 0 & 1 \end{array} \right).
	\end{equation}
\end{condition}
This gives algebraic restrictions on the values $\ma_i(0)$, $\mb_i(0)$.

In addition to Condition~\ref{cond:comp1}, we also consider the following condition around a crossing vertex:
\begin{condition}\label{cond:comp2}
  If the vertex $\gamma$ is a crossing vertex with $4$ edges $\tau_{1},\ldots,\tau_{4}$,
  the gluing data $[\ma_{i},\mb_{i}]$ $i=1 \ldots 4$ on these edges at
  $\gamma$ satisfy
	\begin{align} 
		\ma'_1(0)+\frac{\mb'_4(0)}{\mb_4(0)} &= -\mb_1(0) 
		\left(\ma'_3(0)+\frac{\mb'_2(0)}{\mb_2(0)}\right),\label{eq:ddv1}\\ 
				\ma'_2(0)+\frac{\mb'_1(0)}{\mb_1(0)} &= -\mb_2(0) 
		\left(\ma'_4(0)+\frac{\mb'_3(0)}{\mb_3(0)}\right). \label{eq:ddv2}
	\end{align}
\end{condition}
Let us notice that we can write the previous conditions on the gluing data (which in our setting is given by spline functions) as in \cite{mourrain_dimension_2016} since they depend on the value of the functions defining the gluing data and not on the particular type of functions.  
The conditions (\ref{eq:ddv1}) and (\ref{eq:ddv2}) were introduced in \cite{mourrain_dimension_2016} in the context of gluing data defined from polynomial functions, they generalize the conditions of \cite{peters_complexity_2010}, where $\mb_{i}(0)=-1$. The conditions  come from the relations between the derivatives and the cross-derivatives of the face functions across the edges at a crossing vertex.

An additional condition of topological nature is also considered in \cite{mourrain_dimension_2016}. It ensures that the glued faces around a vertex $\gamma$ are equivalent to sectors around a point in the plane, via the reparameterization maps. We will not need it hereafter.

To define transition maps which satisfy these conditions, we first compute the values of the transition functions $\ma_{\tau}, \mb_{\tau}$ of an edge $\tau$ at its end points and then interpolate the values: 
\begin{enumerate}
	\item For all the vertices $\gamma\in \cM_{0}$ and for all the edges
  	$\tau_{1},\ldots, \tau_{F}$ of $\cM_{1}$ that contain $\gamma$, choose vectors
   	$\ub_{1},\ldots,\ub_{F}\in \RR^{2}$ such that the cones in $\RR^2$ generated by
   	$\ub_{i},\ub_{i+1}$ form a fan in $\RR^{2}$  and such that the
   	union of these cones is $\RR^{2}$ when $\gamma$ is an interior vertex.
    The vector $\ub_i$ is associated to the edge $\tau_i$, so that the sectors $\ub_{i-1},\ub_{i}$ and $\ub_{i},\ub_{i+1}$ define the gluing across the edge $\tau_i$ at $\gamma$. 
  
   	The transition map $\phi_{i-1,i}$ at 
   	$\gamma=(0,0)$ on the edge $\tau_{i}$ is constructed as:
		\[
			J_{(0,0)}(\phi_{i-1,i})^{t} = S \circ
			[\ub_{i},\ub_{i+1}]^{-1} \circ  [\ub_{i-1},\ub_{i}] \circ S
			=\left[ \begin{array}{cc}0&\mb_{{i}}(0)\\ 
			1& \ma_{{i}}(0)\end{array}\right]
		\]
		where $S=\begin{bmatrix}
0 & 1\\
1 & 0
\end{bmatrix}$,
		$[\ub_{i},\ub_{j}]$ is the matrix which columns are the vectors
		$\ub_{i}$ and $\ub_{j}$, and $|\ub_{i},\ub_{j}|$ is the determinant of the
		vectors $\ub_{i},\ub_{j}$.	Thus, 
		\begin{equation}\label{eq:sector}
			\ma_{{i}}(0) = \frac{|\ub_{i+1},\ub_{i-1}|}{|\ub_{i+1},\ub_{i}|},\;
			\mb_{{i}}(0) = - \frac{|\ub_{i},\ub_{i-1}|}{|\ub_{i+1},\ub_{i}|},
		\end{equation}
		so that 
		$\ub_{i-1}= \ma_{{i}}(0) \ub_{i} + \mb_{{i}}(0) \ub_{i+1}$. This implies that Condition \ref{cond:comp1} is satisfied.
	\item For all the shared edges $\tau\in \cM_{1}$, we define the functions	
		$\ma_{\tau}=\frac{a_{\tau}}{c_{\tau}},\mb_{\tau}=\frac{b_{\tau}}{c_{\tau}}$ on the edges $\tau$ by interpolation as follows.
		Assume that the edge $\tau$ is associated to the vectors $\ub^{0}$ and $\ub^{1}$, respectively at the end point $\gamma$ and $\gamma'$ corresponding to the parameters $u=0$ and $u=1$.  
		Let $\ub_{-}^{s},\ub_{+}^{s}\in \RR^{2}$, $s=0,1$ be the vectors which define respectively the previous and next sectors adjacent to $u_i^{s}$ at the point $\gamma$ and $\gamma'$, see Figure ~\ref{fig:edge_int}. We define the gluing data so that it interpolates the corresponding value \eqref{eq:sector} at $u=0$ and $u=1$ as:
\begin{align}
a_\tau(u) &=\; \bigl|\ub_{+}^{0},\ub_{-}^{0}\bigr| \md_{0}(u) + \bigl|\ub_{+}^{1},\ub_{-}^{1}\bigr| \md_{1}(u)\nonumber\\
b_\tau(u) &=-\bigl|\ub^{0},\ub_{-}^{0}\bigr| \md_{0}(u) - \bigl|\ub^{1},\ub_{-}^{1}\bigr| \md_{1}(u)\label{eq:construct1}\\
c_\tau(u) &=\; \bigl|\ub_{+}^{0},\ub^{0}\bigr| \md_{0}(u) + \bigl|\ub_{+}^{1},\ub_{}^{1}\bigr| \md_{1}(u)\nonumber	
\end{align}
where $\md_{0}(u), \md_{1}(u)$ are two Hermite interpolation functions at $u=0$ and $u=1$.

\begin{figure}
\begin{center}
  \includegraphics[scale=0.8]{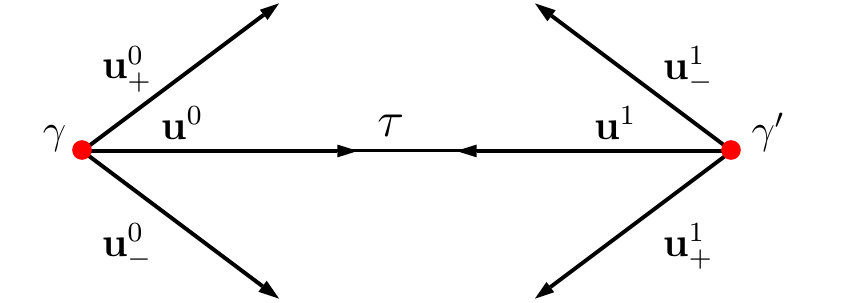}
\end{center}

\caption{The edge $\tau=(\gamma,\gamma')$ is associated to the vectors $\ub^0$ and $\ub^1$ at the points $\gamma$ and $\gamma'$, respectively.}\label{fig:edge_int}
\end{figure}
 
Since the derivatives of $a_\tau, b_\tau, c_\tau$ vanish at $u=0$ and $u=1$, the conditions (\ref{eq:ddv1}) and (\ref{eq:ddv2}) are automatically satisfied at an end point if it is a crossing vertex. 

Another possible construction, with a constant denominator $c_\tau(u)=1$ is:
\begin{align}
a_\tau(u) &=\; \frac{\bigl|\ub_{+}^{0},\ub_{-}^{0}\bigr|}{\bigl|\ub_{+}^{0},\ub_{}^{0}\bigr|} \md_{0}(u) - \frac{\bigl|\ub_{+}^{1},\ub_{-}^{1}\bigr|}{\bigl|\ub_{+}^{1},\ub_{}^{1}\bigr|} \md_{1}(u)\nonumber\\
b_\tau(u) &= - \frac{\bigl|\ub_{}^{0},\ub_{-}^{0}\bigr|}{\bigl|\ub_{+}^{0},\ub_{}^{0}\bigr|} \md_{0}(u) + \frac{\bigl|\ub_{}^{1},\ub_{-}^{1}\bigr|}{\bigl|\ub_{+}^{1},\ub_{}^{1}\bigr|} \md_{1}(u)\label{eq:construct2}\\
c_\tau(u) &=  1\nonumber
\end{align}
\end{enumerate}
The construction \eqref{eq:construct2} specializes to the symmetric gluing used for instance in \cite[\S 8.2]{hahn_geometric_1989}, \cite{hahmann_bicubic_2008}, \cite{bonneau_flexible_2014}:
\begin{align}
		 a_{\tau}&=\; \md_{0} (u) \,2 \cos {\frac{2\pi}{n_{0}}} 
		- \md_{1} (u)  \, 2 \cos \frac{2\pi}{n_{1}}\nonumber\\
		b_{\tau} &=\; -1 \label{eq:hahngluing}\\
		c_{\tau} &=\; 1\nonumber
\end{align} 
where $n_{0}$ (resp. $n_{1}$) is the number of edges at the vertex $\gamma_{0}$ 
(resp. $\gamma_{1}$).
It corresponds to a symmetric gluing, where the angle of two consecutive
edges at $\gamma_{i}$ is $\frac{2\pi}{n_{i}}$.
%

\section{Splines along an edge}\label{sec:4}

The space $\cS^{1,r}_k(\cM)$ of splines over the mesh $\cM$ can be splitted into three linearly independent components: $\cE_k$, $\cH_k$, $\cF_k$ (see Section \ref{sec:dim}) attached respectively to vertices, edges and faces. The objective of this section is to give a dimension formula for the component $\cE(\tau)_k$ attached to the  edge $\tau$ and an explicit base, where $\tau$ is an interior edge, shared by two faces $\sigma_{1}$, $\sigma_{2}\in \cM_{2}$. We denote by  $\cM_{\tau}$ the sub-mesh of $\cM$ composed of the two faces $\sigma_{1}, \sigma_{2}$.

An important step is to analyse the space $\syz^{r,r,r}_k(a,b,c)$ of Syzygies over the base ring $\cU^r$. The relation of this space with $\cE(\tau)_k$ and a basis of $\syz^{r,r,r}_k(a,b,c)$ are presented in Sections \ref{sec:relation_syz} and \ref{sec:syz_basis}.

Next in  Section \ref{sec:sepvert}, we study the effect, on $\cE(\tau)_k$,  of the Taylor map at the two end points of $\tau$ and we determine when they can be separated by the Taylor map. 
 
 The Section \ref{sec:polybasis} shows how to decompose the space $\cS ^{1,r}_k$ for the simple mesh $\cM_{\tau}$, using this Taylor maps at the end points of $\tau$. The same technique will be used to decompose the space $\cS ^{1,r}_k(\cM)$, for a general mesh $\cM$.

\subsection{Relation with Syzygies}\label{sec:relation_syz}


Given spline functions $a, b, c\in \cU ^s_l$ defining the gluing data accross the edge $\tau\in\cM$, and $(f_{1},f_{2})\in \cS_{k}^{1}(\cM_{\tau})$, 
from \eqref{eq:edgecond} we have that:
\[
A(u_1)a(u_1)+B(u_1)b(u_1)+C(u_1)c(u_1)=0
\]
where 
\begin{align*}
	A(u_1) &=\frac{\partial f_2}{\partial v_2}(0,u_1) \in \cU^{0}_{k-1},\\
	B(u_1) &=\frac{\partial f_2}{\partial u_2}(0,u_1) \in \cU^{1}_{k}, \\
	C(u_1) &=-\frac{\partial f_1}{\partial v_1}(u_1,0) \in \cU^{1}_{k}.
\end{align*}
These  are the conditions imposed by the transition map across $\tau$. According to such data, and if the topological surface $\mathcal{M}_\tau$ contains two faces with one transition map along the shared edge $\tau$, then any differentiable spline functions $f=(f_1,f_2)$ over $\cM_\tau$ of bi-degree $\le (k,k)$ is given by the formula:
\begin{align}
 f_1(u_1,v_1)=\bigl(N_{1}(v_1)+N_{0}&(v_{1})\bigr)\left (a_0+\int_0^{u_1}A(t)dt \right)\label{eq18}\\
 &- \frac{1}{2k}\, N_{1}(v_1) C(u_1)+ E_1(u_1,v_1)\nonumber
\end{align}
\begin{align}
f_2(u_2,v_2)=\bigl(N_{1}(u_2)+N_{0}&(u_{2})\bigr) \left(a_0+\int_0^{v_2}A(t)dt \right)\label{eq19}\\
&+ \frac{1}{2k}\, N_{1}(u_2) B(v_2)+ E_2(u_2,v_2),\nonumber
\end{align}
since $N_{0}(0)=1$,  $N_{1}(0)=0$, $N'_{0}(0)=-2 k$, and $N'_{1}(0)= 2k$.

Here $a_{0}\in \RR$, the functions  $E_i \in \ker D_{\tau}^{\sigma_{i}}$ for $i=0,1$, 
and $A,\,B,\, C$ are spline functions of degree at most $k-1, k, k$ and class $C^{0}, C^{1}, C^{1}$, respectively. 

For $r_{1},r_{2},r_{3},k\in \NN$ and $a,b,c\in \cU_l^s$, we denote 
\[
\Syz_{k}^{r_{1},r_{2},r_{3}}(a,b,c)=\bigl\{(A,B,C)\in \cU_{k-1}^{r_{1}}\times \cU^{r_{2}}_{k} \times \cU^{r_{3}}_
{k}\mid A\,a + B\, b + C\, c =0 \bigr\}.
\]
We denote this vector space simply by $\syz_{k}^{r_{1},r_{2},r_{3}}$ when $a,b,c$ are implicitly given.

By \eqref{eq18} and \eqref{eq19},  the splines in $\cS^{1}_{k}(\cM_{\tau})$ with a support along the edge $\tau $ are in the image of the map:
\begin{eqnarray}\label{eq:deftheta}
	\Theta_\tau : \RR\times \syz^{0,1,1}_{k} &\rightarrow & \cS^{r}_k(\cM_\tau) \\
 (a_0, (A,B,C)) & \mapsto& \biggl( \left (a_0+\int_0^{u_1}A(t)dt \right)\, N_{0}(v_{1})\nonumber\\ &&
          \ \ + \left(a_0+\int_0^{u_{1}}A(t)dt - \frac{1}{2k}\,      C(u_1) \right) \, N_{1}(u_1),\nonumber \\
   && \ \ N_{0}(u_{2}) \left(a_0+\int_0^{v_2}A(t)dt \right)\nonumber\\&&+ N_{1}(u_2) \left(a_0+\int_0^{v_2}A(t)dt 
          + \frac{1}{2k}\,  B(v_2) \right) \biggr).\nonumber
 	\label{theta}
\end{eqnarray}
The classical results on the module of syzygies on polynomial rings described in \cite{mourrain_dimension_2016} (see Proposition 4.3. in the reference), will be used in order to prove the corresponding statements in the context of syzygies on spline functions. First, we recall the notation and results concerning the polynomial case. Let $a,\, b,\, c\,$ be polynomials in $R=\RR[u]$, such that $\gcd(a,c)=\gcd(b,c)=1$, then $Z=\syz(a,b,c)$ is the  $R$-module defined by $\syz(a,b,c)=\{(A,B,C)\in \RR[u]^3\colon Aa+Bb+Cc = 0\}$. The degree of an element in $\syz(a,b,c)$ is defined as $\deg(A,B,C)=\max\{\deg(A),$ $\deg(B),\deg(C)\}$, and we are interested in studying the subspace $Z_k\subset \syz(a,b,c)$ of elements of degree less than or equal to $k-1$.
Let us denote $n=\max\{\deg(a),\deg(b),\deg(c)\}$, and
\[
	e = 
	\begin{cases}
		0\,, &\mbox{\; if 
		$ \min\bigl( n+1 - \deg(a), n - \deg(b), n - 
		\deg(c) \bigr) = 0 $\, and} \\ 
		1\,, &\mbox{\; otherwise.}\\ 
	\end{cases}
	\]
\begin{lm}\label{lm:syzygy}
Using the notation above  we have:
	\begin{itemize}
	\item $Z$ is a free $\mathbb{R}[u]$-module of rank $2$. 
	\item If $\mu$ and $\nu $ are the degree of the two free generators of $\syz(a,b,c)$ with $\mu$ minimal, then $\mu+\nu=n$.
	\item $\dim Z_k=(k-\mu+1)_++(k-n+\mu+e)_+$ where $t_+=\max(0,t)$ for any $t\in \mathbb{Z}$.
	\end{itemize}
  A basis with minimal degree corresponds  to what is called a $\mu$-basis in the literature.
\end{lm}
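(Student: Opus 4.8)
The plan is to work with the short exact sequence defining the syzygy module. Consider the $R$-linear map $\psi\colon R^3 \to R$ given by $(A,B,C)\mapsto Aa+Bb+Cc$, whose image is the ideal $(a,b,c)$ and whose kernel is $Z=\syz(a,b,c)$. First I would observe that, since $R=\RR[u]$ is a PID, the ideal $(a,b,c)$ equals $(g)$ where $g=\gcd(a,b,c)$; in fact under the hypothesis $\gcd(a,c)=\gcd(b,c)=1$ one gets $g=1$, so $(a,b,c)=R$ is free of rank $1$. The sequence $0\to Z\to R^3\xrightarrow{\psi} R\to 0$ then splits, giving $Z\oplus R\cong R^3$, so $Z$ is projective of rank $2$; over a PID projective modules are free, hence $Z$ is free of rank $2$. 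This establishes the first bullet.

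For the second bullet, choose a basis $\{p_1,p_2\}$ of $Z$ with $\mu=\deg p_1$ minimal and $\nu=\deg p_2$ (using the convention $\deg(A,B,C)=\max\deg$). The key is to compare leading terms: I would argue that the $2\times 3$ matrix $M$ whose rows are $p_1,p_2$ has the property that its $2\times 2$ minors generate (up to the units, since $\gcd$'s are $1$) the ideal $(a,b,c)=R$, by the Hilbert--Burch type relation — in fact each maximal minor of $M$ is, up to sign and a common scalar, one of $a,b,c$. Concretely, the Buchberger/syzygy theory gives that if $p_1=(A_1,B_1,C_1)$ and $p_2=(A_2,B_2,C_2)$ then $(B_1C_2-B_2C_1, C_1A_2-C_2A_1, A_1B_2-A_2B_1)=\lambda\,(a,b,c)$ for some $\lambda\in R^\times=\RR^\setminus\{0\}$; taking degrees on the component of maximal degree $n=\max\{\deg a,\deg b,\deg c\}$ forces $\mu+\nu=n$. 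The small subtlety is to verify the leading terms do not cancel, which follows from minimality of $\mu$ together with the fact that $\{p_1,p_2\}$ is a basis (any degree drop would produce a lower-degree syzygy or a non-generating pair). This is the part I expect to be the main obstacle: making the leading-term bookkeeping rigorous, in particular handling the case where the maximum $n$ is attained by more than one of $a,b,c$ and the role of the correction term $e$ enters.

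For the third bullet, I would count dimensions directly from the free basis. Since $Z=R\,p_1\oplus R\,p_2$, an element of degree $\le k-1$ is $Q_1 p_1 + Q_2 p_2$ with $\deg(Q_1 p_1)\le k-1$ and $\deg(Q_2 p_2)\le k-1$, i.e. $\deg Q_1\le k-1-\mu$ and $\deg Q_2\le k-1-\nu = k-1-n+\mu$. Naively this gives $(k-\mu)_+ + (k-n+\mu)_+$, so I must account for the discrepancy with the claimed $(k-\mu+1)_+ + (k-n+\mu+e)_+$. The shift by $1$ in the first term and the appearance of $e$ come from the precise definition of $Z_k$ in the lemma: here $Z_k$ consists of $(A,B,C)$ with $A\in\cU^{r_1}_{k-1}$ but $B,C$ of degree $\le k$ — i.e. the degree bounds on the three components are not uniform, matching the bidegree-$(k,k)$ origin via \eqref{eq18}--\eqref{eq19}. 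I would therefore re-examine which component of $p_1$ and $p_2$ carries the top degree: $e=0$ exactly when one of $a,b,c$ has "extremal" degree so that one generator's maximal degree is realized in the $A$-slot (giving the tighter bound $k-1$), and $e=1$ when all generators can use the looser degree-$k$ bound in the $B$ or $C$ slot, gaining one extra dimension. Assembling these bounds and taking positive parts yields the stated formula. The identification with a $\mu$-basis is then immediate from the definition of $\mu$ as the minimal generator degree.
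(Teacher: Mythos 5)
You should first note that the paper does not actually prove this lemma: it states that ``the proof of Lemma \ref{lm:syzygy} can be found in \cite{mourrain_dimension_2016}'', so there is no internal argument to compare against, and your proposal is supplying a proof where the paper has only a citation. Your overall strategy is the standard $\mu$-basis route used in that reference: the first bullet (splitting $0\to Z\to R^3\to R\to 0$ using $(a,b,c)=R$, then ``finitely generated projective over the PID $\RR[u]$ is free'') is complete and correct, and the identity $p_1\times p_2=\lambda\,(a,b,c)$, $\lambda\in\RR\setminus\{0\}$, is indeed the right key fact for the second bullet. You also made a necessary observation that the paper's wording obscures: the stated dimension count only holds if $Z_k$ carries the non-uniform bounds $\deg A\le k-1$, $\deg B,\deg C\le k$ (matching $\Syz_k^{r_1,r_2,r_3}$ and the asymmetric term $n+1-\deg(a)$ in the definition of $e$); e.g.\ for $a=b=1$, $c=-1$ the formula gives $2k+1$, which is the non-uniform count, not the uniform one $2k$.

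The gaps are in the second and third bullets, and they are exactly where the content of the lemma lies. For $\mu+\nu=n$: the inequality $n\le\mu+\nu$ is immediate from the minor identity, but the reverse needs the no-cancellation step you defer, namely that the leading coefficient vectors of the two generators are not proportional; this uses minimality of $\nu$ as well as $\mu$ (the statement implicitly concerns a $\mu$-basis, since replacing $p_2$ by $p_2+u^Np_1$ keeps a basis with arbitrarily large $\nu$), and $e$ plays no role here, contrary to your remark. For the dimension formula, the plan ``bound $Q_1,Q_2$ generator by generator and decide which slot carries the top degree'' does not yield the stated split: for $a=1$, $b=u$, $c=-1$ one has $\mu=0$, $\nu=1$, $e=0$, the per-generator count under the non-uniform bounds is $k+k$, whereas the formula reads $(k+1)+(k-1)$; the totals agree, but the terms do not, so ``assembling these bounds'' is not a proof, and one must in addition exclude cancellations between $Q_1p_1$ and $Q_2p_2$ that could allow larger $Q_i$. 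Your characterization of when $e=0$ or $1$ is also not correct as stated: $e$ depends only on whether $\max(\deg b,\deg c)=n$, equivalently $\max(\deg a,\deg b+1,\deg c+1)=n+1-e$, not on which generator realizes its maximum in the $A$-slot. The clean argument works throughout with the shifted degree $\max(\deg A+1,\deg B,\deg C)$: show $Z$ admits a basis of shifted degrees $\mu',\nu'$ with $\mu'+\nu'=n+1-e$ and no top-degree cancellation, deduce $\dim Z_k=(k-\mu'+1)_++(k-\nu'+1)_+$, and then translate (in the example above $\mu'=\nu'=1$, which also explains why the literal plain-degree formula misbehaves at $k=0$ there). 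As it stands, your proposal fully proves only the first bullet and sketches, without closing, the steps that constitute the other two.
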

The proof of Lemma \ref{lm:syzygy} can  be found in \cite{mourrain_dimension_2016}.

In the following we state the analogous to Lemma \ref{lm:syzygy} in the context of syzygies on spline funtions. 
We consider $\syz_{k}^{r,r,r}$ as defined above, it is the set of spline functions $(A,B,C)\in \cU_{k-1}^{r}\times\cU_{k}^{r}\times \cU^{r}_{k}$ such that $A\,a + B\, b + C\, c =0$. An element of $\syz_{k}^{r,r,r}$  is a triple of pairs of polynomials $((A_{1},A_{2}), (B_{1},B_{2}), (C_{1},C_{2}))$.
Let $R=\RR[u]$, $R_{k}=\{p\in R\mid \deg(p)\leq k\}$, $\cQ^{r}=R/((2u-1)^{r+1})$ and $\cQ^{r}_{k}=R_{k}/((2u-1)^{r+1})$.

The elements $f=(f_{1},f_{2})$ of\, $\cU^{r+1}_{k}$ are  pairs of polynomials $f_{1},f_{2}\in R_{k}$ such that $f_{1}-f_{2}\equiv 0\  \mod (2u-1)^{r+1}$.
Let  $a=(a_1,a_2), b=(b_1,b_2), c=(c_1,c_2)\in\mathcal{U}^r$ with $\gcd(a_1,c_1)=\gcd(a_2,c_2)=\gcd(b_1,c_1)=\gcd(b_2,c_2)=1$.
We consider the following sequence:
\begin{equation}\label{diag}
  0\longrightarrow  \syz_{k}^{r,r,r}  \longrightarrow  \, \syz_{1,k}\times \syz_{2,k}\,\xrightarrow{{\;\, \phi \; }}  
  \, \cQ^{r}_{k-1}\times \cQ_{k}^{r}\times \cQ_{k}^{r}\xrightarrow{{\; \psi \;}}\cQ_{n_{1}+k}^{r}\longrightarrow 0
\end{equation}
where $\syz_{1,k}=\syz_{k}(a_{1},b_{1},c_{1})$, $\syz_{2,k}=\syz_{k}(a_{2},b_{2},c_{2})$, and 
\begin{itemize}
\item$\psi (f,g,h)=a_1f+b_1g+c_1h $,
\item$\phi (A,B,C)=(A_1-A_2,B_1-B_2,C_1-C_2) \pmod {(2u-1)^{r+1}}  $.
\end{itemize}

\begin{lm}\label{lm:exact} The sequence \eqref{diag} is exact for $k\geq n_{1}+r$ where $n_1 = \max\{\deg(a_1),$ $\deg(b_1), \deg(c_1)\}$. 
\end{lm}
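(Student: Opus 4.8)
The plan is to prove that \eqref{diag} is exact by checking exactness separately at each of its four nodes; the first two nodes and the identity $\psi\circ\phi=0$ are formal, and the whole difficulty is concentrated in the surjectivity of $\psi$ and in the inclusion $\ker\psi\subseteq\img\phi$. For the formal part: the first arrow sends a spline syzygy to the pair of its two polynomial pieces and is visibly injective. A pair $\bigl((A_1,B_1,C_1),(A_2,B_2,C_2)\bigr)$ in $\syz_{1,k}\times\syz_{2,k}$ lies in $\ker\phi$ exactly when $A_1\equiv A_2$, $B_1\equiv B_2$, $C_1\equiv C_2\pmod{(2u-1)^{r+1}}$, i.e.\ exactly when $(A_1,A_2)$, $(B_1,B_2)$, $(C_1,C_2)$ are elements of $\cU^r_{k-1}$, $\cU^r_k$, $\cU^r_k$; since each $(A_i,B_i,C_i)$ already satisfies $A_ia_i+B_ib_i+C_ic_i=0$, this is precisely the condition that the resulting triple lie in $\syz_k^{r,r,r}$, so $\ker\phi$ equals the image of the first arrow. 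Finally, using $a_1\equiv a_2$, $b_1\equiv b_2$, $c_1\equiv c_2\pmod{(2u-1)^{r+1}}$ together with $a_1A_1+b_1B_1+c_1C_1=0$ one gets $\psi\phi\bigl((A_1,B_1,C_1),(A_2,B_2,C_2)\bigr)\equiv-(a_2A_2+b_2B_2+c_2C_2)=0$, so $\psi\circ\phi=0$.

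For the surjectivity of $\psi$: since $k\ge r$, the target $\cQ^r_{n_1+k}$ is the local ring $R/((2u-1)^{r+1})$, whose maximal ideal is generated by $2u-1$. From $\gcd(a_1,c_1)=1$ and $\gcd(b_1,c_1)=1$ the pairs $a_1,c_1$ and $b_1,c_1$ are not both divisible by $2u-1$, so at least one of $a_1,b_1,c_1$ is a unit of $\cQ^r_{n_1+k}$, and one may take it to be $c_1$ or, failing that, whichever of $a_1,b_1$ is a unit. Multiplying the corresponding coordinate by the degree-$\le r$ representative of the inverse of its class realizes an arbitrary element of $\cQ^r_{n_1+k}$ as a value of $\psi$; the degree bound on that coordinate ($\le k-1$ for the first, $\le k$ for the other two) is met because $k\ge n_1+r\ge r$, the case $n_1=0$ (all of $a_1,b_1,c_1$ constant) being immediate.

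For the inclusion $\ker\psi\subseteq\img\phi$, given $(f,g,h)\in\ker\psi$ I would fix polynomial representatives with $\deg f\le k-1$, $\deg g,\deg h\le k$, so that $a_1f+b_1g+c_1h=(2u-1)^{r+1}q$ for a polynomial $q$ with $\deg q\le n_1+k-r-1$. The crux is to produce $\tilde f,\tilde g,\tilde h\in R$ with $a_1\tilde f+b_1\tilde g+c_1\tilde h=q$ and $\deg\tilde f\le k-r-2$, $\deg\tilde g,\deg\tilde h\le k-r-1$. Granting this, $(A_1,B_1,C_1):=(f,g,h)-(2u-1)^{r+1}(\tilde f,\tilde g,\tilde h)$ has degrees $\le(k-1,k,k)$, satisfies $a_1A_1+b_1B_1+c_1C_1=(2u-1)^{r+1}q-(2u-1)^{r+1}q=0$, hence lies in $\syz_{1,k}$, and is congruent to $(f,g,h)$ modulo $(2u-1)^{r+1}$; therefore $\phi\bigl((A_1,B_1,C_1),(0,0,0)\bigr)=(f,g,h)$, as needed. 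To obtain $\tilde f,\tilde g,\tilde h$ I would start from a Bézout expression of $q$ in the ideal $(a_1,b_1,c_1)=R$ (which is all of $R$, again by the gcd hypotheses) and subtract from it a suitable $R$-combination of the two free generators of $\syz(a_1,b_1,c_1)$, whose degrees $\mu\le\nu$ satisfy $\mu+\nu=n_1$ by Lemma~\ref{lm:syzygy}; this $\mu$-basis reduction brings the coordinate degrees down to the prescribed bounds precisely when $k\ge n_1+r$.

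The main obstacle is exactly this last degree-controlled representation of $q$: one must verify that the $\mu$-basis reduction really lowers the coefficient of $a_1$ below degree $k-r-1$ and those of $b_1,c_1$ below degree $k-r$, which requires distinguishing according to which of $a_1,b_1,c_1$ attains the degree $n_1$ (the bound on $\deg q$ being sharper when $n_1$ is realized only by $a_1$) and combining this with $\mu+\nu=n_1$ and $\mu\le\nu$. As an alternative way to organize the bookkeeping, the lemma can be derived by applying the snake lemma to the morphism ``multiplication by $(a,b,c)$'' from the Mayer--Vietoris short exact sequences $0\to\cU^r_d\to R_d\oplus R_d\to\cQ^r_d\to 0$ in degrees $d=k-1,k,k$ to the one in degree $d=n_1+k$: exactness of \eqref{diag} then amounts to surjectivity of $\psi$ together with the vanishing of the induced connecting homomorphism $\ker\psi\to\coker$, and this vanishing rests on the same degree estimate.
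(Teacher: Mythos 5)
Your formal steps (injectivity of the first arrow, identification of $\ker\phi$ with $\syz_k^{r,r,r}$, the computation $\psi\circ\phi=0$) and your surjectivity argument for $\psi$ are fine; indeed, observing that one of $b_1,c_1$ is a unit in the local ring $\cQ^r$ is a clean way to get surjectivity directly on the quotient. The genuine gap is in the crucial inclusion $\ker\psi\subseteq\img\phi$: you reduce it to finding $\tilde f,\tilde g,\tilde h$ with $a_1\tilde f+b_1\tilde g+c_1\tilde h=q$, $\deg\tilde f\le k-r-2$, $\deg\tilde g,\deg\tilde h\le k-r-1$, for a $q$ that may have degree as large as $n_1+k-r-1$, and you explicitly defer this ("the main obstacle") to a $\mu$-basis reduction that you do not carry out. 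This is not mere bookkeeping. As stated, for arbitrary $q$ of that degree the claim is false whenever $\deg a_1=n_1>\deg b_1,\deg c_1$: every combination obeying your degree bounds then has degree at most $n_1+k-r-2$, so top-degree $q$ are unreachable; to rescue the argument you would also have to show that the $q$ produced from elements of $\ker\psi$ have strictly smaller degree in that case, and in the remaining cases the surjectivity of the truncated map onto $R_{n_1+k-r-1}$ requires the syzygy dimension count with $\mu_1+\nu_1=n_1$ together with a check of the boundary value $k=n_1+r$. None of this analysis appears in the proposal, and your snake-lemma variant, as you acknowledge, rests on exactly the same unproved estimate.

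The idea that makes all of this unnecessary, and which the paper uses, is to exploit that an element of $\cQ^r_{k-1}\times\cQ^r_k\times\cQ^r_k$ only depends on residues modulo $(2u-1)^{r+1}$: choose the representatives $f,g,h$ of degree $\le r$ rather than $\le k-1,k,k$. Then $a_1f+b_1g+c_1h=(2u-1)^{r+1}d$ with $\deg d\le n_1-1$, and coprimality of $b_1,c_1$ alone gives $d=p\,b_1+q\,c_1$ with $p,q\in R_{n_1-1}$ (no $\mu$-basis, no case distinction on which of $a_1,b_1,c_1$ attains $n_1$). The corrected triple $\bigl(f,\;g-(2u-1)^{r+1}p,\;h-(2u-1)^{r+1}q\bigr)$ is a syzygy of $(a_1,b_1,c_1)$ whose degrees are at most $r$, $n_1+r$, $n_1+r$, hence lies in $\syz_{1,k}$ precisely because $k\ge n_1+r$; paired with $0\in\syz_{2,k}$ it is a $\phi$-preimage of $(f,g,h)$. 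Replacing your choice of lifts by these canonical low-degree representatives turns your unfinished key step into a two-line B\'ezout argument and completes the proof.
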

\begin{proof}
Since $b_{1}, c_{1}$ are coprime, the map
  $\psi: (f,g,h)\in R_{k-1}\times R_{k}\times R_{k} \mapsto a_1f+b_1g+c_1h  \in R_{n_{1}+k}$ is surjective for $k\geq n_{1}-1$.  The map $\phi$, obtained by working modulo $(2u-1)^{r+1}$, remains surjective.

We have to prove that $\ker (\psi)=\img({\phi})$.
If $(A,B,C)\in \syz_1\times \syz_2$ then $\psi\circ\phi(A,B,C)=(A_1a_1+B_1b_1+C_1c_1)-(A_2a_1+B_2b_1+C_2c_1)=-(A_2a_1+B_2b_1+C_2c_1)$. Because $a,b,c\in \cU^{r}$, we have $a_1\equiv a_2 \pmod {(2u-1)^{r+1}}$, $b_1\equiv b_2\pmod {(2u-1)^{r+1}}$ and $c_1\equiv c_2\pmod {(2u-1)^{r+1}}$, so that
\[
\psi \circ\phi (A,B,C)\equiv -(A_2a_2+B_2b_2+C_2c_2)\equiv 0 \pmod {(2u-1)^{r+1}}.
\]
This implies that $\img(\phi) \subset \ker(\psi)$.

Conversely, if $\psi (f,g,h)=0$ with $\deg(f)\le r$, $\deg(g)\leq r$, $\deg(h)\leq r$ then $fa_1+gb_1+hc_1=d\, (2u-1)^{r+1}$ for some polynomial $d\in R$ of degree $\leq n_{1}-1$. Since $\gcd(b_1,c_1)=1$, there exists $p,q \in R_{n_{1}-1}$ such that $d=p\, b_1+q\, c_1$, we deduce that:
\[
(2u-1)^{r+1} d=(2u-1)^{r+1} \,(p\,b_1+q\,c_1)=f\,a_1+g\,b_1+h\,c_1,
\]
with $\deg((2u-1)^{r+1}p)\leq n_{1}+r$.
This yields
\begin{equation}\label{eq:syz1}
f\,a_1+(g - p (2u-1)^{r+1}) \,b_1+(h-(2u-1)^{r+1} q)\,c_1=0.
\end{equation}
Since $k\geq n_{1}+r$, this implies that $((f,0),(g-(2u-1)^{r+1}p,0),(h-(2u-1)^{r+1}q,0))\in \syz_{1,k}\times \syz_{2,k}$ and its image by $\phi$ is $(f,g,h)$. This shows that $\ker(\psi) \subset \img(\phi)$ and implies the equality of the two vector spaces.

\noindent By construction, the kernel of $\phi$ is the pair of triples $((A_{1},B_{1},C_{1}), (A_{2},B_{2},C_{2}))$ in $\syz_{1,k}\times \syz_{2,k}$ such that
$A_{1}-A_{2}\equiv B_{1}-B_{2}\equiv C_{1}-C_{2}\equiv 0 \pmod{(2u-1)^{r+1}}$, that is, the set $\syz^{r,r,r}_{k}$ of triples $(A,B,C)\in \cU_{k-1}^{r}\times \cU_{k}^{r}\times \cU_{k}^{r}$ such that $ A\,a + B\, b + C\, c =0$.
  
This show that the sequence \eqref{diag} is exact.
\end{proof}
 

We deduce the dimension formula:
\begin{prop}\label{dim}
Let $(p_1,q_1)$ (resp. $(p_2,q_2)$) be a basis of $\syz_{1}$ (resp. $\syz_{2}$) of minimal degree $(\mu_1,\nu_1)$ (resp. $(\mu_2, \nu_2)$) and $e_{1}, e_{2}$ defined as above for $(a_{1},b_{1},c_{1})$ and $(a_{2},b_{2},c_{2})$. For $k\geq \min(n_{1},n_{2})+r$,
\begin{align*}
\dim(\syz^{r,r,r}_k)=(k-\mu_1+1)_{+}&+(k-n_{1}+\mu_1+e_{1})_{+}+(k-\mu_2+1)_{+}\\
&+(k-n_{2}+\mu_2+e_{2})_{+}- \min(r+1,k) -(r+1).
\end{align*}
This dimension is denoted $d_\tau(k,r)$.  
\end{prop}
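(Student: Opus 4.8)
The plan is to extract the dimension formula directly from the exact sequence \eqref{diag} established in Lemma~\ref{lm:exact}. Since an exact sequence of finite-dimensional vector spaces has alternating sum of dimensions equal to zero, from
\[
  0\longrightarrow \syz_{k}^{r,r,r}\longrightarrow \syz_{1,k}\times\syz_{2,k}\xrightarrow{\;\phi\;}\cQ^{r}_{k-1}\times\cQ^{r}_{k}\times\cQ^{r}_{k}\xrightarrow{\;\psi\;}\cQ^{r}_{n_{1}+k}\longrightarrow 0
\]
we obtain
\[
  \dim\syz_{k}^{r,r,r}=\dim\syz_{1,k}+\dim\syz_{2,k}-\dim(\cQ^{r}_{k-1}\times\cQ^{r}_{k}\times\cQ^{r}_{k})+\dim\cQ^{r}_{n_{1}+k}.
\]
So the proof reduces to computing each of these four quantities.

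First I would handle the syzygy terms: by Lemma~\ref{lm:syzygy} applied to $(a_1,b_1,c_1)$ and $(a_2,b_2,c_2)$ (whose pairwise gcd hypotheses are exactly those assumed here), $\dim\syz_{1,k}=(k-\mu_1+1)_{+}+(k-n_1+\mu_1+e_1)_{+}$ and similarly for $\syz_{2,k}$, which accounts for the first four summands of the claimed formula. Next I would compute the dimensions of the quotient spaces. One has $\dim\cQ^{r}_{j}=\dim\bigl(R_j/((2u-1)^{r+1})\bigr)$: since $(2u-1)^{r+1}$ has degree $r+1$, for $j\ge r+1$ this is $(j+1)-(j+1-(r+1))=r+1$, and for $j<r+1$ it is simply $j+1$ because multiplication by $(2u-1)^{r+1}$ lands outside $R_j$; thus $\dim\cQ^{r}_{j}=\min(r+1,j+1)$. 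Applying this with $j=k-1,k,k$ and $j=n_1+k$, and using $k\ge\min(n_1,n_2)+r$ so that $n_1+k\ge r+1$ (hence $\dim\cQ^r_{n_1+k}=r+1$) while for the $\cQ^r_k$ factors we likewise get $r+1$ when $k\ge r$, the net contribution of the last three terms of the exact sequence collapses: $-(\dim\cQ^r_{k-1}+\dim\cQ^r_k+\dim\cQ^r_k)+\dim\cQ^r_{n_1+k}=-\min(r+1,k)-(r+1)-(r+1)+(r+1)=-\min(r+1,k)-(r+1)$. Assembling these pieces gives exactly the stated formula for $d_\tau(k,r)$.

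The only subtlety, and the step I expect to require the most care, is bookkeeping the small-degree edge cases to confirm that $\dim\cQ^r_{k-1}=\min(r+1,k)$ (not $\min(r+1,k-1+1)=\min(r+1,k)$ — these agree, but one must be careful that it is $k$ and not $k-1$ that appears) and that the three larger quotient dimensions genuinely equal $r+1$ under the hypothesis $k\ge\min(n_1,n_2)+r$; since $\min(n_1,n_2)\ge 1$ by the coprimality/degree assumptions on the gluing data, $k\ge r+1$, so $\dim\cQ^r_k=r+1$ and $\dim\cQ^r_{n_1+k}=r+1$ indeed. I would also remark that the roles of $n_1$ and $n_2$ could be swapped in Lemma~\ref{lm:exact}, which is consistent because by Lemma~\ref{lm:syzygy} one has $\mu_i+\nu_i=n_i$, so the formula is symmetric in the two faces after simplification. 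With these verifications the alternating-sum computation is routine and the proof is complete.
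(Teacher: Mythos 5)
Your proposal is correct and follows essentially the same route as the paper: it applies the alternating-sum of dimensions to the exact sequence of Lemma~\ref{lm:exact} (after assuming, by symmetry, that $n_1=\min(n_1,n_2)$), uses Lemma~\ref{lm:syzygy} for $\dim\syz_{1,k}$ and $\dim\syz_{2,k}$, and evaluates $\dim\cQ^{r}_{k-1}=\min(r+1,k)$ and $\dim\cQ^{r}_{k}=\dim\cQ^{r}_{n_1+k}=r+1$ under the degree hypothesis. The only difference is that you spell out the computation $\dim\cQ^{r}_{j}=\min(r+1,j+1)$, which the paper simply states; note that the conclusion $\dim\cQ^r_k=r+1$ needs only $k\ge r$ (guaranteed by $k\ge n_1+r$), so your side remark that $\min(n_1,n_2)\ge 1$ is unnecessary.
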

\begin{proof}
  By symmetry, we may assume that $n_{1}= \min(n_{1},n_{2})$.
  For $k\geq  n_{1}+r$, the sequence \eqref{diag} is exact and we have
  $$
  \dim \syz^{r,r,r}_{k} = \dim\syz_{1,k}+\dim \syz_{2,k} - \dim \cQ^{r}_{k-1} -2 \dim \cQ^{r}_{k} + \dim \cQ^{r}_{n_{1}+k}.
  $$
  We have  $\dim \cQ^{r}_{k-1} = \min(r+1,k)$ and $\dim \cQ^{r}_{k} = \dim \cQ^{r}_{n_{1}+k}=r+1$, since $k\geq n_{1}+r$.
  This leads to the formula, using Lemma \ref{lm:syzygy}.
\end{proof}

\subsection{Basis of the syzygy module}\label{sec:syz_basis}

The diagram \eqref{diag} allows to construct a basis for the space of syzygies $\syz_{k}^{r,r,r}$ associated to the gluing data $a,b,c\in \cU^r$. In the rest of this section we will show how to construct such a basis.

\begin{lm}
Assume that $k\geq n_{1}+r$. Using the notation of Proposition \ref{dim}, we have the following assertions:
\begin{itemize}
\item For any $\mathfrak{p}_2\in \syz_{2,k}$, there exists $\mathfrak{p}_1\in \syz_{1,k}$  such that $(\mathfrak{p}_1,\mathfrak{p}_2)\in  \ker( {\phi})$.
\item There exist $t, \; s\in \NN$ such that if $\cG=\{ (p_1(2u-1)^{i},0): 0\leq i\leq t\}\bigcup \{(q_1(2u-1)^{j},0): 0\leq j\leq l \}  $ then ${\phi} (\cG)$ is a basis of the vector space $\ker (\psi)$.
\item $\ker({\phi})\bigoplus \langle \cG\rangle=\syz_{1,k}\times \syz_{2,k}$.

\end{itemize}
\end{lm}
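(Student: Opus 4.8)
The three assertions are really one structural statement about the map $\phi$ restricted to $\syz_{1,k}\times\syz_{2,k}$, unpacked into its consequences. The underlying fact, already furnished by Lemma \ref{lm:exact}, is that for $k\geq n_1+r$ the sequence \eqref{diag} is exact; in particular $\ker(\psi)=\img(\phi)$ and $\phi$ is surjective onto $\cQ^r_{k-1}\times\cQ^r_k\times\cQ^r_k$. My plan is to read off each bullet from this. For the first bullet, take $\mathfrak{p}_2=(A_2,B_2,C_2)\in\syz_{2,k}$ and consider $\phi(0,\mathfrak{p}_2)=-(A_2,B_2,C_2)\bmod(2u-1)^{r+1}$, an element of $\cQ^r_{k-1}\times\cQ^r_k\times\cQ^r_k$. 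Since $\ker(\psi)=\img(\phi)$ and one checks as in the proof of Lemma \ref{lm:exact} that $\psi$ kills this element (because $a_1\equiv a_2$, $b_1\equiv b_2$, $c_1\equiv c_2\bmod(2u-1)^{r+1}$ and $\mathfrak p_2$ is a syzygy for $(a_2,b_2,c_2)$), there exists a preimage $(\mathfrak q_1,\mathfrak q_2)\in\syz_{1,k}\times\syz_{2,k}$ with $\phi(\mathfrak q_1,\mathfrak q_2)=-(A_2,B_2,C_2)\bmod(2u-1)^{r+1}$; then $\mathfrak p_1:=\mathfrak q_1$ together with the obvious adjustment, or more directly $(\mathfrak p_1,\mathfrak p_2)$ with $\mathfrak p_1$ chosen so that $\phi(\mathfrak p_1,0)=\phi(0,\mathfrak p_2)$, lies in $\ker\phi$. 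The key point is surjectivity of $\phi$ on the component coming from $\syz_{1,k}$: I would in fact argue that $\phi$ restricted to $\syz_{1,k}\times\{0\}$ already surjects onto $\ker(\psi)$, which is the content of the second bullet and makes the first bullet immediate.

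For the second bullet I would use the $\mu$-basis $(p_1,q_1)$ of $\syz_1$ of degrees $(\mu_1,\nu_1)$. Every element of $\syz_{1,k}$ is an $R$-combination $\alpha p_1+\beta q_1$ with $\deg\alpha\leq k-1-\mu_1$ and $\deg\beta\leq k-1-\nu_1$ (here using $\deg\leq k-1$ for the first slot; the degree-$k$ slots are handled by $n_1=\mu_1+\nu_1$). Applying $\phi(\cdot,0)$ and reducing modulo $(2u-1)^{r+1}$, the images of $p_1,(2u-1)p_1,\dots,(2u-1)^t p_1$ and $q_1,(2u-1)q_1,\dots,(2u-1)^l q_1$ span $\phi(\syz_{1,k}\times\{0\})$ as a $\RR$-vector space, where $t$ is the largest index with $(2u-1)^i p_1$ still of admissible degree and similarly for $l$; the point is that $\{(2u-1)^i:0\le i\le r\}$ together with the $R$-span already generates $R/((2u-1)^{r+1})$, so no higher powers are needed and $t,l\le r$ suffice to reach everything. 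One then checks this spanning set is in fact a basis of $\ker(\psi)=\img(\phi)$ by a dimension count: $\dim\ker(\psi)=\dim\cQ^r_{k-1}+2\dim\cQ^r_k-\dim\cQ^r_{n_1+k}=\min(r+1,k)+(r+1)$ by exactness and the dimension values computed in the proof of Proposition \ref{dim}, and the cardinality of $\cG$ is $(t+1)+(l+1)$, which I would verify equals that number by choosing $t=\min(r,k-1-\mu_1)$, $l=\min(r,k-1-\nu_1)$ and using $k\geq n_1+r=\mu_1+\nu_1+r$. The linear independence of $\phi(\cG)$ then follows from equality of dimensions once spanning is established.

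For the third bullet, I would combine the previous two. We have $\langle\cG\rangle\subset\syz_{1,k}\times\{0\}$ with $\phi$ injective on $\langle\cG\rangle$ (since $\phi(\cG)$ is a basis, in particular linearly independent, and $\cG$ is a basis of $\langle\cG\rangle$), so $\langle\cG\rangle\cap\ker\phi=0$. On the other hand $\phi(\langle\cG\rangle)=\img\phi$, so for any $x\in\syz_{1,k}\times\syz_{2,k}$ there is $g\in\langle\cG\rangle$ with $\phi(x-g)=0$, i.e. $x-g\in\ker\phi$; hence $\syz_{1,k}\times\syz_{2,k}=\ker\phi+\langle\cG\rangle$, and the sum is direct. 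The first bullet then also follows a posteriori, since for $\mathfrak p_2\in\syz_{2,k}$ we write $(0,\mathfrak p_2)=g+(\text{element of }\ker\phi)$ with $g=(\mathfrak p_1',0)\in\langle\cG\rangle$, so $(-\mathfrak p_1',\mathfrak p_2)\in\ker\phi$.

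**Main obstacle.** The routine exact-sequence manipulations are harmless; the delicate point is pinning down the integers $t,s$ (called $t,l$ above) so that $\cG$ is simultaneously a spanning set and has the right cardinality. This requires being careful that the degree bound $\deg\leq k-1$ in the first slot of a syzygy (versus $\deg\leq k$ in the others) interacts correctly with the multiplications by powers of $(2u-1)$, and that the inequality $k\geq n_1+r$ is exactly what guarantees $\min(r,k-1-\mu_1)+\min(r,k-1-\nu_1)+2$ matches $\min(r+1,k)+(r+1)$; a cleaner route is to avoid the explicit combinatorics by defining $\cG$ as any subset of $\{(2u-1)^i p_1,(2u-1)^j q_1\}$ whose image is a basis of the $(\min(r+1,k)+(r+1))$-dimensional space $\ker\psi$, whose existence is guaranteed by surjectivity of $\phi|_{\syz_{1,k}\times\{0\}}$, and to deduce the numbers $t,s$ after the fact.
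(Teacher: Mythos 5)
Your proposal is correct and follows essentially the same route as the paper: exactness of the sequence \eqref{diag}, the fact (from the construction in the proof of Lemma \ref{lm:exact}) that $\phi$ restricted to $\syz_{1,k}\times\{0\}$ surjects onto $\ker(\psi)$, images of the elements $(2u-1)^i p_1$, $(2u-1)^j q_1$ as a spanning set, and then linear algebra for the direct-sum decomposition. The only difference is cosmetic: you make the choice of $t$ and $l$ explicit via a dimension count (and note the same harmless sign adjustment for $\mathfrak{p}_1$ that the paper also glosses over), where the paper simply invokes the module structure to extract the basis.
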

\begin{proof}
Let $\mathfrak{p}_2=(A_2,B_2,C_2)\in \syz_{2,k}$. 
As ${\phi}((0,\mathfrak{p}_2)) =(f, g, h)$ is in $\ker(\psi)$ (since $\psi \circ \phi=0$), we can construct $\mathfrak{p}_1 \in \syz_{1,k}$ such that 
$\phi((\mathfrak{p}_1,0))= {\phi}((0,\mathfrak{p}_2))$ as we did in the proof of Lemma \ref{lm:exact} for $(f, g, h)\in \ker(\psi)$ using relation \eqref{eq:syz1}. This gives an element of the form $(\mathfrak{p}_1,0)\in  \syz_{1,k}\times\{0\}$, and finally  $(\mathfrak{p}_1,\mathfrak{p}_2)\in  \ker( {\phi})$, this proves the first point.

The second point follows from the fact that ${\phi }(\syz _{1,k}\times \{0\}) =\ker (\psi)$ (since by Lemma \ref{lm:exact}, the sequence \eqref{diag} is exact) and that $\{(p_1(2u-1)^i,0): \;i\leq k-\mu_1\} \bigcup \{(q_1(2u-1)^j,0) j\leq k-\nu_1\}$ is a basis of $\syz_{1,k}\times \{0\}$ as a vector space, thus the image of this basis is a generating set for $\ker(\psi)$. Since it is a $R$-module, it has a basis as described in the second point of this lemma.

The third point is a direct consequence of the second one.
\end{proof}

Considering the map in (\ref{theta}), the first point of the lemma has an intuitive meaning: any function defined on a part of $\mathcal{M}_\tau $ and that satisfies  the gluing conditions imposed by $a_1,b_1,c_1$ can be extended to a function over  $\mathcal{M}_\tau$ that satisfies the gluing  conditions $a,b,c$.
The third point allows us to define the projection $\pi^r_1$ of an element on $\ker ({\phi})$ along $\langle \cG\rangle$.


%
Let  $(\tilde{p_2},p_2)$, $(\tilde{q_2}, q_2)$ be the two projections of $(0,p_2)$ and $(0, q_2)$ by $\pi^r_1$ respectively. We denote:
\begin{itemize}
\item $\cZ^{r}_1=\{(0,(2u-1)^{i}p_2): r+1 \leq i \leq k-\mu_{2}\}$
\item $\cZ^{r}_2=\{(0,(2u-1)^{i}q_2):r+1 \leq i \leq k-\nu_{2}\}$
\item $\cZ^{r}_3=\{ ((2u-1)^{i}q_1,0):r+1 \leq i \leq k-\mu_{1}\}$
\item $\cZ^{r}_4=\{ ((2u-1)^{i}p_1,0): r+1 \leq i \leq k-\nu_{2}\}$
\item $\cZ^{r}_5=\{ (2u-1)^i(\tilde{p_2},p_2):0\leq i\leq r\}$
\item $\cZ^{r}_6=\{ (2u-1)^i(\tilde{q_2},q_2):0\leq i \leq r\}$
\item $\cZ^{r}=\cZ^{r}_1 \bigcup\cZ^{r}_2 \bigcup\cZ^{r}_3\bigcup\cZ^{r}_4\bigcup\cZ^{r}_5\bigcup\cZ^{r}_6$
\end{itemize}

\begin{prop}\label{basis}  Using the notation above we have the following:
\begin{itemize}
 \item The set  $\cZ^{r}$ is a basis of the vector space $\syz^{r,r,r}_{k}$.
 
 \item The set $\cY=\{ (0,(2u-1)^{r+1}p_2),(0,(2u-1)^{r+1}q_2), (\tilde{p_2},p_2), (\tilde{q_2},q_2),((2u-1)^{r+1}q_1,0),((2u-1)^{r+1}p_1,0)\}$ is a generating set of the $R$-module $\syz^{r,r,r}$.
\end{itemize}
\end{prop}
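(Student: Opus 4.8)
\noindent{\em Proof plan.}\ The plan is to use Lemma~\ref{lm:exact} to identify $\syz^{r,r,r}_{k}$ with $\ker\phi\subseteq\syz_{1,k}\times\syz_{2,k}$, together with the splitting $\syz_{1,k}\times\syz_{2,k}=\ker\phi\oplus\langle\cG\rangle$ from the previous lemma. For $k$ large enough one has $\dim\ker\psi=2(r+1)$, and since $\phi$ annihilates every entry divisible by $(2u-1)^{r+1}$, the set $\cG$ may be taken to be $\{(p_1(2u-1)^i,0):0\le i\le r\}\cup\{(q_1(2u-1)^j,0):0\le j\le r\}$. I would then exhibit an $\RR$-basis of $\syz_{1,k}\times\syz_{2,k}$ of the form $\cG\cup\cZ^{r}$ (a disjoint union) with $\cZ^{r}\subseteq\ker\phi$; since $\langle\cG\rangle$ is a complement of $\ker\phi$ in that space, this forces $\cZ^{r}$ to be an $\RR$-basis of $\ker\phi=\syz^{r,r,r}_{k}$, which is the first assertion.

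To build this basis I would start from the product $\mu$-basis of $\syz_{1,k}\times\syz_{2,k}$ furnished by Lemma~\ref{lm:syzygy}, namely the elements $(p_1(2u-1)^i,0)$ with $0\le i\le k-\mu_1$, $(q_1(2u-1)^j,0)$ with $0\le j\le k-\nu_1$, $(0,p_2(2u-1)^a)$ with $0\le a\le k-\mu_2$, and $(0,q_2(2u-1)^b)$ with $0\le b\le k-\nu_2$ (with the usual one-element adjustment in the exceptional cases $e_1=0$ or $e_2=0$). Its members with exponent $\ge r+1$ are precisely $\cZ^{r}_4,\cZ^{r}_3,\cZ^{r}_1,\cZ^{r}_2$ (on the $p_1$-, $q_1$-, $p_2$- and $q_2$-sides respectively), and each of them lies in $\ker\phi$ because all its entries are divisible by $(2u-1)^{r+1}$, hence vanish modulo $(2u-1)^{r+1}$. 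Its first-side members with exponent $\le r$ constitute $\cG$. I would then replace the remaining members $(0,p_2(2u-1)^i)$ and $(0,q_2(2u-1)^j)$, $0\le i,j\le r$, by $(2u-1)^i(\tilde{p_2},p_2)$ and $(2u-1)^j(\tilde{q_2},q_2)$, i.e.\ by the members of $\cZ^{r}_5$ and $\cZ^{r}_6$. Because $(-\tilde{p_2},0)=(0,p_2)-(\tilde{p_2},p_2)\in\langle\cG\rangle$, the vector $\tilde{p_2}$ lies in $\spanset_\RR\{p_1(2u-1)^i,\,q_1(2u-1)^j:\ i,j\le r\}$, and similarly for $\tilde{q_2}$; hence $(2u-1)^i(\tilde{p_2},p_2)=(0,p_2(2u-1)^i)+\bigl((2u-1)^i\tilde{p_2},0\bigr)$ differs from the element it replaces only by an $\RR$-combination of members of $\cG\cup\cZ^{r}_3\cup\cZ^{r}_4$. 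Consequently the change of basis from the product $\mu$-basis to $\cG\cup\cZ^{r}$ is unipotent, so $\cG\cup\cZ^{r}$ is again an $\RR$-basis of $\syz_{1,k}\times\syz_{2,k}$; and $\cZ^{r}_5,\cZ^{r}_6\subseteq\ker\phi$ because $(\tilde{p_2},p_2),(\tilde{q_2},q_2)\in\ker\phi$ by definition of the projection $\pi^{r}_1$ and $\ker\phi$ is an $R$-submodule. This completes the first assertion.

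For the second assertion I would observe that $\cY$ contains exactly one $R$-module generator of each of the six families: $(0,(2u-1)^{r+1}p_2)$ and $(0,(2u-1)^{r+1}q_2)$ generate $\cZ^{r}_1$ and $\cZ^{r}_2$; $((2u-1)^{r+1}q_1,0)$ and $((2u-1)^{r+1}p_1,0)$ generate $\cZ^{r}_3$ and $\cZ^{r}_4$; and $(\tilde{p_2},p_2),(\tilde{q_2},q_2)$ generate $\cZ^{r}_5$ and $\cZ^{r}_6$. Hence $\langle\cZ^{r}\rangle_\RR\subseteq\langle\cY\rangle_{R}\subseteq\syz^{r,r,r}$ for every $k$ in the relevant range. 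Since any element of $\syz^{r,r,r}$ has bounded bi-degree, it lies in $\syz^{r,r,r}_{k'}$ for $k'$ large, hence in $\langle\cZ^{r}\rangle_\RR$ for that $k'$ by the first assertion, hence in $\langle\cY\rangle_{R}$. Therefore $\langle\cY\rangle_{R}=\syz^{r,r,r}$.

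The step I expect to be the main obstacle is the degree bookkeeping hidden in the word ``unipotent'': one must verify that the modified elements $(2u-1)^i(\tilde{p_2},p_2)$ and $(2u-1)^j(\tilde{q_2},q_2)$, $0\le i,j\le r$, still lie in $\syz_{1,k}\times\syz_{2,k}$ and that their first coordinates expand into $\cG\cup\cZ^{r}_3\cup\cZ^{r}_4$. This rests on the bound $\deg\tilde{p_2},\deg\tilde{q_2}\le\nu_1+r$, and it is precisely what the hypothesis ``$k$ big enough'' is for; in parallel, the one-element correction to the product $\mu$-basis in the cases $e_1=0$ or $e_2=0$ must be carried consistently into $\cZ^{r}_1,\dots,\cZ^{r}_4$, so that $|\cZ^{r}|$ matches the dimension $d_\tau(k,r)$ of Proposition~\ref{dim}.
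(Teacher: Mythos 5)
Your argument is correct, but the route you take for the first assertion is genuinely different from the paper's. The paper proves that $\cZ^{r}$ is a basis by (i) observing that its cardinality equals $\dim \syz^{r,r,r}_{k}=d_\tau(k,r)$ from Proposition \ref{dim}, and (ii) checking linear independence directly: a vanishing combination is split into its two polynomial components, the second-component identity forces the coefficients of $\cZ^{r}_1,\cZ^{r}_2,\cZ^{r}_5,\cZ^{r}_6$ to vanish because $p_2,q_2$ freely generate $\syz_2$ over $R$, and the first-component identity then kills the remaining coefficients because $p_1,q_1$ freely generate $\syz_1$. You instead realize $\cG\cup\cZ^{r}$ as a unipotent modification of the product $\mu$-basis of $\syz_{1,k}\times\syz_{2,k}$ and extract $\cZ^{r}$ as a basis of $\ker\phi=\syz^{r,r,r}_{k}$ via the complement $\langle\cG\rangle$. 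This buys you the spanning property without invoking the dimension formula (in effect you re-derive Proposition \ref{dim}), and it makes the relation between $\cZ^{r}$ and the product $\mu$-basis explicit; the price is exactly the bookkeeping you flag: pinning down $t=s=r$ in $\cG$ (which is fine, since $\dim\ker\psi=2(r+1)$, $\phi(\syz_{1,k}\times\{0\})=\ker\psi$, and the powers $\ge r+1$ map to zero, so the $2(r+1)$ surviving images span and hence form a basis), and verifying that the expansions of $\bigl((2u-1)^{i}\tilde{p_2},0\bigr)$ stay inside the truncated spaces, which may demand a slightly larger $k$ than the paper's argument, whose independence step works as soon as the listed elements lie in $\syz^{r,r,r}_{k}$, and which, like the paper, has to carry the one-element corrections when $e_1=0$ or $e_2=0$. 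For the second assertion your argument coincides with the paper's: each family $\cZ^{r}_j$ consists of $R$-multiples of a single element of $\cY$, and since $\cY$ does not depend on $k$ while every element of $\syz^{r,r,r}$ lies in some $\syz^{r,r,r}_{k'}$, the module is generated by $\cY$.
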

\begin{proof}
The cardinal of $\cZ^{r}$ is equal to the dimension of $\syz^{r,r,r}_k$, we have to prove that it is a free set.
Let $\mathfrak{a}=(\mathfrak{a}_i)$, $\mathfrak{b}=(\mathfrak{b}_i)$, $\mathfrak{c}=(\mathfrak{c}_i)$, $\mathfrak{d}=(\mathfrak{d}_i)$, $\mathfrak{e}=(\mathfrak{e}_i)$, $\mathfrak{f}=(\mathfrak{f}_i)$ for $i\in \{ 0,\ldots, k\}$ a set of coefficients. Suppose that:

\begin{align*}
0 &= \sum_{i=0}^{r}\mathfrak{a}_i(2u-1)^i(\tilde{p_2},p_2)
 + \sum_{i=0}^{r}\mathfrak{b}_i(2u-1)^i (\tilde{q_2},q_2)\\
 & +\sum_{i=0}^{k-r - \nu_1}\mathfrak{c}_i ((2u-1)^{i+r+1}q_1,0) + \sum_{i=0}^{k-r-\mu_1}\mathfrak{e}_i((2u-1)^{i+r+1}p_1,0)\\
 &+
\sum_{i=0}^{k-r-\mu_2}\mathfrak{d}_i(0,(2u-1)^{r+i+1}p_2) 
 + \sum_{i=0}^{k-r-\nu_2}\mathfrak{f}_i(0,(2u-1)^{i+r+1}q_2).
\end{align*}
Then we have the following equations, 
 \begin{align}
 0  = & \sum_{i=0}^r\mathfrak{a}_i(2u-1)^i\tilde{p_2} + \sum_{i=0}^r\mathfrak{b}_i(2u-1)^i \tilde{q_2} + \sum_{i=0}^{k-r-\nu_1}\mathfrak{c}_i(2u-1)^{r+1+i}q_1\nonumber\\
 & + \sum_{i=0}^{k-r-\mu_1}\mathfrak{e}_i(2u-1)^{r+1+i}p_1 \label{eq14} \\
0 = & \sum_{i=0}^r\mathfrak{a}_i(2u-1)^ip_2+\sum_{i=0}^r\mathfrak{b}_i(2u-1)^i q_2 +
\sum_{i=0}^{k-r-\mu_2}\mathfrak{d}_i(2u-1)^{r+1+i}p_2 \nonumber\\
&+ \sum_{i=0}^{k-r-\nu_2}\mathfrak{f}_i(2u-1)^{r+1+i}q_2
\label{eq15}
\end{align}
we know that $p_2$ and $q_2$ are free generators of $\syz_2$, by (\ref{eq15}) this means that all the coefficients $\mathfrak{a}_i$, $\mathfrak{b}_i$, $\mathfrak{d}_i$, $\mathfrak{f}_i$ that are used in the equation are zero. Replacing in the equation(\ref{eq14}) we get in the same way that the other coefficients $\mathfrak{c}_i,\mathfrak{e}_i$ are zero, so the set is free. Finally since the set $\cY$ does not change when $k$ changes, then $\cY$ generates $\syz^{r,r,r}$. 
\end{proof}

We have similar results if we proceed in a symmetric way exchanging the role of the first and second  polynomial components of the spline functions. The corresponding basis of $\syz_{k}^{r,r,r}$ is denoted $\cZ'^{r}$ and the generating set of the $R$-module is 
\begin{align*}
\cY' = \bigl \{ \bigl(0,(2u-1)^{r+1}p_2\bigr),\,& \bigl(0,(2u-1)^{r+1}q_2\bigr),\, \bigl(p_1,\tilde{p_1}\bigr),\\
& \bigl(q_1,\tilde{q_1}\bigr),\, \bigl((2u-1)^{r}q_1,0\bigr),\, \bigl((2u-1)^{r}p_1,0\bigr)\bigr\}.
\end{align*}
It remains to compute the dimension and a basis for $\syz_{k}^{r-1,r,r}$, we deduce them those of $\syz_{k}^{r-1,r-1,r-1}$ and $\syz_{k}^{r,r,r}$, and it will depend on the gluing data as we explain in the following. 
\begin{prop}\label{difreg}\ 
\begin{itemize}
\item If \, $a(1/2)\neq 0$\, then \, $\syz_{k}^{r,r,r}=\syz_{k}^{r-1,r,r}$, otherwise we have that \, $\dim (\syz_{k}^{r-1,r,r})= \dim (\syz_{k}^{r,r,r})+1$.
\item For the second case, an element in $\syz_{k}^{r-1,r,r}\setminus \syz_{k}^{r,r,r}$ is of the form: $\alpha (2u-1)^r(0,p_2)+\beta (2u-1)^r(0,q_2)$, with $\alpha, \beta \in \RR$.
\end{itemize}
\end{prop}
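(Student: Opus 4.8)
The plan is to compare the two syzygy spaces $\syz_k^{r,r,r}$ and $\syz_k^{r-1,r,r}$ by examining which triples $(A,B,C)$ in $\cU_{k-1}^{r-1}\times\cU_k^r\times\cU_k^r$ satisfying $Aa+Bb+Cc=0$ fail to have the $(r)$-regularity on the $A$-component. Writing $A=(A_1,A_2)$, the condition $A\in\cU_{k-1}^r$ means $A_1-A_2\equiv 0\pmod{(2u-1)^{r+1}}$, while $A\in\cU_{k-1}^{r-1}$ only requires $A_1-A_2\equiv0\pmod{(2u-1)^{r}}$. So $\syz_k^{r-1,r,r}/\syz_k^{r,r,r}$ injects into the one-dimensional quotient $R/((2u-1))$ by sending $(A,B,C)$ to the coefficient of $(2u-1)^r$ in $A_1-A_2$; hence $\dim\syz_k^{r-1,r,r}-\dim\syz_k^{r,r,r}\in\{0,1\}$. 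The task is to decide which case occurs, and to exhibit a witness in the rank-increasing case.

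Next I would run the exact-sequence machinery of Lemma~\ref{lm:exact} and Proposition~\ref{dim} with the middle regularity on the $A$-slot lowered from $r$ to $r-1$; that is, replace $\cQ^r_{k-1}$ by $\cQ^{r-1}_{k-1}$ in the sequence \eqref{diag}. Since $\dim\cQ^{r-1}_{k-1}=\min(r,k)$ for $k$ large, the dimension count drops by exactly $1$ relative to Proposition~\ref{dim}, giving $\dim\syz_k^{r-1,r,r}=\dim\syz_k^{r,r,r}+1$ — \emph{provided} the analogous sequence is still exact. The surjectivity of $\psi$ and $\phi$ is unaffected, and $\img\phi\subseteq\ker\psi$ still holds; the only delicate point is the reverse inclusion $\ker\psi\subseteq\img\phi$ when the first coordinate is only allowed regularity $r-1$. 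Inspecting the proof of Lemma~\ref{lm:exact}, the element $f$ in the triple $(f,g,h)\in\ker\psi$ now has $\deg f\le r-1$ (rather than $\le r$), and one must check that the lifted element $((f,0),(g-(2u-1)^{r+1}p,0),(h-(2u-1)^{r+1}q,0))$ still lies in $\syz_{1,k}\times\syz_{2,k}$ with the new degree bound on the first component — which it does, since lowering a degree bound only shrinks the space but the constructed $f$ automatically satisfies it. So the argument goes through unchanged when $a(1/2)\neq 0$... but this is exactly where the dichotomy must enter, so I need to be careful.

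The real content is the role of $a(1/2)$. The point is that $\phi$ acting on the $A$-component modulo $(2u-1)^{r+1}$ versus modulo $(2u-1)^r$ interacts with the relation $Aa+Bb+Cc=0$: at $u=1/2$, reducing modulo successive powers of $(2u-1)$ the leading new freedom in $A$ is constrained by multiplication by $a$. Concretely, if $(A,B,C)\in\syz_k^{r-1,r,r}$ with $A_1-A_2=\lambda(2u-1)^r+O((2u-1)^{r+1})$, then from $A_1a_1+B_1b_1+C_1c_1=0=A_2a_2+B_2b_2+C_2c_2$ and $a_1\equiv a_2$, $b_1\equiv b_2$, $c_1\equiv c_2\pmod{(2u-1)^{r+1}}$, subtracting gives $(A_1-A_2)a_1\equiv -(B_1-B_2)b_1-(C_1-C_2)c_1\pmod{(2u-1)^{r+1}}$, and the right side is $\equiv 0\pmod{(2u-1)^{r+1}}$ since $B,C\in\cU^r$. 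Thus $\lambda(2u-1)^r a_1\equiv 0\pmod{(2u-1)^{r+1}}$, forcing $\lambda\, a_1(1/2)=0$; so if $a(1/2)\neq 0$ then $\lambda=0$ and $\syz_k^{r,r,r}=\syz_k^{r-1,r,r}$, while if $a(1/2)=0$ the constraint is vacuous and I expect the extra dimension to be realized. \textbf{This is the main obstacle:} showing that when $a(1/2)=0$ there actually \emph{exists} a syzygy with $\lambda\neq 0$, rather than merely that nothing obstructs it a priori; I would produce it explicitly. Take $(p_2,q_2)$ the $\mu$-basis of $\syz_2$; then $(2u-1)^r(0,p_2)$ and $(2u-1)^r(0,q_2)$ are triples whose first components are $(0,(2u-1)^rA_2^{(p)})$ etc., with $A_1-A_2=-(2u-1)^r A_2^{(p)}$ nonzero at $u=1/2$ precisely when the $A$-entry of $p_2$ (or $q_2$) is nonzero there — and since $a_2(1/2)=0$ while $\gcd(b_2,c_2)=1$, at least one of the $A$-components of $p_2,q_2$ is a unit at $1/2$. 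One checks directly that $(2u-1)^r(0,p_2)$ satisfies the syzygy relation (it is $(2u-1)^r$ times a genuine syzygy of $a_2,b_2,c_2$, and the $a_1$-side contributes $0$ because we are in the $(0,\cdot)$ slot) and lies in $\cU_{k-1}^{r-1}\times\cU_k^r\times\cU_k^r$ but not in $\cU_{k-1}^r\times\cdots$. This establishes both bullets, with the explicit form $\alpha(2u-1)^r(0,p_2)+\beta(2u-1)^r(0,q_2)$ as claimed. Finally, I would double-check the degree bound: $(2u-1)^r p_2$ has $A$-degree $\le r+\mu_2-1$ and $B,C$-degree $\le r+\mu_2\le r+n_2$, which sits inside degree $k$ under the hypothesis $k\ge\min(n_1,n_2)+r$ after possibly also invoking $k\ge n_2+r$ — I would state the needed lower bound on $k$ explicitly to be safe.
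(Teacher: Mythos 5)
Your two correct pieces are the codimension bound and the first half of the dichotomy: the map sending $(A,B,C)\in\syz_k^{r-1,r,r}$ to the coefficient of $(2u-1)^r$ in $A_1-A_2$ has kernel $\syz_k^{r,r,r}$ and target $\RR$, so the codimension is at most $1$; and your subtraction argument ($(A_1-A_2)a_1\equiv -(B_1-B_2)b_1-(C_1-C_2)c_1\equiv 0 \pmod{(2u-1)^{r+1}}$, hence $\lambda\,a(1/2)=0$) cleanly gives equality when $a(1/2)\neq 0$, by a route slightly more direct than the paper's. The genuine gap is the case $a(1/2)=0$: your proposed witnesses are in general \emph{not} elements of $\syz_k^{r-1,r,r}$. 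For $(2u-1)^r(0,p_2)$ to lie in $\cU^{r-1}_{k-1}\times\cU^{r}_{k}\times\cU^{r}_{k}$ you need its second and third components $(0,(2u-1)^rp_2^2)$ and $(0,(2u-1)^rp_2^3)$ to have regularity $r$ at the node, i.e. $p_2^2(1/2)=p_2^3(1/2)=0$, which has no reason to hold; your verification only addresses the syzygy relation and the first component. This membership constraint is exactly where the hypothesis $a(1/2)=0$ must enter, and your construction never uses it there: you invoke $a_2(1/2)=0$ only to argue that a first component of $p_2$ or $q_2$ is nonzero at $1/2$, which in fact follows from $\gcd(b_2,c_2)=1$ alone via the cross-product identity relating $p_2\times q_2$ to $(a_2,b_2,c_2)$.

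The repair is the paper's argument: seek a combination $\alpha(2u-1)^r(0,p_2)+\beta(2u-1)^r(0,q_2)$; membership of the $B$- and $C$-components in $\cU^r_k$ amounts to the homogeneous system $\alpha p_2^j(1/2)+\beta q_2^j(1/2)=0$ for $j=2,3$, whose determinant is a nonzero multiple of $a(1/2)$. When $a(1/2)=0$ this has a nontrivial solution $(\alpha,\beta)$, and for that solution $\alpha p_2^1(1/2)+\beta q_2^1(1/2)\neq 0$ automatically (otherwise $p_2(1/2)$ and $q_2(1/2)$ would be parallel, forcing $b(1/2)=c(1/2)=0$ and contradicting $\gcd(b_2,c_2)=1$), so the resulting element lies in $\syz_k^{r-1,r,r}\setminus\syz_k^{r,r,r}$; combined with your codimension bound this yields both bullets, the quotient being spanned by the class of this element. (The paper instead reduces an arbitrary element of the difference to this form using the basis of $\syz_k^{r-1,r-1,r-1}$ from Lemma \ref{lem:4.7} and then runs the same determinant computation.) Your exact-sequence detour with $\cQ^{r-1}_{k-1}$ should simply be dropped: as you half-noticed, it would give the $+1$ unconditionally, and its exactness is precisely what fails when $a(1/2)\neq 0$.
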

For the proof of this proposition we need the following lemma that can be proven exactly in the same way as Proposition \ref{basis} above.
\begin{lm}\label{lem:4.7}
The set $\tilde{\cZ}^{r-1}=
{\cZ}'^{r}\bigcup \{(2u-1)^r(0,p_2),(2u-1)^r(0,q_2)\}$ is a basis of $\syz_{k} ^{r-1,r-1,r-1}$
.
\end{lm}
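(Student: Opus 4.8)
The plan is to reproduce the three-step scheme used in the proof of Proposition \ref{basis}: match the cardinality of $\tilde{\cZ}^{r-1}$ with $\dim\syz_k^{r-1,r-1,r-1}$, check that every listed element actually lies in $\syz_k^{r-1,r-1,r-1}$, and then prove linear independence by projecting a vanishing combination onto the two polynomial pieces. Since $\tilde{\cZ}^{r-1}$ has the right cardinality and lives in the right space, freeness will suffice.

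First I would count. Because ${\cZ}'^{r}$ is a basis of $\syz_k^{r,r,r}$, we have $|\tilde{\cZ}^{r-1}|=\dim\syz_k^{r,r,r}+2$. Applying the formula of Proposition \ref{dim} once with regularity $r$ and once with $r-1$, the positive-part terms $(k-\mu_1+1)_+,(k-n_1+\mu_1+e_1)_+,(k-\mu_2+1)_+,(k-n_2+\mu_2+e_2)_+$ are unchanged, and only the terms $-\min(r+1,k)-(r+1)$ change to $-\min(r,k)-r$. In the standing range $k\ge n_1+r$ (with nonconstant gluing data, so $n_1\ge 1$ and $k\ge r+1$) this gives $\dim\syz_k^{r-1,r-1,r-1}-\dim\syz_k^{r,r,r}=2$, whence $|\tilde{\cZ}^{r-1}|=\dim\syz_k^{r-1,r-1,r-1}$. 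Membership is then immediate: a $C^{r}$ triple is a fortiori $C^{r-1}$, so ${\cZ}'^{r}\subset\syz_k^{r,r,r}\subseteq\syz_k^{r-1,r-1,r-1}$, while the two added vectors $(2u-1)^r(0,p_2)$ and $(2u-1)^r(0,q_2)$ have vanishing first piece and a second piece divisible by $(2u-1)^r=(2u-1)^{(r-1)+1}$, so they satisfy the $C^{r-1}$ gluing at $u=\tfrac12$ and lie in $\syz_k^{r-1,r-1,r-1}$ (but not in $\syz_k^{r,r,r}$).

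For independence I would set a vanishing linear combination of the elements of $\tilde{\cZ}^{r-1}$ and split it into a first-piece identity and a second-piece identity, exactly as in the passage leading to \eqref{eq14}--\eqref{eq15}. The decisive point is the order of elimination. In the symmetric basis ${\cZ}'^{r}$ the glued generators carry the clean syzygy $p_1,q_1$ on the first piece, and the two added vectors have zero first piece; hence the first-piece identity involves only multiples $(2u-1)^i p_1$ and $(2u-1)^i q_1$, and freeness of the $\mu$-basis $(p_1,q_1)$ of $\syz_1$ together with the linear independence of the powers $(2u-1)^i$ forces all coefficients of the first-piece-supported generators to vanish. Once these vanish, the glued generators drop out of the second-piece identity, which reduces to a relation among $(2u-1)^i p_2$ and $(2u-1)^i q_2$ coming from the pure second-piece generators of ${\cZ}'^{r}$ and from the two new vectors; freeness of $(p_2,q_2)$ in $\syz_2$ separates it into independent $p_2$- and $q_2$-relations, and distinctness of the powers kills every remaining coefficient.

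The main obstacle is this final distinctness check: I must confirm that the exponent $(2u-1)^r$ contributed by the two added vectors does not coincide with any power appearing in the pure second-piece generators of ${\cZ}'^{r}$. This rests on the structural fact that any element of $\syz_k^{r,r,r}$ supported on a single piece is divisible by $(2u-1)^{r+1}$, so those generators only produce powers $\ge r+1$, whereas the added vectors produce exactly the power $r$. With this observation the distinct-power independence closes the argument, showing that $\tilde{\cZ}^{r-1}$ is a free family of the correct cardinality, hence a basis of $\syz_k^{r-1,r-1,r-1}$.
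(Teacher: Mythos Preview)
Your proposal is correct and follows exactly the approach the paper indicates: the paper's own proof of Lemma~\ref{lem:4.7} is the single sentence ``can be proven exactly in the same way as Proposition~\ref{basis} above,'' and you have carried out precisely that plan---matching cardinality via Proposition~\ref{dim}, checking membership, and proving freeness by splitting into first- and second-piece identities and using that $(p_1,q_1)$ and $(p_2,q_2)$ are free $R$-module generators. Your observation that the symmetric basis $\cZ'^{r}$ places the clean syzygies $p_1,q_1$ on the first piece (so the first-piece identity eliminates the glued and pure-first generators before the second-piece identity handles the rest, with the new power $(2u-1)^r$ distinct from the powers $\ge r+1$ already present) is exactly the mechanism that makes the argument go through.
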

\begin{proof}[Proof of Proposition \ref{difreg}.]
We denote $p_1=(p_1^1,p_1^2,p_1^3)$, and $q_1=(q_1^1,q_1^2,q_1^3)$, where $p_i^j$ and $q_i^j$ are polynomials. Suppose that there exists $(A,B,C)\in \syz_{k}^{r-1,r,r}\setminus \syz_{k}^{r,r,r}$, then by the previous lemma we can choose $(A,B,C)=\alpha (2u-1)^r(0,p_2)+\beta (2u-1)^r(0,q_2)$ with $\alpha, \beta\in\RR$, that is:
$$
\left\lbrace \begin{matrix}
A=\alpha (0,(2u-1)^rp_2^1)+\beta (0,(2u-1)^rq_2^1) \\
B=\alpha (0,(2u-1)^rp_2^2)+\beta (0,(2u-1)^rq_2^2)\\
C=\alpha (0,(2u-1)^rp_2^3)+\beta (0,(2u-1)^rq_2^3)\\
\end{matrix}\right.
$$
But since  $B, C\in \cU^r$, we deduce:
$$ \left\lbrace\begin{matrix}
 (2 u-1)^{r+1}\ \mathrm{divides}\ B_2-B_1=(2 u-1)^r(\alpha p_2^2+\beta q_2^2)\\
(2 u-1)^{r+1}\ \mathrm{divides}\  C_2-C	_1=(2 u -1)^r(\alpha p_2^3+\beta q_2^3) \\
\end{matrix}\right.
$$
This means  that
$$ \left\lbrace\begin{matrix}
\alpha p_2^2(\frac{1}{2})+\beta q_2^2(\frac{1}{2})=0\\
\alpha p_2^3(\frac{1}{2})+\beta q_2^3(\frac{1}{2})=0\\
\end{matrix}\right.
$$
As the determinant of this system is exactly $p_2^2(\frac{1}{2})q_2^3(\frac{1}{2})-p_2^3(\frac{1}{2})q_2^2(\frac{1}{2})=a(\frac{1}{2})$, we deduce the two points of the proposition.
\end{proof}

Lemma \ref{lem:4.7} implies the following proposition:
\begin{prop}\label{demi}
  The dimension of $\syz_{k}^{r-1,r,r}$ is $\tilde{d}_{\tau}(k,r) = d_{\tau}(k,r)+\delta_{\tau}$ with $\delta_{\tau}=1$ if $a(\frac{1}{2})= 0$ and $0$ otherwise. 
\end{prop}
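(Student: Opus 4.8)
The plan is to obtain Proposition~\ref{demi} as an essentially immediate consequence of Proposition~\ref{difreg} together with the dimension count of Proposition~\ref{dim}. For $k\ge \min(n_1,n_2)+r$, Proposition~\ref{dim} gives $\dim(\syz_k^{r,r,r}) = d_\tau(k,r)$, while Proposition~\ref{difreg} states that $\syz_k^{r-1,r,r} = \syz_k^{r,r,r}$ when $a(\frac{1}{2})\ne 0$, and $\dim(\syz_k^{r-1,r,r}) = \dim(\syz_k^{r,r,r})+1$ when $a(\frac{1}{2})=0$. Combining these two facts yields $\dim(\syz_k^{r-1,r,r}) = d_\tau(k,r)+\delta_\tau = \tilde{d}_\tau(k,r)$, which is exactly the asserted formula.

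To make the argument self-contained --- this is the sense in which ``Lemma~\ref{lem:4.7} implies the proposition'' --- I would argue directly from the basis $\tilde{\cZ}^{r-1} = \cZ'^{r}\cup\{(2u-1)^r(0,p_2),(2u-1)^r(0,q_2)\}$ of $\syz_k^{r-1,r-1,r-1}$. Since $\cZ'^{r}$ is itself a basis of $\syz_k^{r,r,r}$, of cardinality $d_\tau(k,r)$, this gives a direct sum decomposition $\syz_k^{r-1,r-1,r-1} = \syz_k^{r,r,r}\oplus\langle(2u-1)^r(0,p_2),(2u-1)^r(0,q_2)\rangle$, and we have the chain $\syz_k^{r,r,r}\subseteq\syz_k^{r-1,r,r}\subseteq\syz_k^{r-1,r-1,r-1}$. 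Writing a general element as $w+\alpha(2u-1)^r(0,p_2)+\beta(2u-1)^r(0,q_2)$ with $w\in\syz_k^{r,r,r}$ and $\alpha,\beta\in\RR$: the $B$- and $C$-components of $w$ are already $C^r$ at $u=\frac{1}{2}$, so such an element lies in $\syz_k^{r-1,r,r}$ precisely when $\alpha p_2^2(\frac{1}{2})+\beta q_2^2(\frac{1}{2})=0$ and $\alpha p_2^3(\frac{1}{2})+\beta q_2^3(\frac{1}{2})=0$, in the notation of the proof of Proposition~\ref{difreg}. Hence $\syz_k^{r-1,r,r}/\syz_k^{r,r,r}$ is isomorphic to the solution space of this $2\times 2$ linear system.

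The one point requiring care --- the main (and only non-routine) step --- is determining the rank of that system. Its determinant $p_2^2(\frac{1}{2})q_2^3(\frac{1}{2})-p_2^3(\frac{1}{2})q_2^2(\frac{1}{2})$ equals $a(\frac{1}{2})$, since the vector of $2\times 2$ minors of a $\mu$-basis $(p_2,q_2)$ of $\syz_2$ is a nonzero constant multiple of $(a_2,b_2,c_2)$ and $a_1(\frac{1}{2})=a_2(\frac{1}{2})=a(\frac{1}{2})$ because $a\in\cU^r$ with $r\ge 0$ (this is the identity already used inside the proof of Proposition~\ref{difreg}). When $a(\frac{1}{2})\ne 0$ the system has only the trivial solution, so $\syz_k^{r-1,r,r}=\syz_k^{r,r,r}$; when $a(\frac{1}{2})=0$ the coefficient matrix is still not identically zero --- the third minor identity together with $\gcd(a_2,c_2)=1$, hence $c_2(\frac{1}{2})\ne 0$, rules out $p_2^2(\frac{1}{2})=q_2^2(\frac{1}{2})=0$ --- so the solution space is exactly one-dimensional. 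In either case $\dim(\syz_k^{r-1,r,r}) = d_\tau(k,r)+\delta_\tau = \tilde{d}_\tau(k,r)$, which proves the proposition.
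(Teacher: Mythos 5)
Your proof is correct and follows essentially the same route as the paper: Proposition \ref{demi} is obtained by combining the dimension formula of Proposition \ref{dim} with Proposition \ref{difreg} (itself resting on the basis of Lemma \ref{lem:4.7}), which is exactly what the text's remark ``Lemma \ref{lem:4.7} implies the following proposition'' intends. Your extra rank argument (using the minor identities of the $\mu$-basis and $\gcd(a_2,c_2)=1$ to exclude a two-dimensional jump when $a(\frac{1}{2})=0$) makes explicit a detail the paper leaves implicit in asserting $\dim(\syz_{k}^{r-1,r,r})=\dim(\syz_{k}^{r,r,r})+1$, and is a welcome refinement.
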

 
\subsection{Separation of vertices}\label{sec:sepvert}
We analyze now the separability of the spline functions on an edge,
that is when the Taylor map at the vertices separate the spline functions.

Let $f=(f_{1},f_{2}) \in \cR(\sigma_{1})\oplus \cR(\sigma_{2})$ of the
form $f_{i}(u_{i},v_{i})= p_{i}
+ q_{i}\, u_{i} + \tilde{q}_{i}\, v_{i} + s_{i} \, u_{i}v_{i}+ r_{i}
\,u_{i}^{2} + \tilde{r}_{i} \, v_{i}^{2}+ \cdots$. Then
\[ 
	T_{\gamma}(f)=[p_{1}, q_{1},\tilde{q}_{1},s_{1},p_{2},
	q_{2},\tilde{q}_{2},s_{2}].
\]
If $f=(f_{1},f_{2})\in \cS^{r}_{k}(\cM_{\tau})$, then 
taking the Taylor expansion of the gluing condition 
\eqref{eq:edgecond}  centered at $u_{1}=0$ yields
\begin{eqnarray}
	{q}_{2} + s_{1}\, u_{1} &=& (\ma(0) +\ma'(0) u_{1}+ \cdots)
	\, ( \tilde{q}_{2} + 2\, \tilde{r}_{2}\, u_{1} + \cdots) \label{eq:exptaylor}\\
	&&+ (\mb(0) +\mb'(0) u_{1}+ \cdots) 
	\, ( q_{2} + s_{2}\, u_{1} +\cdots )\nonumber
\end{eqnarray}
Combining \eqref{eq:exptaylor} with \eqref{eq:edgecond0} yields 
\begin{eqnarray*}
	p_{1} & = & p_{2} \\ 
 	q_{1} & = & \tilde{q}_{2} \\
 	r_{1} & = & \tilde{r}_{2} \\
	\tilde{q}_{1} & = & \ma(0) \, \tilde{q}_{2} + \mb(0)\, q_{2}\\
	s_{1} &=& 2\,\ma (0)\, \tilde{r}_{2} + \mb (0) \, s_{2}
 	+ \ma'(0) \, \tilde{q}_{2}+ \mb'(0) \, q_{2}. 
\end{eqnarray*}
Let $\cH(\gamma)$ be the linear space spanned
by the vectors $[p_{1}, q_{1},\tilde{q}_{1},s_{1},p_{2}, q_{2},\tilde{q}_{2},s_{2}]$,
which are solution of these equations.

If $\ma(0)\neq 0$, it is a space of dimension $5$ otherwise its
dimension is $4$. Thus $\dim \cH(\gamma)=5 -\cross_{\tau}(\gamma)$.
%


 In the next proposition we use  the  notation of the previous section. 
 \begin{prop}
For $k\geq \nu_1+1$  we have $T_\gamma (\cS^{1,r}_{k}(\mathcal{M}_\tau))=\mathcal{H}(\gamma)$. In particular $\dim (T_\gamma  (\cS^{1,r}_{k}(\mathcal{M}_\tau)))=5-c_\tau(\gamma)$.
\end{prop}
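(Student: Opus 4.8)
The plan is to prove the two inclusions $T_\gamma(\cS^{1,r}_k(\cM_\tau))\subseteq \cH(\gamma)$ and $T_\gamma(\cS^{1,r}_k(\cM_\tau))\supseteq \cH(\gamma)$ separately, since the first is essentially immediate and the second is where the work lies. For the forward inclusion, observe that if $f=(f_1,f_2)\in\cS^{1,r}_k(\cM_\tau)$ then the gluing conditions \eqref{eq:edgecond0} and \eqref{eq:edgecond} hold along $\tau$; taking the Taylor expansion of \eqref{eq:edgecond} at $u_1=0$ gives \eqref{eq:exptaylor}, and comparing coefficients of $1$ and $u_1$ (together with \eqref{eq:edgecond0}) produces exactly the linear system defining $\cH(\gamma)$. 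Hence $T_\gamma(f)$ lies in $\cH(\gamma)$, which also gives $\dim T_\gamma(\cS^{1,r}_k(\cM_\tau))\le 5-\cross_\tau(\gamma)$.

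For the reverse inclusion I would argue by exhibiting, for every vector in $\cH(\gamma)$, a global spline in $\cS^{1,r}_k(\cM_\tau)$ whose Taylor data at $\gamma$ realizes it. The natural tool is the map $\Theta_\tau$ of \eqref{eq:deftheta}, whose image consists of the edge-supported splines, together with the freedom in the kernel-of-$D_\tau$ part $E_i$. Concretely, a point of $\cH(\gamma)$ is determined by free parameters among $p_2,q_2,\tilde q_2,\tilde r_2$ (and $s_2$), and I would choose $a_0$, the syzygy triple $(A,B,C)$, and the tails $E_1,E_2$ so that the low-order Taylor coefficients of the resulting $(f_1,f_2)$ match the prescribed values: $a_0$ controls $p_1=p_2$; the value $A(0)$ controls $s_1$ (via $\int_0^{u_1}A$); the values $B(0),C(0)$ relate to $q_2,s_2$ through the $N_1$ terms; and the $E_i$ supply $q_2,\tilde q_2,\tilde r_2$ independently since $\ker D_\tau^{\sigma_i}$ contains the $N_j(u_i)N_{j'}(v_i)$ with appropriate indices untouched by $D_\tau$. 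Here the hypothesis $k\ge \nu_1+1$ enters: it guarantees (via Lemma \ref{lm:syzygy} / Proposition \ref{dim}, since $\nu_1\ge\mu_1$ and the relevant $(k-\mu_1+1)_+$, $(k-n_1+\mu_1+e_1)_+$ summands are then positive, hence $\syz_k$ is nonzero and in fact contains a generator of low enough degree) that there exist syzygies whose evaluations at $0$ take the needed values, so that the constraint relating $s_1$ to $s_2,\tilde r_2,\tilde q_2,q_2$ can be met while keeping the remaining data free.

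The main obstacle is the bookkeeping at the crossing-edge case $\ma(0)=0$: there $\dim\cH(\gamma)=4$, and one must check that the construction above does not accidentally produce more than $4$ independent Taylor vectors, i.e. that the image of $T_\gamma$ collapses by exactly one dimension in accordance with the linear relation $\tilde q_1=\mb(0)q_2$ losing its dependence on $\tilde q_2$. Equivalently, one must verify that the parameter that would have contributed the fifth dimension (the coefficient multiplying $\ma(0)$) is genuinely absent, not merely reparametrized. I would handle this by writing the surjection $\cS^{1,r}_k(\cM_\tau)\to\cH(\gamma)$ explicitly through $\Theta_\tau$ and the $E_i$, and counting: the forward inclusion already caps the dimension at $5-\cross_\tau(\gamma)$, so it suffices to produce $5-\cross_\tau(\gamma)$ linearly independent Taylor vectors, which the chosen $a_0$, $(A,B,C)$ with prescribed $A(0),B(0),C(0)$, and $E_1,E_2$ contributions do by construction. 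Combining both inclusions yields $T_\gamma(\cS^{1,r}_k(\cM_\tau))=\cH(\gamma)$ and the dimension statement $\dim T_\gamma(\cS^{1,r}_k(\cM_\tau))=5-\cross_\tau(\gamma)$.
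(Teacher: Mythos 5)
Your forward inclusion is correct and coincides with the paper's. The reverse inclusion, however, contains a genuine gap, and it begins with a false claim: the tails $E_i\in\ker D_{\tau}^{\sigma_i}$ contribute nothing to $T_{\gamma}$. An element of $\ker D_{\tau}^{\sigma_1}$ has $c^{\sigma_1}_{i,j}=0$ for all $j\le 1$ (and similarly $i\le 1$ on $\sigma_2$), so in particular its four corner coefficients $c_{0,0},c_{1,0},c_{0,1},c_{1,1}$ at $\gamma$ vanish; hence $T_{\gamma}^{\sigma_i}(E_i)=0$ and the $E_i$ cannot ``supply $q_2,\tilde{q}_2,\tilde{r}_2$ independently''. (Your dictionary is also off: from \eqref{eq18}--\eqref{eq19} one finds $p_1=p_2=a_0$, $q_1=\tilde{q}_2=A(0)$, $q_2=B(0)$, $s_2=B'(0)$, $\tilde{q}_1=-C(0)$, $s_1=-C'(0)$ and $\tilde{r}_2=\frac{1}{2}A'(0)$; in particular $A(0)$ controls $q_1=\tilde{q}_2$, not $s_1$.) Consequently every Taylor vector you can realize is a function of $a_0$ and of the values and first derivatives at $u=0$ of a syzygy $(A,B,C)$, which are constrained by $A\,a+B\,b+C\,c=0$; there is no free ``tail'' freedom to invoke.

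This is exactly where the substance of the proof lies and where your argument is missing a step: one must show that, as $(A,B,C)$ ranges over the spline syzygy module, the vector $\bigl(A(0),B(0),C(0),B'(0),C'(0)\bigr)$ sweeps out a space of dimension $4-\cross_{\tau}(\gamma)$, which together with $a_0$ gives $5-\cross_{\tau}(\gamma)$. The mere fact that $\dim\syz_k>0$ for $k\ge\nu_1+1$ (your appeal to Lemma \ref{lm:syzygy}) does not give this: a nonzero module could a priori consist of elements vanishing to high order at $u=0$. The paper closes this gap using the structure results of Section \ref{sec:syz_basis}: every $(A,B,C)$ is written as $a_{1}\bigl((1-2u)^{r+1}p_{1},0\bigr)+P\,(p_1,\tilde{p}_1)+Q\,(q_1,\tilde{q}_1)$ with spline coefficients $P,Q$; then $T_{\gamma}\circ\Theta_{\tau}$ becomes an explicit $6\times 6$ matrix applied to $[a_0,a_1,P(0),Q(0),P'(0),Q'(0)]$, whose rank is shown to be $5-\cross_{\tau}(\gamma)$ via the $\mu$-basis properties of $(p_1,q_1)$ (the argument of Proposition 4.7 in \cite{mourrain_dimension_2016}); the hypothesis $k\ge\nu_1+1$ enters precisely to allow $P,Q$ of degree $\le 1$, so that $P(0),Q(0),P'(0),Q'(0)$ are arbitrary. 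With this rank computation the crossing/non-crossing dichotomy falls out automatically, rather than requiring the separate check you sketch that no fifth dimension ``accidentally'' appears. Without an argument of this type (or an explicit construction of syzygies with prescribed $1$-jets at $0$), your surjectivity claim is unsupported.
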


\begin{proof}
 By construction we have $T_\gamma (\cS^{1,r}_{k}(\mathcal{M}_\tau))\subset \mathcal{H}(\gamma)$. Let us prove that they have the same dimension.
 If $(A,B,C)\in \syz^{r,r,r}_{k}$ with $A=(A_1,A_2)$,$B=(B_1,B_2)$,$C=(C_1,C_2)$, then $(A_1,B_1,C_1)$ is an element of the $R$-module spanned by $p_1=(p_1^1,p_1^2,p_1^3)$, $q_1=(q_1^1,q_1^2,q_1^3)$, ie $(A,B,C)=a_{1} ((1-2u)^{r+1}p_{1},0) +P(p_1,\tilde{p_1})+Q(q_1,\tilde{q_1})$.
 Let $f=(f_1,f_2)=\Theta _\tau(a_{0},(A,B,C))$ (see \eqref{eq:deftheta}), then it is easy to see that:
\begin{align}
	& T_\gamma (f) = \left[
	\begin{array}{c}
 	f_{1}(\gamma)\\
 	\partial_{u_{1}}f_{1}(\gamma)\\ 
 	\partial_{u_{2}}f_{2}(\gamma)\\ 
 	-\partial_{v_{1}}f_{1}(\gamma)\\
	\partial_{u_{2}} \partial_{v_{2}}f_{2}(\gamma)\\ 
	-\partial_{u_{1}} \partial_{v_{1}}f_{1}(\gamma)\\ 
	\end{array}
	\right] \label{matr} \\
	&=
 	\left[
	\begin{array}{cccccc}
 		1 & 0 & 0 &0 &0&0\\
 		0 & p^1_{1}(0) & p^1_{1}(0) & q^1_{1}(0)& 0 &0\\
 		0 & p^2_{1}(0) & p^2_{1}(0) & q^2_{1}(0)& 0 &0\\
 		0 & p^3_{1}(0) & p^3_{1}(0) & q^3_{1}(0)& 0 &0\\
 		0 & {p^2_{1}}'(0)  - 2(r+1) p^2_{1}(0)& {p^2_{1}}'(0) & {q^2_{1}}'(0) & p^2_{1}(0) & q^2_{1}(0)\\
 		0 & {p^3_{1}}'(0)  - 2(r+1) p^3_{1}(0) & {p^3_{1}}'(0) & {q^3_{1}}'(0) & p^3_{1}(0) & q^3_{1}(0)\\
	\end{array}
	\right]
	\left[
	\begin{array}{c}
 		a_{0}\\ 
 		a_{1}\\ 
 		P(0)\\ 
 		Q(0)\\ 
 		P'(0)\\ 
 		Q'(0)\\ 
	\end{array} 
	\right]	\nonumber
      \end{align}
The second column of the matrix is linearly dependent on the third and fifth columns. 
Using the same argument as in the proof of \cite[Proposition 4.7]{mourrain_dimension_2016} on the first and 4 last columns of this matrix, we prove that its rank is $5-c_\tau ^\gamma$. 
By taking $P,Q\in R_{1}$ of degree $\le 1$, which implies that $k\geq \max(\deg(P \, p_{1}), \deg(Q\, q_{1}))=\nu_{1}+1$, the vector $\left[
  a_{0}, 
 		P(0), 
 		Q(0), 
 		P'(0), 
 		Q'(0)\right]$ can take all the values of $\mathbb{R}^5$ and we have $T_\gamma (\cS^{1,r}_{k}(\mathcal{M}_\tau))=\mathcal{H}(\gamma)$. This ends the proof.
\end{proof}





We consider now the separability of the Taylor map at the two end points $\gamma,\gamma'$.
\begin{prop}\label{prop:Hgg}
  Assume that $k\geq \max(\nu_1+2,\nu_2+2, \mu_{1}+r+1, \mu_{2}+r+1)$. Then $T_{\gamma,\gamma'}(\cS^{1,r}_{k}(\mathcal{M}_\tau))= (\mathcal{H}(\gamma),\mathcal{H}(\gamma'))$
  and $\dim T_{\gamma,\gamma'}(\cS^{1,r}_{k}(\mathcal{M}_\tau)) = 10 - \cross_{\tau}(\gamma) -\cross_{\tau}(\gamma')$.
\end{prop}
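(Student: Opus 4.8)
The plan is to combine the single-vertex surjectivity result (the preceding proposition, $T_\gamma(\cS^{1,r}_k(\cM_\tau)) = \cH(\gamma)$) with the structure of the syzygy module to show that the Taylor data at the two endpoints can be prescribed independently. The inclusion $T_{\gamma,\gamma'}(\cS^{1,r}_k(\cM_\tau)) \subseteq (\cH(\gamma),\cH(\gamma'))$ is automatic from the definition of differentiable functions, so the content is surjectivity; equivalently, it suffices to exhibit, for each of the two vertices, a subspace of $\cS^{1,r}_k(\cM_\tau)$ that surjects onto the full $\cH$ at that vertex while being killed by the Taylor map at the other vertex. Then the sum of these two subspaces surjects onto the product, and the dimension count $10 - \cross_\tau(\gamma) - \cross_\tau(\gamma')$ follows from $\dim\cH(\gamma) = 5 - \cross_\tau(\gamma)$ (established just above).

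First I would recall, from the proof of the previous proposition, that an element of $T_\gamma(\cS^{1,r}_k(\cM_\tau))$ reaching all of $\cH(\gamma)$ is produced by $\Theta_\tau(a_0,(A,B,C))$ where $(A,B,C) = a_1((1-2u)^{r+1}p_1,0) + P(p_1,\tilde p_1) + Q(q_1,\tilde q_1)$ with $P,Q$ of degree $\le 1$; the parameters $a_0, P(0), Q(0), P'(0), Q'(0)$ fill $\R^5$ (projecting onto $\R^{5-\cross_\tau(\gamma)}$). The key observation is that $T_\gamma$ only sees the value and first derivative of the syzygy components at $u=0$, and the constant $a_0$; symmetrically $T_{\gamma'}$ only sees values and derivatives at $u=1$. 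So to build a function supported (in the Taylor sense) only at $\gamma$, I want to multiply the degree-$\le 1$ polynomials $P,Q$ by a factor vanishing to high order at $u=1$ without disturbing their $1$-jet at $u=0$: concretely replace $P(u)$ by $P(u)(1-u^2)^N$ type corrections, or more cleanly use that within the syzygy module there is enough room (by the degree hypotheses $k \ge \nu_j+2$, $k \ge \mu_j + r + 1$) to adjust a given syzygy by elements of $\langle (2u-1)^{r+1}p_j, (2u-1)^{r+1}q_j,\ldots\rangle$ and by the generators $\cZ^r_i$ so that the resulting syzygy has prescribed $1$-jet at $0$ and vanishing $1$-jet at $1$. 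The extra "$+2$" over the "$+1$" in the previous proposition is exactly what lets $P,Q$ have one more degree of freedom to kill the jet at the far endpoint while still controlling the jet at the near one; the conditions $k \ge \mu_j + r + 1$ ensure the relevant $(2u-1)^{r+1}$-multiples of $p_j, q_j$ actually lie in $\syz^{r,r,r}_k$.

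Concretely the steps are: (1) state the inclusion $\subseteq$ and reduce to surjectivity; (2) fix the vertex $\gamma$ and, using the $\mu$-basis $(p_1,q_1)$ and $(p_2,q_2)$ together with Proposition \ref{basis}, write down an explicit subspace $V_\gamma \subseteq \syz^{r,r,r}_k$ consisting of syzygies whose components have a prescribed value-and-derivative profile at $u=0$ but whose components and their derivatives all vanish at $u=1$ — here one uses that multiplying a $\mu$-basis element by $(1-u)^2$ costs $2$ in degree, hence the hypothesis $k\ge \nu_j+2$ (and $k\ge\mu_j+2$, absorbed in $\mu_j+r+1$ since $r\ge 1$... actually one should check $r\ge 0$ suffices by using the $(2u-1)^{r+1}$-shift), so that $\Theta_\tau$ of these syzygies, together with the constant term $a_0$, lands in $\ker T_{\gamma'}$; (3) invoke the matrix computation from the previous proposition to see that $T_\gamma$ restricted to $\Theta_\tau(\R \times V_\gamma)$ still has image $\cH(\gamma)$, because the five relevant parameters are unaffected by the far-endpoint vanishing modification; (4) do the same at $\gamma'$ with a subspace $V_{\gamma'}$ in $\ker T_\gamma$; (5) conclude $T_{\gamma,\gamma'}$ surjects onto $(\cH(\gamma),\cH(\gamma'))$ from the two one-sided surjectivities, and read off the dimension.

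The main obstacle I anticipate is step (2)–(3): verifying that one can simultaneously (a) prescribe the $1$-jet at $u=0$ to hit all of $\cH(\gamma)$ and (b) force vanishing of the full relevant data at $u=1$, while (c) staying inside $\syz^{r,r,r}_k$ and inside degree $\le k$. This is a counting/independence argument analogous to, but more delicate than, the single-vertex case, and it is where all four inequalities in the hypothesis are consumed — one must check that the "localization at one endpoint" does not accidentally collapse a dimension, i.e. that the analogue of the rank-$5-\cross_\tau(\gamma)$ matrix computation of the previous proof still goes through after imposing the far-endpoint constraints. I would handle this by choosing the correcting multipliers (powers of $(1-u)$, or of $(2u-1)$, times $\md_0,\md_1$-type Hermite factors) so that they vanish to order $\ge 2$ and have trivial effect at the near endpoint by construction, making the decoupling transparent rather than relying on a fresh rank computation.
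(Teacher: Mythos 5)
Your overall route is the paper's: reduce to showing that $(\cH(\gamma),0)$ and $(0,\cH(\gamma'))$ each lie in the image of $T_{\gamma,\gamma'}\circ\Theta_\tau$, by taking syzygies of the form $a_1((1-2u)^{r+1}p_1,0)+P(p_1,\tilde p_1)+Q(q_1,\tilde q_1)$ with Hermite-type coefficients $P,Q$ whose $1$-jets are prescribed at one endpoint and vanish at the other, and then reusing the rank computation of the single-vertex proposition. But there is a genuine gap in your ``key observation'': it is not true that $T_{\gamma'}$ only sees the values and derivatives of the syzygy at $u=1$. From \eqref{eq18}--\eqref{eq19}, the value of $f=\Theta_\tau(a_0,(A,B,C))$ at $\gamma'$ is $a_0+\int_0^1 A(t)\,dt$, a global integral functional of $A$, not a jet at $u=1$. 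Hence your subspace $V_\gamma$ (jets prescribed at $0$, vanishing jets at $1$) does \emph{not} land in $\ker T_{\gamma'}$: after setting $P(1)=P'(1)=Q(1)=Q'(1)=0$ you are still left with the condition $a_0+\int_0^1 A=0$, and since $a_0$ is exactly the value of $f$ at $\gamma$, imposing this condition without an extra degree of freedom couples the value coordinate of $\cH(\gamma)$ to $P,Q$ and threatens to lose precisely one dimension — the delicate collapse you yourself flagged in steps (2)--(3) but did not resolve.

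The paper closes this hole with the term $a_1((1-2u)^{r+1}p_1,0)$: its contribution to $T_\gamma$ is a column linearly dependent on the columns of $P(0),P'(0)$ (so it does not change the image at $\gamma$), while its only effect at $\gamma'$ is the shift $t_1a_1$ with $t_1=\int_0^{1/2}(1-2u)^{r+1}p_1^1\,du$ in the value; one then solves $a_0+t_1a_1+L_1(P)+L_2(Q)=0$, where $L_1(P)=\int_0^1P\,\tilde p_1^1\,du$ and $L_2(Q)=\int_0^1Q\,\tilde q_1^1\,du$. This is exactly the role of the hypothesis $k\ge\mu_1+r+1$ (and symmetrically $\mu_2+r+1$), which in your write-up is mentioned only as ensuring the $(2u-1)^{r+1}$-multiples lie in $\syz^{r,r,r}_k$, with no role assigned to them. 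Without this mechanism (or some substitute element with zero $1$-jet contribution at both vertices but nonzero integral), your construction does not yield all of $(\cH(\gamma),0)$. A secondary, fixable point: multiplying a degree-$\le 1$ polynomial $P$ by $(1-u)^2$ exceeds the degree budget $\nu_j+2$; the paper instead takes $P,Q$ in the spline space $\cU^r_2$ and uses the interior knot to interpolate the four Hermite conditions within degree $2$.
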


\begin{proof}
The inclusion $T_{\gamma,\gamma'}(\cS^{1,r}_{k}(\mathcal{M}_\tau))\subseteq (\mathcal{H}(\gamma),\mathcal{H}(\gamma'))$ is clear by construction.
For the converse,  we show that the image of $T_{\gamma,\gamma'} \circ \Theta_{\tau}$ contains $(\mathcal{H}(\gamma),0)$ and then by symmetry we have that $(0,\mathcal{H}(\gamma ))$ is in the image of $T_{\gamma,\gamma'}\circ \Theta_{\tau}$. Let $f=(f_1,f_2)=\Theta_\tau(a_{0},(A,B,C))\in \mathcal{S}^{1,r}_k(\mathcal{M}_\tau)$ with
  $(A,B,C)=a_{1} ((1-2u)^{r+1}p_{1},0) +P(p_{1},\tilde{p_1})+Q(q_{1},\tilde{q}_1)$ and $P,Q\in \cU_{2}^{r}$.
  The image of $f$ by $T_{\gamma}$ is of the form  \eqref{matr}. The image of $f$ by $T_{\gamma'}$ is of the form
\begin{align*}
& T_{\gamma'} (f) = \left[
	\begin{array}{c}
 	f_{1}(\gamma')\\
 	\partial_{u_{1}}f_{1}(\gamma')\\ 
 	\partial_{u_{2}}f_{2}(\gamma')\\ 
 	-\partial_{v_{1}}f_{1}(\gamma')\\
	\partial_{u_{2}} \partial_{v_{2}}f_{2}(\gamma')\\ 
	-\partial_{u_{1}} \partial_{v_{1}}f_{1}(\gamma')\\ 
	\end{array}
	\right]\\
	&=
 	\left[
	\begin{array}{cccccc}
	1 & t_{1} & 0  & 0   &0&0\\
 	0 & 0 & \tilde{p}^1_{1}(1) & \tilde{q}^1_{1}(1)& 0 &0\\
 	0 & 0 & \tilde{p}^2_{1}(1) & \tilde{q}^2_{1}(1)& 0 &0\\
 	0 & 0 & \tilde{p}^3_{1}(1) & \tilde{q}^3_{1}(1)& 0 &0\\
 	0 & 0 & {\tilde{p}^{2}_{1}}\!\,'(1) & {\tilde{q}^{2}_{1}}\!\,'(1) & \tilde{p}^2_{1}(1) & \tilde{q}^2_{1}(1)\\
 	0 & 0 & {\tilde{p}^3_{1}}\!\,'(1) & {\tilde{q}^3_{1}}\!\,'(1) & \tilde{p}^3_{1}(1) & \tilde{q}^3_{1}(1)\\
	\end{array}
	\right]
	\left[
          \begin{array}{c}
 		a_{0}\\ 
                a_{1} \\ 
 		P(1)\\ 
 		Q(1)\\ 
 		P'(1)\\ 
          Q'(1)\\
	\end{array} 
      \right]
      + \left[
	\begin{array}{c}
          L_{1}(P) + L_{2}(Q)\\
          0 \\
          0 \\
          0 \\
          0 \\
          0 \\
	\end{array}
	\right]
\end{align*}
      with $t_{1}=\int_{0}^{1/2} (1-2u)^{r+1}p_{1}^{1} du$, $L_{1}(P)=\int_{0}^{1} P\,\tilde{p}_{1}^{1} du$,  $L_{2}(Q)=\int_{0}^{1} Q\,\tilde{q}_{1}^{1} du$.
      By choosing $P(1)=P'(1)=Q(1)=Q'(1)=0$ and $a_{0}+t_{1}a_{1}=0$, we have an element in the kernel of this matrix. By choosing $a_{0}, P(0), P'(0), Q(0), Q'(0)$ and $a_{1}$ such that $a_{0}+t_{1}a_{1}+L_{1}(P)+ L_{2}(Q)=0$, we can find a solution to the system \eqref{matr} for any $f\in \cS_{k}(\cM_{\tau})$.
      Therefore, constructing spline coefficients $P,Q\in \cU_{2}^{r}$ which interpolate  prescribed values and derivatives at $0,1$, we can construct spline functions $f\in \cS_{k}(\cM_{\tau})$ such that $T_{\gamma}(f)$ span $\cH(\gamma)$ and $T_{\gamma'}(f)=0$. The degree of the spline is $k\geq \max(\nu_{1}+2, \mu_{1}+r+1)$.
By symmetry, for $k\geq \max(\nu_{2}+2, \mu_{2}+r+1)$, we have  $(0,\cH(\gamma'))\subset T_{\gamma,\gamma'}(\cS_{k}^{1}(\cM_{\tau})$, which concludes the proof.
\end{proof}

\begin{df} The separability $\ms(\tau)$ of the edge $\tau$ is the minimal
  $k$ such that $T_{\gamma,\gamma'}(\cS^{1,r}_{k}(\cM_{\tau}))=
	(T_{\gamma}(\cS^{1,r}_{k}(\mathcal{M}_{\tau})), T_{\gamma'}(\cS^{1,r}_{k}(\mathcal{M}_{\tau})))$.
      \end{df}
 The previous proposition shows that $\ms(\tau)\leq \max(\nu_1+2,\nu_2+2, \mu_{1}+r+1, \mu_{2}+r+1)$.
\subsection{Decompositions and dimension} \label{sec:polybasis}

Let $\tau\in\cM_{1}$ be an interior edge $\tau$ shared by the cells
$\sigma_{0}, \sigma_{1}\in \cM_{2}$. 
The Taylor map along the edge $\tau$ of $\cM_{\tau}$ is
\begin{align*}
D_{\tau}\colon \cR_{k}(\sigma_{0})\oplus \cR_{k}(\sigma_1)&\rightarrow\cR_{k}(\sigma_{0})\oplus \cR_{k}(\sigma_1)\\
(f_{0}, f_{1})&\mapsto (D_{\tau}^{\sigma_0} (f_0), D_{\tau}^{\sigma_1} (f_1)\bigr).
\end{align*}
Its image is the set of splines of  $\cR_{k}^r({\sigma_{1}})\oplus \cR_{k}^r({\sigma_{2}})$
with {\em support along} $\tau$.
The kernel is the set of splines of  $\cR_{k}^r({\sigma_{1}})\oplus \cR_{k}^r({\sigma_{2}})$
with vanishing b-spline coefficients {along} the edge $\tau$. The elements of $\ker(D_{\tau})$ are smooth splines in $\cS^{r}_{k}(\cM_{\tau})$.
Let $W_{k}(\tau)=D_{\tau}(\cS^{r}_{k}(\cM_{\tau}))$. It is the set of splines in $\cS^{r}_{k}(\cM_{\tau})$ with a support along $\tau$.  
As $D_{\tau}$ is a projector, we have the decomposition
\begin{equation} \label{eq:S}
\cS^{r}_{k}(\cM_{\tau}) = \ker(D_{\tau}) \oplus W_{k}(\tau).
\end{equation}
From the relations \eqref{eq18} and \eqref{eq19}, we deduce that $W_{k}(\tau)=\im \Theta_{\tau}$.
%
%
%
Since $\Theta_{\tau}$ is injective, thus $\dim(W_{k}(\tau))=\dim \syz^{r,r,r}_{k-1} +1 =d_{\tau}(k,r)+1$ and 
$W_{k}(\tau) \neq \{0\}$ when $k\geq \mu_1 $ and $k\geq \mu_2 $
(Lemma (\ref{lm:syzygy}) (iii)). 


The map $T_{\gamma,\gamma'}$ defined in Section~\ref{sec:tay} induces the
exact sequence
\begin{equation} \label{eq:Kexact}
	0 \rightarrow \cK_{k}(\tau) \rightarrow \cS_{k}^{1,r}(\cM_{\tau}) 
	\stackrel{T_{\gamma,\gamma'}}{\longrightarrow} \cH(\tau) \rightarrow 0
\end{equation}
where $\cK_{k}(\tau)= \ker(T_{\gamma,\gamma'})$
and $\cH(\tau)=T_{\gamma,\gamma'}(\cS_{k}^{1,r}(\cM_{\tau}))$.
\begin{df}\label{def:edgespline}
	For an interior edge $\tau\in \cM_{1}^{o}$, let 
	$\cE_{k}(\tau)=\ker (T_{\gamma,\gamma'})\cap W_{k}(\tau)= \ker (T_{\gamma,\gamma'})\cap \im D_{\tau}$ be the
	set  of splines in $\cS_{k}^{r}(\cM_{\tau})$ with their 
	support along $\tau$ and with vanishing 
	Taylor expansions at $\gamma$ and $\gamma'$.
	For a boundary edge $\tau'=(\gamma,\gamma')$, which belongs to a face $\sigma$, we 
	also define $\cE_{k}(\tau')$ as the set of elements of $\cR_{k}^r(\sigma)$
	with their  support along $\tau'$ and with vanishing 
	Taylor expansions at $\gamma$ and $\gamma'$.
\end{df}
Notice that the elements of $\cE_{k}(\tau)$ have their support along
$\tau$ and that their Taylor expansion at $\gamma$ and $\gamma'$
vanish. Therefore, their Taylor expansion along all (boundary) edges of
$\cM_{\tau}$ distinct from $\tau$ also vanish.

As $\ker(D_{\tau})\subset \cK_{k}(\tau)$, we have the decomposition
\begin{equation} \label{eq:K}
\cK_{k}({\tau}) = \ker(D_{\tau})\oplus \cE_{k}(\tau).
\end{equation} 
We deduce the following result 
\begin{lm}\label{lm:dim:E}For an interior edge $\tau\in \cM_{1}^{o}$ and for $k\ge \ms(\tau)$, 	the dimension of $\cE_{k}(\tau)$ is 	\[ 		\dim \cE_{k}(\tau) = \tilde{d}_{\tau}(k,r) -9 + \cross_{\tau}(\gamma) +\cross_{\tau}(\gamma'). 	\]
\end{lm}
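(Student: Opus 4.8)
The plan is to extract the dimension of $\cE_k(\tau)$ from the two direct-sum decompositions and the exact sequence already assembled in this section. The starting point is the decomposition \eqref{eq:S}, $\cS^r_k(\cM_\tau) = \ker(D_\tau) \oplus W_k(\tau)$, together with the decomposition \eqref{eq:K}, $\cK_k(\tau) = \ker(D_\tau) \oplus \cE_k(\tau)$, where $\cK_k(\tau) = \ker(T_{\gamma,\gamma'})$. Since $\ker(D_\tau)$ is a common summand, subtracting gives that $\cE_k(\tau)$ is, dimensionally, $\dim \cK_k(\tau) - \dim\ker(D_\tau) = \dim \cS^{1,r}_k(\cM_\tau) - \dim \cH(\tau) - \dim\ker(D_\tau)$, using the exact sequence \eqref{eq:Kexact}. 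The cleaner route, though, is to intersect everything with $W_k(\tau)$: since $\ker(D_\tau)\cap W_k(\tau) = 0$ (as $D_\tau$ is a projector with image $W_k(\tau)$), restricting \eqref{eq:K} to $W_k(\tau)$ shows $\cK_k(\tau)\cap W_k(\tau) = \cE_k(\tau)$ by Definition~\ref{def:edgespline}, and restricting the exact sequence \eqref{eq:Kexact} to $W_k(\tau)$ gives
\[
0 \rightarrow \cE_k(\tau) \rightarrow W_k(\tau) \xrightarrow{\;T_{\gamma,\gamma'}\;} T_{\gamma,\gamma'}(W_k(\tau)) \rightarrow 0.
\]
Hence $\dim\cE_k(\tau) = \dim W_k(\tau) - \dim T_{\gamma,\gamma'}(W_k(\tau))$.

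The two quantities on the right are both known for $k$ large enough. First, $\dim W_k(\tau) = \tilde d_\tau(k,r) + 1$: indeed $W_k(\tau) = \im\Theta_\tau$, $\Theta_\tau$ is injective, and its source is $\RR\times\syz^{0,1,1}_{k}$, whose dimension is $1 + \dim\syz^{0,1,1}_{k}$. Here one must be careful about which syzygy regularities appear — the source of $\Theta_\tau$ is $\RR\times\syz^{0,1,1}_k$ in \eqref{eq:deftheta}, and the relevant count is exactly $\dim\syz^{r-1,r,r}_{k}$ type data, which by Proposition~\ref{demi} equals $\tilde d_\tau(k,r) = d_\tau(k,r) + \delta_\tau$; I would spell out the matching of indices $(0,1,1)$ with the shifted-regularity case $r=1$, or more precisely invoke the identification made just before \eqref{eq:deftheta} that $A,B,C$ have regularities $C^0,C^1,C^1$, so that the pertinent dimension is $\tilde d_\tau$ rather than $d_\tau$. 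Second, $\dim T_{\gamma,\gamma'}(W_k(\tau))$: for $k\ge\ms(\tau)$, Proposition~\ref{prop:Hgg} gives $T_{\gamma,\gamma'}(\cS^{1,r}_k(\cM_\tau)) = (\cH(\gamma),\cH(\gamma'))$ of dimension $10 - \cross_\tau(\gamma) - \cross_\tau(\gamma')$; since $T_{\gamma,\gamma'}$ vanishes on $\ker(D_\tau)$ (elements of $\ker D_\tau$ have vanishing b-spline coefficients along $\tau$, in particular the $c_{0,0},c_{1,0},c_{0,1},c_{1,1}$ entries at each endpoint are zero), the decomposition \eqref{eq:S} gives $T_{\gamma,\gamma'}(W_k(\tau)) = T_{\gamma,\gamma'}(\cS^{1,r}_k(\cM_\tau))$, so $\dim T_{\gamma,\gamma'}(W_k(\tau)) = 10 - \cross_\tau(\gamma) - \cross_\tau(\gamma')$.

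Putting these together,
\[
\dim\cE_k(\tau) = \bigl(\tilde d_\tau(k,r) + 1\bigr) - \bigl(10 - \cross_\tau(\gamma) - \cross_\tau(\gamma')\bigr) = \tilde d_\tau(k,r) - 9 + \cross_\tau(\gamma) + \cross_\tau(\gamma'),
\]
which is the claimed formula. The range $k\ge\ms(\tau)$ is exactly what is needed for Proposition~\ref{prop:Hgg} to apply (and $\ms(\tau)$ is already bounded above by $\max(\nu_1+2,\nu_2+2,\mu_1+r+1,\mu_2+r+1)$), while the other hypotheses ($k\ge n_1+r$ etc.) used in computing $\tilde d_\tau$ and $\dim W_k(\tau)$ are subsumed for $k$ big enough.

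**Main obstacle.** The genuinely delicate point is the bookkeeping of regularities in $\dim W_k(\tau)$: one must confirm that the constraint $f_1 - f_2 \equiv 0 \pmod{(2u-1)^{r+1}}$ on $C^r$ splines, transported through $\Theta_\tau$ and the relations \eqref{eq18}--\eqref{eq19}, produces a syzygy space of dimension $\tilde d_\tau(k,r)$ and not $d_\tau(k,r)$ — i.e. that the extra degree of freedom $\delta_\tau$ from Proposition~\ref{demi} is indeed present here because the first component $A$ lives in $\cU^0_{k-1}$ (a relaxed regularity). Once that identification is pinned down, the rest is a two-line dimension count; the subtler verification that $T_{\gamma,\gamma'}$ kills $\ker(D_\tau)$ is immediate from the definition of the Taylor map in terms of the first b-spline coefficients at each vertex.
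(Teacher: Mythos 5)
Your argument is correct and follows essentially the same route as the paper: both reduce the count to $\dim \cE_{k}(\tau) = \dim W_{k}(\tau) - \dim \cH(\tau)$ via the decompositions \eqref{eq:S}, \eqref{eq:K} and the exact sequence \eqref{eq:Kexact}, and then invoke Proposition \ref{prop:Hgg} together with the injectivity of $\Theta_{\tau}$. Your explicit remark that the relevant syzygy space is $\syz^{r-1,r,r}$ (so that $\dim W_{k}(\tau)=\tilde{d}_{\tau}(k,r)+1$ by Proposition \ref{demi}, not $d_{\tau}(k,r)+1$ as stated earlier in Section \ref{sec:polybasis}) is a useful clarification that makes the stated formula with $\tilde{d}_{\tau}$ come out consistently.
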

\begin{proof}
From the relations \eqref{eq:S},  \eqref{eq:Kexact} and \eqref{eq:K}, we have
  \begin{eqnarray*}
    \dim \cE_{k}(\tau) & = &\dim \cK_{k}(\tau) - \dim  \ker(D_{\tau})\\
      & =& \dim \cS_{k}^{1,r}(\cM_{\tau})  - \dim \cH_{k}(\tau) - \dim \cS_{k}^{1,r}(\cM_{\tau}) +\dim W_{k}(\tau)\\
      & =& \dim W_{k}(\tau) - \dim \cH_{k}(\tau),
  \end{eqnarray*}
which gives the formula using Proposition \ref{prop:Hgg}.
\end{proof}
\begin{remark}\label{rem:boundary:edge}
	When $\tau$ is a boundary edge, which belongs to the face $\sigma\in
	\cM_{2}$, we have $\cS_{k}^r(\cM_{\tau})=\cR_{k}^r(\sigma)$
        and $\dim \cE_{k}(\tau) = 2(m+1) -8= 4 k- 2r -6$.

\end{remark}
 
\subsection{Basis functions associated to an edge}\label{sec:basis_edge}
Suppose that $\cB_k^r=\{\beta_i\}_{i=0...l}$ with $l=\dim \syz ^{r-1,r,r}_{k-1} $ and $\beta_i=(\beta_i^1,\beta_i^2,\beta_i^3)$, is a basis of $\syz ^{r-1,r,r}_{k-1} $. We know also that $\cE _k =\{f=\Theta_{\tau}(a_0,(A,B,C)): T_{\gamma,\gamma'}(f)=0, (A,B,C) \in \syz ^{r-1,r,r}_{k-1}\}$, but we have:
\begin{align*}
&T_{\gamma ,\gamma'}(f)=\begin{pmatrix}
T_{\gamma }\\
T_{\gamma '}\\
\end{pmatrix}\\
& = \begin{pmatrix}
\hspace{2.2cm} c_{0} ,A(0),-C(0),-C'(0),\hspace{2.3cm} c_{0},B(0),A(0),B'(0)\\
c_{0} + \int_{0}^{1}A(u) du , A(1),-C(1),\hspace{0.4cm} C'(1),c_{0}+\int_{0}^{1}A(u) du,B(1),A(1),B'(1)\\
\end{pmatrix}
\end{align*}
Suppose that $(A,B,C)=\bigl(\sum b_i \beta_i^1, \sum b_i \beta_i^2,\sum b_i \beta_i^3\bigr)$ with $ b_i \in \RR$, then $ T_{\gamma,\gamma'}(f)=0$ is equivalent to the system:
\begin{equation}\label{eq:B1}
\left\lbrace \begin{matrix}
a_0=0\\
\sum b_i \beta^1_i(0)=0\\
\sum b_i \beta^2_i(0)=0\\
\sum b_i \beta^3_i(0)=0\\
\sum b_i \beta^{2'}_i(0)=0\\
\sum b_i \beta^{3'}_i(0)=0\\
\sum b_i \int_0^1\beta_i(t)dt=-a_0\\
\end{matrix}      
\right.
\left\lbrace 
\begin{matrix}
\sum b_i \beta^1_i(1)=0\\
\sum b_i \beta^2_i(1)=0\\
\sum b_i \beta^3_i(1)=0\\
\sum b_i \beta^{2'}_i(1)=0\\
\sum b_i \beta^{3'}_i(1)=0\\
\end{matrix}      \right.
\end{equation}
The system \eqref{eq:B1} directly depends on the gluing data  \eqref{eq:transmap} along the edge via equations \eqref{eq18} and \eqref{eq19}, see Section \ref{sec:relation_syz} above. An explicit solution requires the computation of a basis for the syzygy module, which is constructed in Section \ref{sec:syz_basis}. 
The image by $\Theta_{\tau}$ (defined in \eqref{eq:deftheta})
of a basis of the solutions of this system yields a basis of $ \cE_k$. 
\section{Splines around a vertex}
In this section,  we analyse the spline functions, attached to a vertex, that is, the spline functions which Taylor expansions along the edges around the vertex vanish. We analyse the image of this space by the Taylor map at the vertex, and construct a set of linearly independant spline functions, which images span the image of the Taylor map. These form the set of basis functions, attached to the vertex.

Let us consider a topological surface $\cM_\gamma$ composed by quadrilateral faces $\sigma_1,\dots,\sigma_{F(\gamma)}$ sharing a single vertex $\gamma$, and such that the faces $\sigma_{i}$ and $\sigma_{i-1}$ have a common edge $\tau_{i}=(\gamma,\delta_{i})$, for $i=2,\dots,F(\gamma)$. 
If $\gamma$ is an interior vertex then we identify the indices modulo $F(\gamma)$ and $\tau_1$ is the common edge of $\sigma_{F(\gamma)}$ and $\sigma_1$, see Fig.~\ref{fig:5quad}.

\begin{figure}[ht]
\begin{center}
\includegraphics[width=4.5cm]{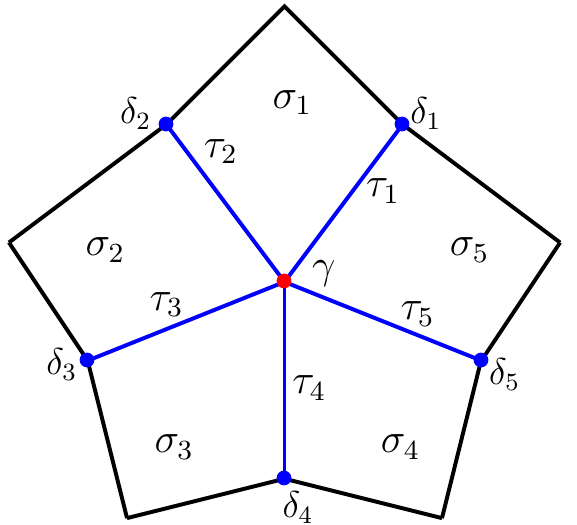}
\end{center} 
\vspace{-0.2cm}
\caption{Topological surface $\cM_\gamma$ composed by $F(\gamma)=5$ quadrilateral faces glued around the vertex $\gamma$.}\label{fig:5quad}
\end{figure}

The gluing data attached to each of the edges $\tau_i$ will be denoted by $\mathfrak{a_i}=\frac{a_i}{c_i}$, $\mathfrak{b_i}=\frac{b_i}{c_i}$. 
By a change of coordinates we may assume that $\gamma$ is at the origin $(0,0)$, and the edge $\tau_i$ is on the line $v_{i}=0$, where $(u_{i-1},v_{i-1})$ and $(u_i,v_i)$ are the coordinate systems associated to $\sigma_{i-1}$ and $\sigma_{i}$, respectively. 
Then the transition map at $\gamma$ across $\tau_i$ from $\sigma_i$ to $\sigma_{i-1}$ is as given by
\[
\phi_{\tau_i}\colon (u_i,v_i)\rightarrow 
\begin{pmatrix}
v_i\mathfrak{b_i}(u_i)\\
u_i + v_i \mathfrak{a}_i(u_i)
\end{pmatrix};
\]
following the notation in \eqref{eq:transmap}, we have $\phi_{\tau_i}=\phi_{i-1,i}$.


The restriction along the boundary edges of $\cM_\gamma$ is defined by
\begin{align*}
D_{\gamma}\colon \bigoplus_{i=1}^{F(\gamma)} \cR(\sigma_i)&\rightarrow \bigoplus_{\substack{{\tau\in\partial\cM_\gamma}\\ \tau\not\ni \gamma}}\cR^{\sigma_i}(\tau)\\
(f_i)_{i=1}^{F(\gamma)}&\mapsto \bigl(D_{\tau}^{\sigma_i} (f_i)\bigr)_{\tau\not\ni\gamma}
\end{align*}
where $D_\tau^{\sigma_i}$ is the Taylor expansion along $\tau$ on $\sigma_i$, see Section \ref{sec:tay}. 

Let $\cV_k(\gamma)$ be the set of spline functions of degree $\leq k$ on $\cM_\gamma$ that vanish at the first order derivatives along the boundary edges:
\begin{equation}\label{eq:Vgamma}
\cV_k(\gamma) =\ker D_{\gamma}\cap \cS_k^{1}(\cM_\gamma).
\end{equation}
The gluing data and the differentiability conditions in \eqref{eq:edgecond} lead to conditions on the coefficients of the Taylor expansion of  $f_i$, namely
\begin{equation}\label{eq:Texp}
f_i(u_i,v_i) = p + q_i u_i + q_{i+1} v_i + s_i u_iv_i + r_i u_i^2 + r_{i+1} v_i^2+\cdots
\end{equation}
with $p,q_i,s_i,r_i\in\R$, and for $i=2,\dots,F$ the following two conditions are satisfied 
\begin{align}
q_{i+1} & = \mathfrak{a}_i(0)q_i + \mathfrak{b}_i(0) q_{i-1}\label{eq:conditions_gamma1}\\
s_i & = 2\mathfrak{a}_i(0)r_i + \mathfrak{b}_i(0)s_{i-1} + \mathfrak{a}'_{i}(0)q_i + \mathfrak{b}'_i(0) q_{i-1}. \label{eq:conditions_gamma2}
\end{align}
Let $\mathcal{H}(\gamma)$ be the space spanned by the vectors $\mathbf{h}= [p,q_1,\dots,q_{F(\gamma)},s_1,\dots,s_{F(\gamma)}]$ such that $p$, $q_1,\dots,q_{F(\gamma)}$, $s_1,\dots,s_{F(\gamma)}$, $r_1,\dots,r_{F(\gamma)}\in \R$ give a solution for \eqref{eq:conditions_gamma1} and \eqref{eq:conditions_gamma2}.
The following result was proved in \cite[Proposition 5.1]{mourrain_dimension_2016} in the case of polynomial splines. 
\begin{prop}\label{prop:dim_H}
For a topological surface $\cM_\gamma$ consisting of $F(\gamma)$ quadrangles glued around an interior vertex $\gamma$, 
\[\dim \cH(\gamma) = 3+F(\gamma)-\sum_{\tau\ni \gamma}\mathfrak{c}_\tau(\gamma) + \mathfrak{c}_+(\gamma),\] 
where $\mathfrak{c}_\tau(\gamma)$, $\mathfrak{c}_+(\gamma)$ are as in Definition \ref{def:crossing}. 
\end{prop}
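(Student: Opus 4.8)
The vectors $\mathbf h=[p,q_1,\dots,q_F,s_1,\dots,s_F]$ spanning $\cH(\gamma)$ are characterized purely by the linear relations \eqref{eq:conditions_gamma1}--\eqref{eq:conditions_gamma2}, and these involve only the $1$-jets $\ma_i(0),\ma'_i(0),\mb_i(0),\mb'_i(0)$ of the gluing data at $\gamma$; these are data of the same nature as in the polynomial setting, so the computation is a purely linear-algebraic one, formally identical to \cite[Proposition 5.1]{mourrain_dimension_2016}. The plan is to recover it as follows. The coordinate $p$ is unconstrained and decoupled from the rest, so $\dim\cH(\gamma)=1+\dim\cH_0(\gamma)$ where $\cH_0(\gamma)$ is the span of the truncated vectors $[q_1,\dots,q_F,s_1,\dots,s_F]$ satisfying \eqref{eq:conditions_gamma1}--\eqref{eq:conditions_gamma2} for some $r_1,\dots,r_F\in\R$.

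First I would treat the $q$-part. Relation \eqref{eq:conditions_gamma1} is a cyclic second-order recurrence whose transfer matrices are exactly the $2\times2$ matrices appearing in Condition \ref{cond:comp1}; since their product equals $\II_2$, the recurrence closes for every initial pair $(q_1,q_2)$, so the space $Q$ of admissible tuples $(q_1,\dots,q_F)$ is exactly $2$-dimensional. I would then consider the linear projection $\pi\colon\cH_0(\gamma)\to Q$ forgetting the $s_i$; it is surjective, so $\dim\cH_0(\gamma)=2+\dim(\text{fibre of }\pi)$ provided the fibre dimension does not depend on the base point, which is checked below.

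Next I would describe the fibre of $\pi$ over a fixed $(q_i)\in Q$: it consists of the tuples $(s_i)$ for which there exist $(r_i)$ satisfying \eqref{eq:conditions_gamma2}. Rewriting that relation as $s_i-\mb_i(0)\,s_{i-1}-\ma'_i(0)q_i-\mb'_i(0)q_{i-1}=2\,\ma_i(0)\,r_i$, one sees that when $\tau_i$ is not a crossing edge ($\ma_i(0)\neq0$) the value $r_i$ is uniquely determined and no constraint is placed on the $s_j$, whereas when $\tau_i$ is a crossing edge ($\ma_i(0)=0$) it becomes one affine equation in $(s_{i-1},s_i)$ whose right-hand side is fixed by $(q_i)$. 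Hence the fibre is the solution set of the $\sum_{\tau\ni\gamma}\cross_\tau(\gamma)$ equations indexed by the crossing edges, and their linear parts, being independent of $(q_i)$, I regard as a subset of the $F$ edges of the $F$-cycle on $\{1,\dots,F\}$ (using that each $\mb_i(0)\neq0$ for a fan). If $\gamma$ is not a crossing vertex this subset is proper, hence a forest, so the equations are linearly independent and always solvable, giving a fibre of dimension $F-\sum_{\tau\ni\gamma}\cross_\tau(\gamma)$. If $\gamma$ is a crossing vertex the subset is the whole cycle: Condition \ref{cond:comp1} then forces $F$ to be even and $\prod_i\mb_i(0)=1$ (from the determinant of the product of the matrices $[\,0,1\,;\,\mb_i(0),0\,]$), which is precisely the single cyclic dependency of the homogeneous system $s_i=\mb_i(0)s_{i-1}$, so its rank is $F-1$; Condition \ref{cond:comp2} is exactly what keeps the inhomogeneous system consistent for every $(q_i)\in Q$, and the fibre has dimension $1=F-\sum_{\tau\ni\gamma}\cross_\tau(\gamma)+\cross_+(\gamma)$. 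In both cases the fibre dimension equals $F-\sum_{\tau\ni\gamma}\cross_\tau(\gamma)+\cross_+(\gamma)$, independent of the base point, so $\dim\cH_0(\gamma)=2+F-\sum_{\tau\ni\gamma}\cross_\tau(\gamma)+\cross_+(\gamma)$, and adding the contribution of $p$ gives the stated formula.

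The hard part will be the crossing-vertex case: one must verify simultaneously that the $s$-system drops exactly one in rank (even valence together with $\prod_i\mb_i(0)=1$) and that it remains consistent for all $(q_i)\in Q$, the latter being the purpose of Condition \ref{cond:comp2}; one must also confirm that the fibre dimension never jumps on a lower-dimensional stratum of $Q$, which holds here because the linear part of the fibre equations is independent of $(q_i)$. For a non-crossing vertex all of this is automatic, the crossing edges spanning a forest.
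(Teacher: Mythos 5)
Your proposal is correct and follows the paper's approach: the paper proves this proposition precisely by observing that the defining relations \eqref{eq:conditions_gamma1}--\eqref{eq:conditions_gamma2} involve only the values and first derivatives of $\ma_i,\mb_i$ at $0$, so the polynomial-case count of \cite[Proposition 5.1]{mourrain_dimension_2016} applies verbatim, which is exactly your opening reduction. The only difference is that you go on to execute the linear algebra the paper delegates to that reference (free $p$, the $2$-dimensional cyclic $q$-recurrence closed by Condition \ref{cond:comp1}, elimination of the $r_i$ on non-crossing edges, and the rank-drop/consistency analysis via Condition \ref{cond:comp2} at a crossing vertex), and that computation is sound.
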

Since the vectors in $\cH(\gamma)$ only depend on the Taylor expansion of $f$ at $\gamma$, and $f$ can be seen as a polynomial spline in a neighborhood of $\gamma$, then the proof of Proposition \ref{prop:dim_H} follows the same argument as the one in \cite{mourrain_dimension_2016}.

\begin{prop}\label{prop:THgamma}
For a topological surface $\cM_\gamma$ as before, if $\ms(\tau_i)$ denotes the separability of the edge $\tau_i$ as in Definition 4, then 
\[T_\gamma\bigl(\cV_k(\gamma)\bigr)=\cH(\gamma),\]
for every $k\geq \max \{\ms(\tau_i)\colon i=1,\dots,F(\gamma)\}$.
\end{prop}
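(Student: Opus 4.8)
The plan is to prove the equality by the two inclusions. The inclusion $T_\gamma(\cV_k(\gamma))\subseteq\cH(\gamma)$ needs no degree hypothesis: for $f=(f_i)\in\cV_k(\gamma)\subseteq\cS_k^{1,r}(\cM_\gamma)$, expanding the gluing relations \eqref{eq:edgecond0}--\eqref{eq:edgecond} across each edge $\tau_i$ in a Taylor series at $\gamma$, as in \eqref{eq:exptaylor}, yields \eqref{eq:conditions_gamma1} from the first-order terms and \eqref{eq:conditions_gamma2} from the bilinear terms, so $T_\gamma(f)\in\cH(\gamma)$.

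For the reverse inclusion, fix $\mathbf{h}=[p,q_1,\dots,q_F,s_1,\dots,s_F]\in\cH(\gamma)$ and a choice of $r_1,\dots,r_F\in\R$ realizing \eqref{eq:conditions_gamma1}--\eqref{eq:conditions_gamma2}. I would construct $f\in\cV_k(\gamma)$ with $T_\gamma(f)=\mathbf{h}$ by running once around the cyclically ordered faces $\sigma_1,\dots,\sigma_F$. The key building block is, for each interior edge $\tau_i$, the two-face submesh $\cM_{\tau_i}=\sigma_{i-1}\cup\sigma_i$: the map $\Theta_{\tau_i}$ of Section \ref{sec:relation_syz} together with the syzygy module $\syz^{r-1,r,r}_{k-1}(a_i,b_i,c_i)$ parametrise the splines with support along $\tau_i$ that are $G^1$ across $\tau_i$, and since $k\ge\ms(\tau_i)$, Proposition \ref{prop:Hgg} provides enough freedom to prescribe the Taylor data of such a spline at $\gamma$ while forcing it to vanish at the other endpoint $\delta_i$ -- hence, support being along $\tau_i$, to have vanishing b-spline coefficients along every edge of $\sigma_{i-1},\sigma_i$ other than $\tau_i$. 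Concretely, I would pick $f_1\in\cR_k^r(\sigma_1)$ with the four corner coefficients at $\gamma$ realizing $[p,q_1,q_2,s_1]$, with vanishing coefficients along the two outer edges of $\sigma_1$, and with a restriction $D_{\tau_2}^{\sigma_1}(f_1)$ freely chosen so as to extend that corner data; inductively, $G^1$-continuity across $\tau_i$ determines $D_{\tau_i}^{\sigma_i}(f_i)$ from $D_{\tau_i}^{\sigma_{i-1}}(f_{i-1})$, and the syzygy freedom lets me complete each $f_i$ so that its corner coefficients realize $[p,q_i,q_{i+1},s_i]$ -- possible since $\mathbf{h}\in\cH(\gamma)$ -- and its coefficients along the outer edges of $\sigma_i$ and at $\delta_i$ vanish.

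If $\gamma$ is a boundary vertex the process ends at $\sigma_F$ with no further condition, since the boundary edges through $\gamma$ are not in the target of $D_\gamma$. If $\gamma$ is interior, the one remaining requirement is that $f_F$ and the already-fixed $f_1$ glue $G^1$ across $\tau_1$: \eqref{eq:edgecond0} copies one b-spline layer of $f_1$ from $f_F$ and the interior part of the other layer is still free, so the only genuine obstruction is that the corner data at $\gamma$ forced on $f_1$ by transporting once around the cycle agree with the corner data assigned to $f_1$ at the start. For the first-order part $[p,q_1,q_2]$ this is exactly Condition \ref{cond:comp1} (the transport of $(q_{i-1},q_i)$ to $(q_i,q_{i+1})$ being multiplication by the $2\times2$ matrix with rows $(0,1)$ and $(\mb_i(0),\ma_i(0))$), and for the mixed second-order part $s_1$ it follows from \eqref{eq:conditions_gamma2} together with Condition \ref{cond:comp2} and the remaining freedom in the $r_i$ -- exactly as in the polynomial case of \cite{mourrain_dimension_2016}. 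This gives $f\in\cV_k(\gamma)$ with $T_\gamma(f)=\mathbf{h}$, proving $\cH(\gamma)\subseteq T_\gamma(\cV_k(\gamma))$ and hence the claimed equality.

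The step I expect to be the main obstacle is this closing-up around an interior vertex: checking that the spline assembled edge by edge around the whole ring actually re-glues across $\tau_1$, and in particular that the transversal second-order freedom $r_i$, which is invisible to $T_\gamma$, can be chosen coherently all the way around. Everything else is bookkeeping on top of the per-edge results already proved, namely Proposition \ref{prop:Hgg} and the explicit syzygy basis of Section \ref{sec:syz_basis}.
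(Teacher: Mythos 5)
Your first inclusion is fine and coincides with the paper's. The reverse inclusion is where your route diverges, and the divergence contains a genuine gap, located exactly at the step you yourself flag: the closing-up across $\tau_1$. Two problems. First, the obstruction there is not just the corner data at $\gamma$: $G^1$-continuity across $\tau_1$ is a functional identity along the whole edge, namely that the pair of strips $\bigl(D_{\tau_1}^{\sigma_F}(f_F),\, D_{\tau_1}^{\sigma_1}(f_1)\bigr)$ arises from an element of $\Syz^{r-1,r,r}_{k-1}(a_1,b_1,c_1)$ via $\Theta_{\tau_1}$, as in \eqref{eq18}--\eqref{eq19}. With $f_1$ fixed at the start and the strip of $f_F$ along $\tau_1$ dictated by your sequential transport, nothing guarantees this identity; and the claim that ``the interior part of the other layer is still free'' is not available once $f_1$ has been fixed (its $\tau_1$-strip also carries the corner coefficient $c_{1,1}$, which is shared with its $\tau_2$-strip). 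Second, invoking Conditions \ref{cond:comp1} and \ref{cond:comp2} at this point conflates the two halves of the argument: those conditions govern the solvability of the cyclic system \eqref{eq:conditions_gamma1}--\eqref{eq:conditions_gamma2} and enter the dimension count of $\cH(\gamma)$ (Proposition \ref{prop:dim_H}); in the surjectivity statement you start from $\bh\in\cH(\gamma)$ together with a choice of $r_1,\dots,r_F$, so the corner data already closes up by hypothesis, and what remains to be proved is its realization by an actual spline --- which the transport argument does not supply.

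The paper's proof avoids the loop altogether, and this is the essential difference. For each interior edge $\tau_i$ independently, Proposition \ref{prop:Hgg} (applicable since $k\ge\ms(\tau_i)$) produces a $G^1$ pair $(f^{\tau_i}_i,f^{\tau_i}_{i-1})\in\cS^{1,r}_k(\cM_{\tau_i})$ with $T_\gamma=[p,q_i,q_{i+1},s_i,p,q_{i-1},q_i,s_{i-1}]$ and vanishing Taylor data at $\delta_i$. Then, on each face $\sigma_i$, because $T^{\sigma_i}_\gamma(f^{\tau_i}_i)=T^{\sigma_i}_\gamma(f^{\tau_{i+1}}_i)$, the two edge strips are merged into a single $g_i\in\cR_k(\sigma_i)$ whose restrictions along $\tau_i$ and $\tau_{i+1}$ are the prescribed ones and whose Taylor data along the outer edges vanish. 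The assembled $\bg=(g_i)$ is automatically $G^1$ across every interior edge (the strips meeting at $\tau_i$ are exactly those of the per-edge pair) and lies in $\ker D_\gamma$, so $T_\gamma(\bg)=\bh$; no cyclic compatibility has to be re-checked because it is already encoded in $\bh\in\cH(\gamma)$. If you wish to keep your sequential scheme you would have to prove the strip-level closing-up (and also justify that each inductive extension preserves the vanishing at $\delta_i$ and along the outer edges while staying in the spline space); switching to the per-edge construction followed by the face-level merge removes the need for both.
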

\begin{proof}
By definition (see \eqref{eq:Vgamma}), the elements of $\cV_k(\gamma)$ satisfy the conditions \eqref{eq:conditions_gamma1} and \eqref{eq:conditions_gamma2} on the Taylor expansion of $f$, then $T_\gamma\bigl(\cV_k(\gamma)\bigr)\subseteq \cH(\gamma)$.

Let us consider a vector $\bh=[p,q_1,\dots,q_{F(\gamma)},s_1,\dots,s_{F(\gamma)}]\in\cH(\gamma)$, we need to prove that this vector is in the image $T_\gamma\bigl(\cV_k(\gamma)\bigr)$.
In fact, by Proposition \ref{prop:Hgg} applied to $\tau_i=[\gamma,\delta_{i}]$,  there exists $(f^{\tau_i}_i,f^{\tau_i}_{i-1})\in \cS_k^{1,r}(\cM_{\tau_i})$ such that $T_{\gamma}(f^{\tau_i}_i,f^{\tau_i}_{i-1})=[p,q_i,q_{i+1},s_i,p,q_{i-1},q_i,s_{i-1}]$ and   $T_{\delta_i}(f^{\tau_i}_i,f^{\tau_i}_{i-1})=0$ for $k\geq\ms(\tau_i)$, for $i=2,\dots, F$. 
Let us notice that in such case, $T_\gamma^{\sigma_i}(f^{\tau_i}_i)=T_\gamma^{\sigma_i}(f^{\tau_{i+1}}_i)$. Thus, it follows that there exists $g_i\in\cR_k(\sigma_i)$ such that $T_{\tau_i}^{\sigma_i}(g_i)=f^{\tau_i}_i$ and $T_{\tau_{i+1}}^{\sigma_i}(g_i)=f^{\tau_{i+1}}_i$. The spline $g_i$ is constructed by taking the coefficients of $f^{\tau_i}_i$ and $f^{\tau_{i+1}}_i$ in $\cR^{\sigma_i}(\tau_i)$ and $\cR^{\sigma_{i}}(\tau_{i+1})$, respectively (see Section \ref{sec:tay}). 
Since $T_{\delta_i}^{\sigma_i}(f^{\tau_i}_i)=T_{\delta_i}^{\sigma_i}(g_i)=0$ and $T_{\delta_{i+1}}^{\sigma_i}(f^{\tau_{i+1}}_i)=T_{\delta_{i+1}}^{\sigma_i}(g_i)=0$ then $T_\tau^{\sigma_i}(g_i)=0$ for every edge $\tau\in\sigma_i$ such that $\gamma\notin\tau$.
Let $\bg=[g_1,g_2,\dots,g_{F(\gamma)}]$ where $g_i\in\cR_k(\sigma_i)$ is as previously constructed. Then $\bg$ and their first derivatives vanish on the edges in $\partial \cM_\gamma$, and $\bg$ satisfies the gluing conditions along all the interior edges $\tau_i$ of $\cM_\gamma$, i.e. $\bg\in\cS_k^1(\cM_\gamma)\cap \ker D_{\gamma}$. Hence $\bg\in\cV_k(\gamma)$, and by construction $T_\gamma(\bg)=\bh$. 
\end{proof}

Given a topological surface $\cM$, let $T$ be the Taylor map at all the vertices of $\cM$, as defined in Section \ref{sec:tay}. We have the following exact sequence 
\begin{equation}\label{eq:T}
0\rightarrow \cK_k(\cM)\rightarrow \cS_k^{1}(\cM)\xrightarrow{ T }\cH_{k}(\cM) \rightarrow 0
\end{equation}
where $\cH_{k}(\cM)=T\bigl(\cS_k^{1}(\cM)\bigr)$ and $\cK_k(\cM)=\ker T \cap \cS_k^{1}(\cM)$.
Let us define $\ms^* =\max \{\ms(\tau)\colon \tau\in \cM_1\}$. From Proposition \ref{prop:Hgg}, we know that $\ms^*\leq 2+ \max \{v_i^\tau\colon \text{ for } i=1,2 \text{ and } \tau\in\cM_1 \} +\min (3,r)$, where $(u_i^\tau, v_i^\tau)$ for $i=1,2$ are the degrees of the generators of $\syz_1$ and $\syz_2$, respectively, with $u_i^\tau\leq v_i^\tau$.

\begin{prop}\label{prop:dim_gamma}
Let $F(\gamma)$ and $\cH(\gamma)$ be as defined above for each vertex $\gamma\in\cM_0$, then for every $k\geq \ms^*$ we have 
$T(\cS_k^{1}(\cM))=\prod_\gamma \cH(\gamma)$
and
\[\dim T (\cS_k^{1}(\cM))=\sum_{\gamma\in \cM_0} (F(\gamma)+3) - \sum_{\gamma\in\cM_0}\sum_{\tau\ni\gamma}\cross_\tau(\gamma)+\sum_{\gamma\in\cM_0} \cross_+(\gamma).\]
\end{prop}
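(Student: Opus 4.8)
The plan is to reduce the global statement to the local analysis already carried out vertex by vertex. The key point is that the Taylor map $T$ at all vertices factors as a product of the vertex Taylor maps $T_\gamma$, and that the local spaces $\cH(\gamma)$ only see the $2$-jet of a spline at $\gamma$ (more precisely, the data $[p, q_1,\dots,q_{F(\gamma)}, s_1,\dots,s_{F(\gamma)}]$). First I would record the obvious inclusion $T(\cS_k^1(\cM)) \subseteq \prod_{\gamma\in\cM_0}\cH(\gamma)$: each component $f_\sigma$ of a spline $f\in\cS_k^1(\cM)$ satisfies the gluing conditions \eqref{eq:conditions_gamma1} and \eqref{eq:conditions_gamma2} at every vertex $\gamma$ of $\sigma$, so $T_\gamma(f)\in\cH(\gamma)$ for each $\gamma$. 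The dimension formula then follows immediately from the equality $T(\cS_k^1(\cM))=\prod_\gamma\cH(\gamma)$ together with Proposition \ref{prop:dim_H}, which gives $\dim\cH(\gamma)=F(\gamma)+3-\sum_{\tau\ni\gamma}\cross_\tau(\gamma)+\cross_+(\gamma)$; summing over $\gamma\in\cM_0$ yields the stated expression. So the whole content is the surjectivity $T(\cS_k^1(\cM))=\prod_\gamma\cH(\gamma)$.

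For surjectivity I would argue one vertex at a time and then superpose. Fix a vertex $\gamma\in\cM_0$ and a prescribed vector $\bh\in\cH(\gamma)$. By Proposition \ref{prop:THgamma} applied to the submesh $\cM_\gamma$, for $k\geq\max\{\ms(\tau_i): \tau_i\ni\gamma\}$ there is a spline $\bg=(g_i)\in\cV_k(\gamma)=\ker D_\gamma\cap\cS_k^1(\cM_\gamma)$ with $T_\gamma(\bg)=\bh$. The crucial feature is that $\bg\in\ker D_\gamma$: its restriction (in the sense of $D^{\sigma_i}_\tau$) along every boundary edge $\tau$ of $\cM_\gamma$ vanishes, and in particular the relevant b-spline coefficients of $g_i$ near the edges $\tau\in\cM_1$ of $\sigma_i$ not passing through $\gamma$ are zero. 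This means $\bg$, viewed as a collection of functions on the faces adjacent to $\gamma$ and extended by zero to all other faces of $\cM$, automatically satisfies the gluing conditions \eqref{eq:edgecond0}–\eqref{eq:edgecond} across every edge of $\cM$: across an edge of $\cM_\gamma$ through $\gamma$ the condition holds because $\bg\in\cS_k^1(\cM_\gamma)$, and across any other edge both sides are zero in the relevant jet. Hence the zero-extension of $\bg$ lies in $\cS_k^1(\cM)$, has $T_\gamma$-image equal to $\bh$, and has $T_{\gamma'}$-image $0$ for every other vertex $\gamma'\neq\gamma$ (again because the restrictions along the boundary edges of $\cM_\gamma$, which contain the vertices $\delta_i\neq\gamma$, vanish). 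Summing these one-vertex solutions over all $\gamma\in\cM_0$ and over a basis of each $\cH(\gamma)$ produces splines in $\cS_k^1(\cM)$ whose $T$-images span $\prod_\gamma\cH(\gamma)$, provided $k\geq\ms^*=\max\{\ms(\tau):\tau\in\cM_1\}$, which holds by hypothesis.

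The main obstacle — and the place where care is needed — is verifying that the zero-extension of a vertex spline $\bg\in\cV_k(\gamma)$ really is a legitimate element of $\cS_k^1(\cM)$ and that $T_{\gamma'}(\bg)=0$ for $\gamma'\neq\gamma$. This hinges on the precise meaning of ``support along an edge'' and on the fact that $\ker D_\gamma$ forces vanishing of enough b-spline coefficients near the boundary edges of $\cM_\gamma$, including the two b-spline layers adjacent to each such edge and to the vertices $\delta_i$ on them; once this is granted, both the $C^1$-gluing across foreign edges and the vanishing of the foreign vertex jets are immediate. I would spell this out using the explicit description of $D^\sigma$ and $D^\sigma_\tau$ from Section \ref{sec:tay} and the remark after Definition \ref{def:edgespline}, but there is no essential new difficulty beyond bookkeeping. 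The remaining step, assembling the dimension count from Proposition \ref{prop:dim_H}, is then purely formal.
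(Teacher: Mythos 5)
Your proposal is correct and follows essentially the same route as the paper: the paper's proof simply invokes Propositions \ref{prop:THgamma} and \ref{prop:dim_H} vertex by vertex, and what you add is the (implicit) bookkeeping that elements of $\cV_k(\gamma)=\ker D_\gamma\cap\cS_k^1(\cM_\gamma)$ extend by zero to splines in $\cS_k^1(\cM)$ with vanishing jets at all other vertices, which is exactly the mechanism the paper relies on. No discrepancy in substance.
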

\begin{proof}
The statement follows directly applying Propositions \ref{prop:THgamma} and \ref{prop:dim_H} to each vertex $\gamma\in\cM_0$, with $M_\gamma$ the sub-mesh of $\cM$ which consists of the quadrangles in $\cM$ containing the vertex $\gamma$.
\end{proof}
\subsection{Basis functions associated to a vertex}
Given a topological surface $\cM$, for each vertex $\gamma\in\cM_0$, let us consider the sub-mesh $\cM_\gamma$ consisting of all the faces $\sigma\in\cM$ such that $\gamma\in\sigma$, as before, we denote this number of such faces by $F(\gamma)$.
From Proposition \ref{prop:dim_gamma} we know the dimension of $T(\cS_k^{1}(\cM))$ for $k\geq \ms^*$. In the following, we construct a set of linearly independent splines $\cB_0\subseteq\cS_k^1(\cM)$ such that $\spanset\{T(f)\colon f\in \cB_0\}=T(\cS_k^1(\cM))$.   

Let us take a vertex $\gamma\in\cM_0$ and consider the b-spline representation of the elements $f_\sigma\in \cR_k(\sigma)$ for $\sigma\in\cM_\gamma$. We construct a set $\cB_{0}(\gamma)\subset \cS_k^1(\cM_\gamma)$  of linearly independent spline function as follows:
\begin{itemize}
\item First we add one basis function $f$ attached to the value at 
  $\gamma$, such that $T_\gamma^\sigma(f_\sigma)(\gamma)=1$ for every $\sigma\in\cM_\gamma$. Let us notice that if we define $g_\sigma=\sum_{0\leq i,j\leq 1} N_i(u_\sigma)N_j(v_\sigma)$ for every $\sigma\in\cM_\gamma$, and $g$ on $\cM_\gamma$ such that $g|_\sigma=g_\sigma$, then $g(\gamma)=1$. We lift $g$ to a spline $f$ on $\cM_\gamma$ such that $f$ is in the image of the map $\Theta_\tau$ defined in \eqref{eq:deftheta}, for every $\tau\in\cM_1$ attached to $\gamma$.
%
\item We add two basis functions $g,h$ supported on $\cM_\gamma$ and attached to the first derivatives at $\gamma$. Namely, let us consider  
$ g_{\sigma_1} = (1/2k)\bigl(N_0(u_{\sigma_1}) + N_1(u_{\sigma_1})\bigr) N_1(v_{\sigma_1}) $, 
and 
$ h_{\sigma_1} = (1/2k) N_1(u_{\sigma_1})\bigl(N_0(v_{\sigma_1}) + N_1(v_{\sigma_1})\bigr) $. 
The conditions \eqref{eq:conditions_gamma1} and \eqref{eq:conditions_gamma2} allow us to find~$g_{\sigma_i}$ and $h_{\sigma_i}$, for $i=2,\dots, F(\gamma)$ from $g_{\sigma_1}$ and $h_{\sigma_1}$, respectively. Thus, we define $g$ and $h$ on $\cM_\gamma$ by taking $g|_\sigma=g_\sigma$  and $h|_\sigma=h_\sigma$. Since $g$ and $h$ by construction satisfy the gluing conditions \eqref{eq:edgecond0} and \eqref{eq:edgecond} along the edges, then they are splines in the image $\cS_k^1(\cM_\gamma)$ of $\Theta_{\tau}$ for every interior edge $\tau\in\cM_\gamma$.  
%
\item For each edge $\tau_i$ for $i=1,\dots,F(\gamma)$, let us define the function $g_{\sigma_i} = c_{1,1}^{\sigma_i}(g_{\sigma_i}) N_{1}(u_{\sigma_i})N_{1}(v_{\sigma_i})$, where 
$c_{1,1}^{\sigma_i}(g_{\sigma_i})=1/4k^2$ if $\tau_i$ is not a crossing edge, and equal to zero otherwise.
Then, for every fix edge $\tau_i\in\cM_\gamma$ attached to $\gamma$ we construct a spline
$g$ on $\cM_\gamma$ such that $g|_{\sigma_i}=g_{\sigma_i}$, and $g|_{\sigma_j}$ for $j\neq i$ are determined by $g_{\sigma_i}$ and the gluing data at $\gamma$, according to \eqref{eq:conditions_gamma1} and \eqref{eq:conditions_gamma2}. 
The previous construction produces $F(\gamma)-\sum_{\tau\ni\gamma}\cross_\tau(\gamma)$ (non-zero) spline functions. These splines,  by construction, are in the image of $\Theta_\tau$ \eqref{eq:deftheta} along all the edges $\tau\in\cM_1$ attached to $\gamma$.
\item If $\gamma$ is a crossing vertex, by definition all the edges attached to $\gamma$ are crossing edges. In this case, we define $g_{\sigma_1} = (1/4k^2)N_{1}(u_{\sigma_1})N_{1}(v_{\sigma_1})$, and determine $g_{\sigma_i}$ for $i=2,\dots, F(\gamma)$ using the gluing data at $\gamma$ and conditions \eqref{eq:conditions_gamma1} and \eqref{eq:conditions_gamma2}. Defining $g$ on $\cM_\gamma$ by $g|_{\sigma_i}=g_{\sigma_i}$ we obtain a spline in $\cS_k^1(\cM_\gamma)$.  
\end{itemize}
Let us notice that if $\tau_i$ is a crossing edge then, following the notation in the Taylor expansion of $g_i(u_i,v_i)$ in \eqref{eq:Texp}, the coefficient $s_i=\partial_{u_{\sigma_i}}\partial_{v_{\sigma_i}} g_i(u_i,v_i)|_{\gamma}$ becomes dependent on $s_{i-1}, q_i$ and $q_{i-1}$ and therefore there is no additional basis function associated to the edge $\tau_i$. 

Applying the previous construction to every $\gamma\in\cM_0$, we obtain a collection of splines $\cB_0(\gamma)\subseteq\cS_k^1(\cM_\gamma)$ for each $\gamma\in\cM_0$. We lift the splines $f\in\cS_k^1(\cM_\gamma)$ to functions on $\cM$ by defining $f_\sigma=0$ for every $\sigma\notin\cM_\gamma$. To simplify the exposition, we abuse the notation, and will also call $f$ the lifted spline on $\cM$, and $\cB_0(\gamma)$ the collection of those splines. 

\begin{df}\label{def:B0}
For a topological surface $\cM$, let $\cB_0 \subseteq S_k^1(\cM)$ be the set of linearly independent functions defined by
\begin{equation}\label{eq:B0}
  \cB_0=\bigcup_{\gamma\in\cM_0} \cB_{0}(\gamma),
\end{equation}
where $\cB_0(\gamma)\subseteq \cS_k^1(\cM_\gamma)$, for each vertex $\gamma\in\cM$. 
\end{df}

By construction, the collection of splines in $\cB_0(\gamma)$, for each vertex $\gamma\in\cM_0$, and $\cB_0$, are linearly independent. Moreover, the number of elements in $\cB_0$ coincides with the dimension of $\cH_k(\cM)$ and hence they constitute a basis for the spline space $\cS_k^1(\cM)$ whose Taylor map $T$ \eqref{eq:T} is not zero.\section{Splines on a face}

Let $\cF_{k}(\cM)$ be the spline functions in $\cS_{k}^{r}(\cM)$ with vanishing Taylor expansion along all the edges of $\cM$, that is, $\cF_{k}(\cM)= \cS_{k}^{r}(\cM) \cap \ker D$. 

An element $f$ is in  $\cF_{k}(\cM)$ if and only if  $c_{i,j}^{\sigma}(f)=0$ for $i  \le 1$ or $i \ge m-1$,
$ j \le 1$ or $ j \ge  m-1$ for all $\sigma\in \cM_{2}$.

Let $\cF_{k}(\sigma)$ be the elements in  $\cF_{k}(\cM)$ with $c_{i,j}^{\sigma'}(f)=0$ for $0\leq i,j \leq m$ and $\sigma'\neq \sigma$.
\begin{itemize}
 \item The dimension of $\cF_{k}(\sigma)$ is $(2\, k-r-3)_{+}^{2}$.
 \item A basis of $\cF_{k}(\sigma)$ is $N_{i}(u_{\sigma})N_{j}(v_{\sigma})$ for $1< i, j < m-1$.
\end{itemize}

We easily check that $\cF_{k}(\cM)= \oplus_{\sigma} \cF_{k}(\sigma)$, which implies the following result:
\begin{lm} \label{lm:dim:F}
The dimension of $\cF_{k}(\cM)$ is $(2 k-r-3)_{+}^{2} F_{2}$, where $F_{2}$ is the number of (quadrangular) faces of $\cM$.
\end{lm}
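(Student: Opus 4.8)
The plan is to reduce the computation to a face-by-face count through the direct sum decomposition $\cF_k(\cM)=\bigoplus_{\sigma\in\cM_2}\cF_k(\sigma)$ stated just above, and then to read off the dimension of each summand from its explicit basis.

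First I would justify the decomposition. Given $f=(f_\sigma)_\sigma\in\cF_k(\cM)$ and a face $\sigma_0\in\cM_2$, let $f^{\sigma_0}$ be the tuple equal to $f_{\sigma_0}$ on $\sigma_0$ and to $0$ on every other face; I claim $f^{\sigma_0}\in\cF_k(\sigma_0)$. The only point that needs checking is that $f^{\sigma_0}$ is still a differentiable spline on $\cM$, i.e. that the matching conditions \eqref{eq:edgecond0}--\eqref{eq:edgecond} hold across every edge $\tau$. For an edge not bounding $\sigma_0$ both sides vanish identically; for an edge $\tau$ of $\sigma_0$, shared with a face $\sigma_1$, the right-hand sides vanish because the $\sigma_1$-component of $f^{\sigma_0}$ is $0$, while the left-hand sides --- the restriction of $f_{\sigma_0}$ to $\tau$ and its transversal first derivative --- are linear combinations of the b-spline coefficients $c^{\sigma_0}_{i,j}(f)$ having $i$ or $j$ in $\{0,1\}$, by the values $N_0(0)=1$, $N_i(0)=0$ for $i\geq1$, $N_0'(0)=-2k$, $N_1'(0)=2k$, $N_i'(0)=0$ for $i\geq2$, and the symmetric relations at the endpoint $1$. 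All these coefficients vanish since $f\in\cF_k(\cM)$, so the matching conditions hold trivially and $f^{\sigma_0}\in\cS_k^{r}(\cM)$; it lies in $\ker D$ and is supported on $\sigma_0$, hence in $\cF_k(\sigma_0)$. Then $f=\sum_\sigma f^\sigma$, and the sum is direct because reading off the coefficients on a fixed face kills all summands but one. This gives $\cF_k(\cM)=\bigoplus_{\sigma}\cF_k(\sigma)$.

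Next I would count dimensions. From the stated basis $\{N_i(u_\sigma)N_j(v_\sigma):1<i,j<m-1\}$ of $\cF_k(\sigma)$, with $m=2k-r$, the admissible index pairs are those with $i,j\in\{2,\dots,m-2\}$, which gives $(m-3)_+$ choices for each index; hence $\dim\cF_k(\sigma)=(m-3)_+^2=(2k-r-3)_+^2$, the same value for every face. Summing over the $F_2$ faces of $\cM$ then yields
\[
\dim\cF_k(\cM)=\sum_{\sigma\in\cM_2}\dim\cF_k(\sigma)=(2k-r-3)_+^2\,F_2,
\]
which is the assertion.

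The main (and essentially only) obstacle is the verification in the second paragraph that the face-supported pieces $f^{\sigma_0}$ remain $G^1$ across the edges of $\sigma_0$. This is routine once one observes that the order-$1$ matching data along an edge depends only on the two outermost layers of b-spline coefficients adjacent to that edge --- precisely the layers annihilated by the defining condition of $\cF_k(\cM)$ --- so cutting the spline apart face by face costs nothing.
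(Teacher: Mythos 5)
Your proof is correct and follows the same route as the paper: the face-by-face direct sum decomposition $\cF_k(\cM)=\oplus_\sigma\cF_k(\sigma)$ followed by counting the tensor-product basis $N_i(u_\sigma)N_j(v_\sigma)$, $1<i,j<m-1$, on each face. The only difference is that you spell out the verification that a face-supported piece still satisfies the $G^1$ matching conditions, which the paper dismisses with ``we easily check''; your justification of that step is sound.
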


\paragraph{Basis functions associated to a face.}
The set $\cF_{k}(\cM)$ of basis functions associated to faces is obtained by taking the union of the bases of $\cF_{k}(\sigma)$ for all faces $\sigma \in \cM_2$, that is,
\begin{equation}\label{eq:B2} 
\cB_{2} := \{ N_{i}(u_{\sigma}) N_{j}(v_{\sigma}), 1< i, j < m-1, \sigma\in \cM_{2} \}.
\end{equation}
 \section{Dimension and basis of Splines on $\cM$}\label{sec:dim}
We have now all the ingredients to determine the dimension of $\cS_{k}^{1,r}(\cM)$ and a basis.

\begin{theorem}\label{thm:dim}
Let $\ms^{*} = \max \{\ms(\tau)\mid \tau\in\cM_{1}\}$. Then, for $k\ge \ms^{*},$ 
\[ 
\begin{array}{rclll}
	\dim \cS_{k}^{1} (\cM) & = & (2k-r-3)^{2} F_{2}  + \sum_{\tau\in \cM_{1}} \tilde{d}_{\tau}(k,r)  + 4 F_{2} -9 F_{1} + 3 F_{0} + F_{+} 
\end{array}
\]
where
\begin{itemize}
   \item $\tilde{d}_{\tau}(k)$ is the dimension of the syzygies of the gluing data along $\tau$ in degree $\le k$,
   \item $F_2$ is the number of rectangular faces, 
  \item $F_1$ is the number of edges,    
  \item $F_0$ (resp. $F_{+}$) is the number of (resp. crossing) vertices,
\end{itemize}
\end{theorem}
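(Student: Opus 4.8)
The plan is to assemble the dimension formula from the three exact sequences and direct-sum decompositions prepared in the previous sections, so that for $k\ge\ms^{*}$ the space $\cS_{k}^{1,r}(\cM)$ breaks into a vertex part, an edge part and a face part whose dimensions are already known, and then to perform the bookkeeping. Concretely, the starting point is the exact sequence \eqref{eq:T}, which gives
\[
	\dim\cS_{k}^{1}(\cM)=\dim\cK_{k}(\cM)+\dim\cH_{k}(\cM),
\]
with $\cH_{k}(\cM)=T(\cS_{k}^{1}(\cM))$ and $\cK_{k}(\cM)=\ker T\cap\cS_{k}^{1}(\cM)$. The term $\dim\cH_{k}(\cM)$ is supplied directly by Proposition \ref{prop:dim_gamma}, namely $\sum_{\gamma}(F(\gamma)+3)-\sum_{\gamma}\sum_{\tau\ni\gamma}\cross_{\tau}(\gamma)+\sum_{\gamma}\cross_{+}(\gamma)$, valid precisely for $k\ge\ms^{*}$. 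So the work is to compute $\dim\cK_{k}(\cM)$.

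For $\cK_{k}(\cM)$ I would use the global edge–restriction map $D$. First one checks that $D$ is a projector on $\cS_{k}^{1}(\cM)$: the gluing relations \eqref{eq:edgecond0}–\eqref{eq:edgecond} across an edge only involve the b-spline coefficients $c_{i,0}^{\sigma}$, $c_{i,1}^{\sigma}$ of the two adjacent faces, all of which $D$ keeps, hence $D(\cS_{k}^{1}(\cM))\subseteq\cS_{k}^{1}(\cM)$; and since $Tf$ reads only the four corner coefficients of each face, which $D$ also keeps, $D(\cK_{k}(\cM))\subseteq\cK_{k}(\cM)$. This gives $\cK_{k}(\cM)=(\ker D\cap\cK_{k}(\cM))\oplus D(\cK_{k}(\cM))$. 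An element of $\ker D\cap\cK_{k}(\cM)$ has all coefficients with an index $\le1$ or $\ge m-1$ equal to zero, hence lies in $\cF_{k}(\cM)$, and conversely; so this summand is $\cF_{k}(\cM)$, of dimension $(2k-r-3)_{+}^{2}F_{2}$ by Lemma \ref{lm:dim:F}. For the other summand, take $g\in D(\cK_{k}(\cM))$: it has edge support and, because $Tg=0$, vanishing corner coefficients on every face, so on each face the non-zero coefficients of $g|_{\sigma}$ sit in the four pairwise disjoint "edge bands" with the corners removed; thus $g|_{\sigma}$ is the sum of its band components, and for each edge $\tau$ the corresponding pair $g_{\tau}$ (extended by zero) satisfies the gluing condition across $\tau$, has support along $\tau$ and vanishing Taylor data at the two endpoints of $\tau$, i.e. $g_{\tau}\in\cE_{k}(\tau)$. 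Conversely every element of $\cE_{k}(\tau)$ extended by zero lies in $D(\cK_{k}(\cM))$, and the sum over $\tau$ is direct because the band index sets are disjoint; hence $D(\cK_{k}(\cM))=\bigoplus_{\tau\in\cM_{1}}\cE_{k}(\tau)$ and $\dim\cK_{k}(\cM)=(2k-r-3)_{+}^{2}F_{2}+\sum_{\tau}\dim\cE_{k}(\tau)$.

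Finally I would substitute $\dim\cE_{k}(\tau)=\tilde d_{\tau}(k,r)-9+\cross_{\tau}(\gamma)+\cross_{\tau}(\gamma')$ for interior edges (Lemma \ref{lm:dim:E}, applicable since $k\ge\ms^{*}\ge\ms(\tau)$) and the matching expression for boundary edges (Remark \ref{rem:boundary:edge}, which fits the same pattern with $\cross_{\tau}\equiv0$), add the three contributions, and use the two combinatorial identities
\[
	\sum_{\tau\in\cM_{1}}\bigl(\cross_{\tau}(\gamma)+\cross_{\tau}(\gamma')\bigr)=\sum_{\gamma\in\cM_{0}}\sum_{\tau\ni\gamma}\cross_{\tau}(\gamma),\qquad\sum_{\gamma\in\cM_{0}}F(\gamma)=4F_{2},
\]
the latter since each quadrangular face contributes four vertices. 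The $\cross_{\tau}$ sums cancel against the one in $\dim\cH_{k}(\cM)$, $\sum_{\gamma}(F(\gamma)+3)=4F_{2}+3F_{0}$ and $\sum_{\gamma}\cross_{+}(\gamma)=F_{+}$, leaving exactly $(2k-r-3)^{2}F_{2}+\sum_{\tau\in\cM_{1}}\tilde d_{\tau}(k,r)+4F_{2}-9F_{1}+3F_{0}+F_{+}$ once $k$ is large enough that $(2k-r-3)_{+}=2k-r-3$. The only genuinely delicate step is the middle one — that $D$ is a projector on the global spline space and that the band components of a spline with vanishing vertex Taylor data are themselves admissible edge splines so that $\cK_{k}(\cM)=\cF_{k}(\cM)\oplus\bigoplus_{\tau}\cE_{k}(\tau)$; the hard analysis ($\tilde d_{\tau}$ via syzygies, and $\cH(\gamma)$) is already behind us, and what remains beyond this decomposition is pure arithmetic.
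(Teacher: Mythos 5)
Your proposal is correct and follows essentially the same route as the paper: the exact sequence for the vertex Taylor map $T$, the splitting of its kernel by the edge-restriction projector $D$ into the face part $\cF_{k}(\cM)$ and the edge part $\bigoplus_{\tau}\cE_{k}(\tau)$ (the paper's sequence \eqref{eq:Dexact}), and then Lemma \ref{lm:dim:E}, Proposition \ref{prop:dim_gamma} and Lemma \ref{lm:dim:F} together with the combinatorial bookkeeping. You merely make explicit what the paper leaves implicit (that $D$ preserves the gluing and kernel conditions, the band decomposition over edges, and the final arithmetic including the boundary-edge convention for $\tilde{d}_{\tau}$).
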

\begin{proof}
By construction,  $\cK_{k}(\cM)=\cS_{k}^{1,r}(\cM) \cap \ker T$ is the set of splines in $\cS_{k}^{1,r}(\cM)$, which Taylor expansion at all the vertices vanish and $\cH_{k}(\cM)$ is the image of $\cS_{k}^{1,r}(\cM)$ by the Taylor map $T$. Thus we have the following exact sequence:
\begin{equation}\label{eq:Texact}
  0\rightarrow \cK_{k}(\cM) \rightarrow \cS_{k}^{1,r}(\cM)\stackrel{T}{\longrightarrow} \cH_{k}(\cM) \rightarrow 0.
 \end{equation}
 
By construction, $\cE_{k}(\cM)$ is the set of splines in $\cK_{k}(\cM)$ with a support along the edges of $\cM$, so that $D(\cK_{k}(\cM)) = \cE_{k}(\cM)$.
The kernel of $D: \oplus_{\sigma} \cR_{k}(\cM) \rightarrow  \oplus_{\sigma} \cR_{k}(\cM) $ is $\cF_{k}(\cM)$. As $\cF_{k}(\cM)\subset \cK_{k}(\cM)$, we have the exact sequence
\begin{equation}\label{eq:Dexact}
0\rightarrow \cF_{k}(\cM) \rightarrow \cK_{k}(\cM) \stackrel{D}{\longrightarrow} \cE_{k}(\cM) \rightarrow 0. 
\end{equation}
 From the exact sequences \eqref{eq:Texact} and \eqref{eq:Dexact}, we have
\begin{eqnarray*}
  \dim  \cS^{1,r}_{k}(\cM)&= & \dim \cH_{k}(\cM)+ \dim \cK_{k}(\cM) \\
   & =& \dim \cH_{k}(\cM)+ \dim \cE_{k}(\cM) + \dim \cF_{k}(\cM) 
\end{eqnarray*}
We deduce the dimension formula using Lemma \ref{lm:dim:E}, Proposition \ref{prop:dim_H} and Lemma \ref{lm:dim:F}, as in \cite[proof of Theorem 6.3]{mourrain_dimension_2016}.
 \end{proof}

 \paragraph{Basis of $\cS_{k}^{1,r}(\cM)$.} A basis of $\cS_{k}^{1,r}(\cM)$ is obtained by taking
 \begin{itemize}
  \item the basis $\cB_{0}$ of $\cV_{k}(\cM)$ attached to the vertices of $\cM$ and defined in \eqref{eq:B0},
  \item the basis $\cB_{1}$ of $\cE_{k}(\cM)$ attached to the edges of $\cM$ and defined in \eqref{eq:B1},
 \item the basis $\cB_{2}$ of $\cF_{k}(\cM)$ attached to the faces of $\cM$ and defined in \eqref{eq:B2}.
 \end{itemize}
 
\section{Examples}
To illustrate the construction, we detail an example of a simple mesh, where a point of valence $3$ is connected to a crossing point. The construction can be extended to points of arbitrary valencies, in a more complex mesh.

We consider the mesh $\cM$ composed of $3$ rectangles $\sigma_{1},\sigma_{2},\sigma_{3}$ glued
around an interior vertex $\gamma$, along the $3$ interior edges $\tau_{1},
\tau_{2}, \tau_{3}$. There are $6$ boundary edges and $6$ boundary vertices
$\delta_{1},\delta_{2},\delta_{3}$, $\epsilon_{1},\epsilon_{2},\epsilon_{3}$.
\begin{figure}[ht]
	\begin{center}
          \includegraphics[height=4cm]{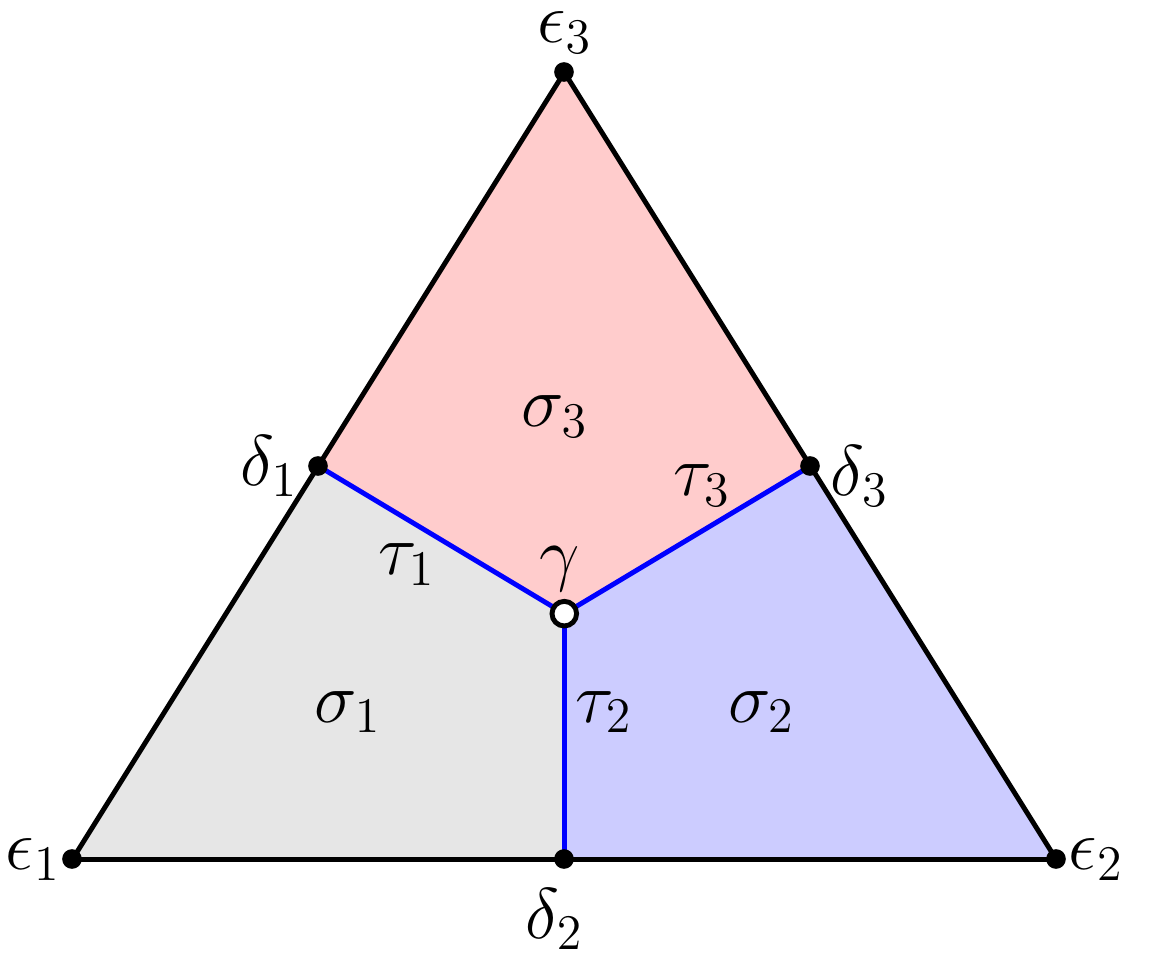}
	\end{center} 
	\vspace{-0.5cm}
	\caption{Smooth corner.}\label{fig:roundcorner}
      \end{figure}
We use the symmetric glueing corresponding to the angle $\frac{2\pi}{3}$   at $\gamma$  and 
$\frac{\pi}{2}$ at $\delta_{1},\delta_{2},\delta_{3}$. 

We choose the gluing data $[a,b,c]$ along an edge $\tau_{i}$ given by Formula \eqref{eq:construct2}: 
$$ 
\begin{array}{rcl}
a(u) &=& \md_{0}(u) \\ 
b(u) &=& - \md_{0}(u) - \md_{1}(u)\\ 
c(u) &=& \md_{0}(u) + \md_{1}(u)	
\end{array}
$$
where $\md_{0}=\tilde{N}_{0}(u)+\tilde{N}_{1}(u)$, $\md_{1}= \tilde{N}_{2}(u)+\tilde{N}_{3}(u)+\tilde{N}_{4}(u)$ for the b-spline basis $\tilde{N}_{0},\ldots,\tilde{N}_{5}$ of $\cU^{0}_{2}$ and where $u=0$ corresponds to $\gamma$.
This gives
$$ 
a(u)= \left\{
\begin{array}{ll}
  -1+4\,{u}^{2} & 0\leq u\leq \frac{1}{2}\\
  \noalign{\medskip}0 & \frac{1}{2}\leq u \leq 1\\ 
\end{array}
\right.,
b(u)= -1,\quad
c(u)=1;
$$
The degrees of the $\mu$-bases of the different components are respectively $\mu_{1}=0,\nu_{1}=2$,
$\mu_{2}=0$, $\nu_{2}=0$. Thus the separability  is reached from the degree $k\geq 4$.

We are going to analyze the spline space $\cS^{1,1}_{4}(\cM)$ for specific gluing data.
An element $f\in \cS^{1,1}_{4}(\cM)$ is represented on each cell $\sigma_{i}$ ($i=1,2,3$) by a tensor product b-spline of class $C^{1}$ with $8\times 8$ b-spline coefficients:
$$ 
f_{k} := \sum_{0\le i,j\le 7} c_{i,j}^{k}(f) N_{i,j}(u_{k},v_{k}),
$$
where $N_{i,j}(u,v)=N_{i}(u) N_{j}(v)$ and $\{N_{0}(u),\ldots, N_{7}(u)\}$ is the basis of $\cU^{1}_{k}$.
We describe an element $f\in \cS^{1,1}_{4}(\cM)$ as a triple of b-spline functions
\[\biggl[\, \sum_{0\le i,j\le 7} c_{i,j}^{1} N_{i,j}\,,\,  \sum_{0\le i,j\le 7} c_{i,j}^{2} N_{i,j}\,,\,  \sum_{0\le i,j\le 7} c_{i,j}^{3}  N_{i,j}\biggr].\]
The separability is reached at degree $4$ and we have the following basis elements, described by a triple of functions which are decomposed in the b-spline bases of each face: 

\noindent $\bullet$ The number of basis functions attached to $\gamma$ is $6= 1+2+3$.

\noindent \quad -- The basis function associated to the value at $\gamma$ is
{ \small \begin{align*}
\qquad \biggl[&N_{{0,0}}+\frac{1}{3}\,N_{{0,2}}+N_{{0,3}}+N_{{0,4}}+2\,N_{{1,3}}+2\,N_{{1,4}
  }+\frac{1}{3}\,N_{{2,0}}+N_{{3,0}}+N_{{4,0}},\\
&\; N_{{0,0}}+\frac{1}{3}\,N_{{2,0}}+N_{{3,0}
}+N_{{4,0}}+3\,N_{{0,1}}+{\frac {31}{3}}\,N_{{0,2}}+17\,N_{{0,3}}+17\,
N_{{0,4}}\\
& \hspace{6.3cm}+14\,N_{{1,2}}+34\,N_{{1,3}}+34\,N_{{1,4}},\\
&\; N_{{0,0}}+3\,N_{{1
,0}}+{\frac {31}{3}}\,N_{{2,0}}+17\,N_{{3,0}}+17\,N_{{4,0}} +\frac{1}{3}\,N_{{0
  ,2}}+N_{{0,3}}+N_{{0,4}}\\
& \hspace{8.5cm}+2\,N_{{1,3}}+2\,N_{{1,4}}\biggr].
\end{align*}}
\noindent{}\quad -- The two basis functions associated to the derivatives at $\gamma$ are
{\small\begin{align*}
\qquad \biggl[&N_{{0,1}}+\frac{10}{3}\,N_{{0,2}}+\frac{16}{3}\,N_{{0,3}}+\frac{16}{3}\,N_{{0,4}}
+\frac{14}{3}\,N_{{1,2}}+{\frac {32}{3}}\,N_{{1,3}}+{\frac {32}{3}}\,N_{{1,4}}\,,\\
&\; N_{{1,0}}+\frac{10}{3}\,N_{{2,0}}+\frac{16}{3}\,N_{{3,0}}+\frac{16}{3}\,N_{{4,0}}\,,\\
&-N_{{0,1}}-\frac{10}{3}\,N_{{0,
2}}-\frac{16}{3}\,N_{{0,3}}-\frac{16}{3}\,N_{{0,4}}-\frac{16}{3}\,N_{{1,2}}-{\frac {32}{3}}\,N
_{{1,3}}-{\frac {32}{3}}\,N_{{1,4}}\\
&\hspace{5.2cm}-N_{{1,0}}-\frac{10}{3}\,N_{{2,0}}-\frac{16}{3}\,N
_{{3,0}}-\frac{16}{3}\,N_{{4,0}}\biggr]\,,\\
& \biggl [N_{{1,0}}+\frac{10}{3}\,N_{{2,0}}+\frac{16}{3}\,N_{{3,0}}+\frac{16}{3}\,N_{{4,0}}\,,\\
&-N_{{0,1}}-
\frac{10}{3}\,N_{{0,2}}-\frac{16}{3}\,N_{{0,3}}-\frac{16}{3}\,N_{{0,4}}-\frac{14}{3}\,N_{{1,2}}-{
                  \frac {32}{3}}\,N_{{1,3}}-{\frac {32}{3}}\,N_{{1,4}}\,,\\
&              
-N_{{1,0}}-\frac{10}{3}\,N_{{2,0}}-\frac{16}{3}\,N_{{3,0}}-\frac{16}{3}\,N_{{4,0}}+N_{{0,1}}+\frac{10}{3}\,N_{{0,2}}+\frac{16}{3}\,N_{{0,3}}\\
&\hspace{4.6cm}+\frac{16}{3}\,N_{{0,4}}+\frac{14}{3}\,N_{{1,2}}+{\frac {32}{3}}\,N_{{1,3 
}}+{\frac {32}{3}}\,N_{{1,4}}\biggr].
\end{align*}}

\noindent{}\quad -- The three basis functions associated to the cross derivatives at $\gamma$ are 
{\small
\begin{align*}         
\qquad \biggl[ & -\frac{4}{3}\,N_{{0,2}}-\frac{8}{3}\,N_{{0,3}}-\frac{8}{3}\,N_{{0,4}}+N_{{1,1}}-\frac{4}{3}\,N_{{1,2}
  }-\frac{16}{3}\,N_{{1,3}}-\frac{16}{3}\,N_{{1,4}}\,,\\
 & \hspace{5.5cm}\; -\frac{4}{3}\,N_{{2,0}}-\frac{8}{3}\,N_{{3,0}}-\frac{8}{3}\,
N_{{4,0}}\,,\; 0 \, \biggr]\,,\\
\biggl[ & -\frac{4}{3}\,N_{{2,0}}-\frac{8}{3}\,N_{{3,0}}-\frac{8}{3}\,N_{{4,0}}\,,\\
&-\frac{4}{3}\,N_{{0,2}}-\frac{8}{3}\,N_
  {{0,3}}-\frac{8}{3}\,N_{{0,4}}-\frac{4}{3}\,N_{{1,2}}-\frac{16}{3}\,N_{{1,3}}-\frac{16}{3}\,N_{{1,4}}\,,\\
&
-\frac{4}{3}\,N_{{2,0}}-\frac{8}{3}\,N_{{3,0}}-\frac{8}{3}\,N_{{4,0}}+N_{{1,1}}-\frac{4}{3}\,N_{{0,2}}
-\frac{8}{3}\,N_{{0,3}}-\frac{8}{3}\,N_{{0,4}}\\
&\hspace{6cm}-\frac{4}{3}\,N_{{1,2}}-\frac{16}{3}\,N_{{1,3}}-\frac{16}{3}\,N_
  {{1,4}}\biggr],\\
&\biggl[-\frac{4}{3}\,N_{{2,0}}-\frac{8}{3}\,N_{{3,0}}-\frac{8}{3}\,N_{{4,0}}\,,\; 0\,,\\
&\hspace{1.5cm}-\frac{4}{3}\,N_{{0,2}}-\frac{8}{3}\,
N_{{0,3}}-\frac{8}{3}\,N_{{0,4}}-\frac{4}{3}\,N_{{1,2}}-\frac{16}{3}\,N_{{1,3}}-\frac{16}{3}\,N_{{1,4}
  }\biggr].
\end{align*}}
\noindent $\bullet$ There are $4= 1+2+2-1$ basis functions attached to $\delta_{i}$:
{\small \begin{align*}
\scalebox{1.3}{[}N_{{0,7}}\,, \; N_{{7,0}}+2\,N_{{7,1}}\,,\; 0\scalebox{1.3}{]},
\scalebox{1.3}{[}N_{{0,6}}\,,\; N_{{6,0}}+2\,N_{{6,1}}\,,\; 0\, \scalebox{1.3}{]},
\scalebox{1.3}{[}N_{{1,7}}\,,\; -N_{{7,1}}\,,0\,\scalebox{1.3}{]},
\scalebox{1.3}{[}N_{{1,6}}\,,-N_{{6,1}}\,,\; 0\,\scalebox{1.3}{]}.
\end{align*}}
The basis functions associated to the other boundary points $\delta_{2},\delta_{3}$ are obtained by cyclic permutation.

$\bullet$ There are $5=14-5-4$ basis functions attached to edge $\tau_{1}$:
{\small \begin{align*} 
&\scalebox{1.3}{[}-N_{{1,2}}\,,\; N_{{2,1}}\,,\;0\, \scalebox{1.3}{]},
  \scalebox{1.3}{[}-N_{{1,3}}\,,\;N_{{3,1}}\,,\;0\, \scalebox{1.3}{]},
  \scalebox{1.3}{[}-N_{{1,4}}\,,\;N_{{4,1}}\,,\;0\, \scalebox{1.3}{]},\\
&\scalebox{1.3}{[}-N_{{1,5}}\,,\;N_{{5,1}}\,,\;0\, \scalebox{1.3}{]},\scalebox{1.3}{[}N_{{0,5}}+2\,N_{{1,5}}\,,\;N_{{5,0}}\,,\;0\, \scalebox{1.3}{]}.
\end{align*}}
The basis functions associated to the other edges $\tau_{2},\tau_{3}$ are obtained by cyclic permutation.

$\bullet$ For the remaining boundary points, boundary edges and faces, we have the following $36\times 3$ basis functions 
{\small \begin{align*}
\scalebox{1.3}{[}N_{i,j}\,,\;0\,,\;0\, \scalebox{1.3}{]},
\scalebox{1.3}{[}0\,,\;N_{i,j}\,,\;0\, \scalebox{1.3}{]},
\scalebox{1.3}{[}0\,,\;0\,,\;N_{i,j}\, \scalebox{1.3}{]}, \quad \mathrm{for}\ \ 2\leq i,j\leq 7.
\end{align*}}
The dimension of the space $\cS^{1,1}_{4}(\cM)$ is $6+ 3\times (4+ 5 +36)= 141$.

A similar construction applies for an edge of a general mesh connecting an interior vertex $\gamma$ of any valency $\neq 4$ to another vertex $\gamma'$.
If $\gamma'$ is a crossing vertex, the numbers of basis functions attached to the vertices and the edge do not change.
If $\gamma'$ is not a crossing vertex, the number of basis functions attached to the non-crossing vertex $\gamma'$ becomes $5$ and there are $4$ basis functions attached to the edges.
In the case, where the edge connects two crossing vertices, there are $4$ basis functions attached to each crossing vertex and $8$ basis functions attached to the edge.

The glueing data used in this construction require a degree 4 for the separability.
For the mesh of Figure \ref{fig:roundcorner}, it is possible to use linear glueing data and bi-cubic b-spline patches. The dimension of bi-cubic $G^{1}$ splines with the linear glueing data is $72$. Depending on topology of the mesh, it is possible to construct and the choice of the glueing data, it is possible to use low degree b-spline patches for the construction of $G^{1}$ splines. In Figure \ref{fig:bicubic}, examples of $G^{1}$ bicubic spline surfaces are shown, for meshes with valencies at most $3, 4$ and $6$. The $G^{1}$ surface is obtained by least-square projection of a  $G^{0}$ spline onto the space of $G^{1}$ splines. 
\begin{figure}[ht]
  \begin{center}
    {\includegraphics[height=4cm,width=4cm]{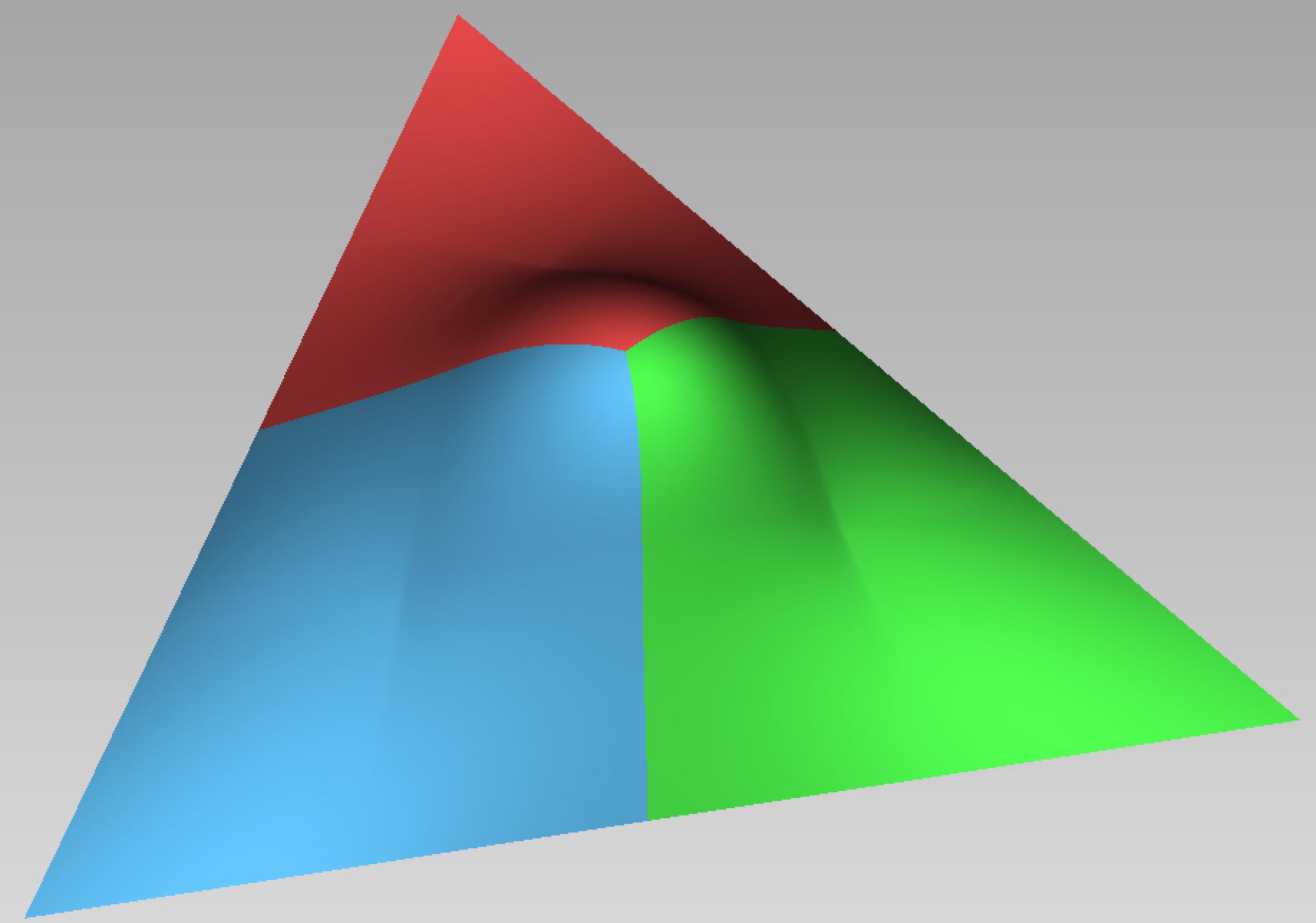}}
    {\includegraphics[height=4cm]{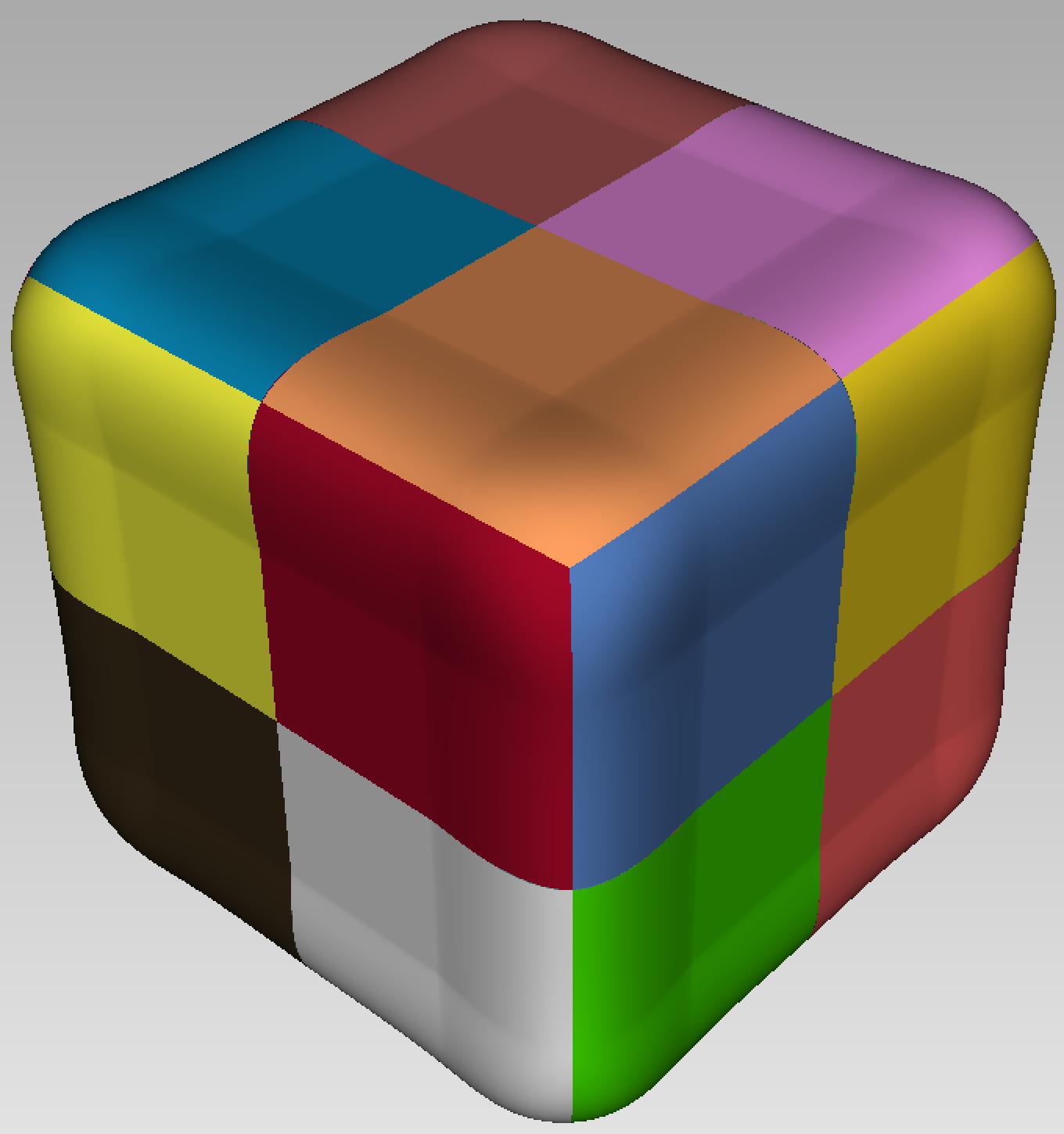}}
    {\includegraphics[height=4cm]{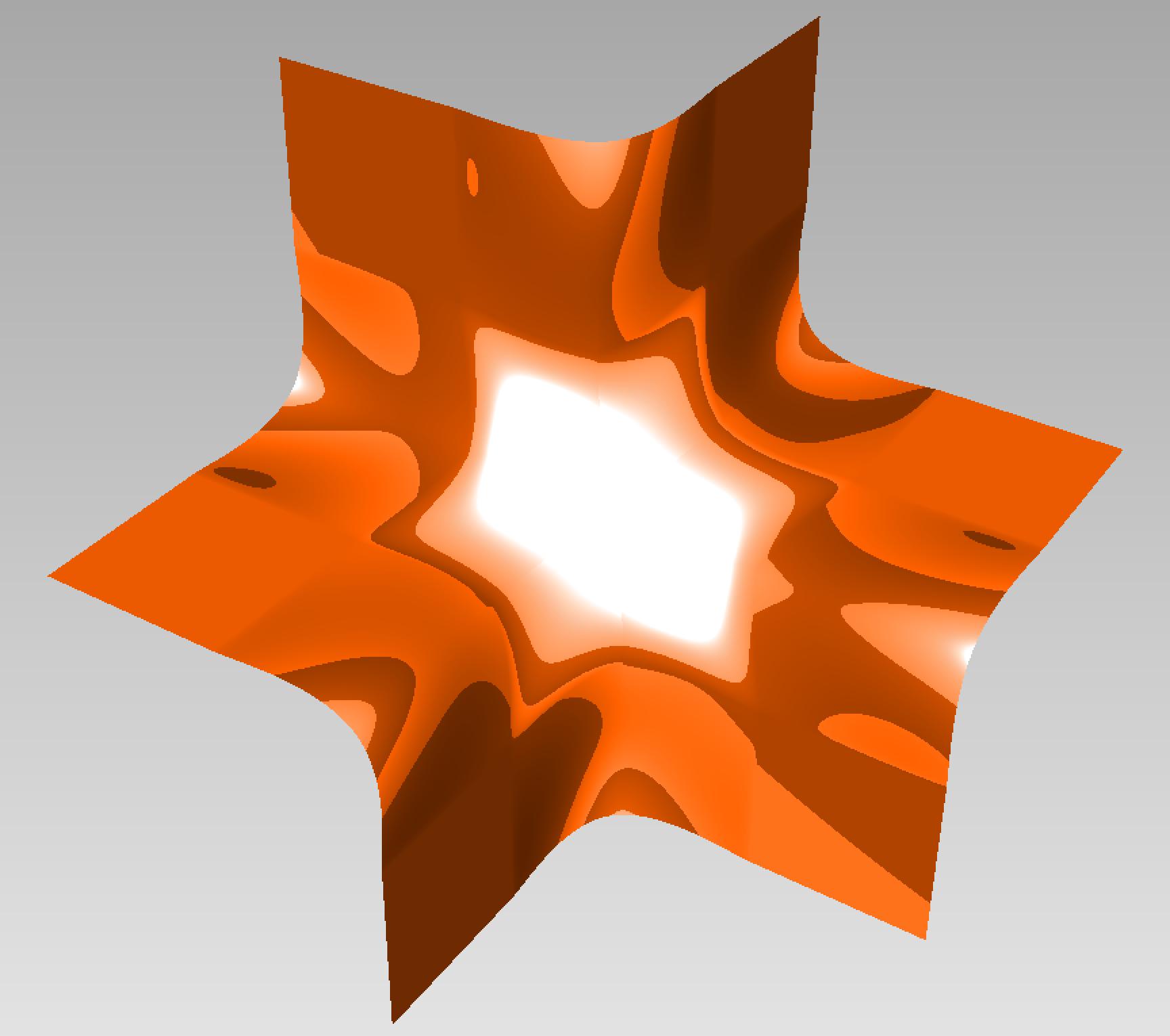}}
\end{center}
\caption{Examples of bi-cubic $G^{1}$ surfaces}\label{fig:bicubic}
\end{figure}

\section*{Concluding remarks}
We have studied the set of smooth b-spline functions defined on quadrilateral meshes of arbitrary topology, with 4-split macro-patch elements. Our study has focused on determining the dimension of the space of geometrically continuous $G^1$ splines of bounded degree. We have  provided a construction for the basis of the space  composed of tensor product b-spline functions. We have also illustrated our results with examples concerning parametric surface construction for simple topological surfaces.
Further extensions include the explicit construction of transition maps which ensure that the differentiability conditions are fulfilled, and the study of spline spaces with different macro-patch elements leading to a lower degree of the basis functions, the analysis of the numerical conditioning of the representation of the  $G^{1}$-splines in the chosen basis,  the use of these basis functions for approximation, in particular, in fitting problems and in iso-geometric analysis.

\paragraph{Acknowledgements:} The work is partial supported by the Marie Sklodowska-Curie  Innovative Training Network ARCADES (grant agreement No 675789) from the European Union's
Horizon 2020 research and innovation programme. 


\newcommand{\etalchar}[1]{$^{#1}$}

\end{document}